\theoremstyle{plain}
\newtheorem{theorem}{Theorem}[section]
\newtheorem{lemma}[theorem]{Lemma}
\newtheorem{proposition}[theorem]{Proposition}
\newtheorem{corollary}[theorem]{Corollary}
\newtheorem{question}[theorem]{Question}
\theoremstyle{definition}
\newtheorem{definition}[theorem]{Definition}
\newtheorem{remark}[theorem]{Remark}
\newtheorem{example}[theorem]{Example}
\newcommand{\Cee}{{\mathbb C}}
\newcommand{\Ree}{{\mathbb R}}
\newcommand{\Tee}{{\mathbb T}}
\newcommand{\En}{{\mathbb N}}
\newcommand{\Zee}{{\mathbb Z}}
\newcommand{\fA}{{\mathcal A}}
\newcommand{\fB}{{\mathcal B}}
\newcommand{\fC}{{\mathcal C}}
\newcommand{\fE}{{\mathcal E}}
\newcommand{\fF}{{\mathcal F}}
\newcommand{\fH}{{\mathcal H}}
\newcommand{\fI}{{\mathcal I}}
\newcommand{\fJ}{{\mathcal J}}
\newcommand{\fK}{{\mathcal K}}
\newcommand{\fL}{{\mathcal L}}
\newcommand{\fN}{{\mathcal N}}
\newcommand{\fP}{{\mathcal P}}
\newcommand{\fS}{{\mathcal S}}
\newcommand{\fT}{{\mathcal T}}
\newcommand{\fW}{{\mathcal W}}
\newcommand{\fX}{{\mathcal X}}
\newcommand{\alp}{\alpha}
\newcommand{\del}{\delta}
\newcommand{\eps}{\varepsilon}
\newcommand{\sig}{\sigma}
\newcommand{\vpi}{\varpi}
\newcommand{\norm}[1]{\left\|#1\right\|}
\newcommand{\wbar}[1]{\overline{#1}} 
\newcommand{\til}[1]{\tilde{#1}} 
\newcommand{\what}[1]{\widehat{#1}}
\newcommand{\id}{\mathrm{id}}
\newcommand{\idem}{\mathrm{E}}
\newcommand{\ze}{\mathrm{ZE}}
\newcommand{\edlg}{\mathrm{EdLG}} 
\newcommand{\Aff}{\mathrm{Aff}}
\newcommand{\spn}{\mathrm{span}}
\newcommand{\un}{\mathrm{U}}
\newcommand{\bl}{\mathrm{L}}
 \newcommand{\fal}{\mathrm{A}}
 \newcommand{\fsal}{\mathrm{B}}
 \newcommand{\ideal}{\mathrm{I}}
 \newcommand{\pd}{\mathrm{P}}
 \newcommand{\wstar}{\mathrm{W}^*}
 \newcommand{\zp}{\mathrm{ZProj}}
\begin{document}
 
 
\author{Nico Spronk}
\address{Department of Pure Mathematics, University of Waterloo, Waterloo, Ontario, N2L3G1,Canada}
\email{nspronk@uwaterloo.ca}

\title[Topologies, idempotents and ideals]{Weakly almost periodic
topologies, idempotents and ideals}

\begin{abstract}
Let $(G,\tau_G)$ be a topological group.  We establish relationships between
weakly almost periodic topologies on $G$ coarser than $\tau_G$, central idempotents
in the weakly almost periodic compactification $G^\fW$, and certain ideals in the 
algebra of weakly almost periodic functions $\fW(G)$.  We gain decompositions
of weakly almost periodic representations, generalizing many from the literature.
We look at the role of pre-locally compact topologies, unitarizable topologies, and
extend or decompositions to Fourier-Stieltjes algebras $\fsal(G)$.
\end{abstract}

\subjclass{Primary 43A60; Secondary 22A15, 47D03, 43A30, 43A35}

\thanks{This research was partially supported by an NSERC Discovery Grant.}


 \date{\today}
 
 \maketitle
 

\section{Introduction}

\subsection{Some classical decompositions}
Let $G$ be a topological group, $\fW(G)$ denote the C*-algebra of weakly almost periodic
functions, and $\fA\fP(G)$ the algebra of almost periodic functions.
The following is due to Eberlein \cite{eberlein} for locally compact abelian groups 
de Leeuw and Glicksberg \cite[5.11]{deleeuwg}, generally:
\begin{equation}\label{eq:meandecomp}
\fW(G)=\fA\fP(G)\oplus\fW_0(G)
\end{equation}
where $\fW_0(G)=\{u\in\fW(G):m(|u|)=0\}$ for the unique invariant mean $m$ on $\fW(G)$.
On a similar note, if $\pi:G\to\fB(\fX)$ is a weak operator continuous representation on a Banach space
which is weakly almost periodic (i.e.\ the weak operator closure, $\wbar{\pi(G)}^{wo}$, is weak operator compact)
then there is a decomposition
\begin{equation}\label{eq:repdecomp}
\fX=\fX^\pi_{ret}\oplus\fX^\pi_{wm}
\end{equation}
where
\begin{align*}
\fX^\pi_{ret}&=\left\{\xi\in\fX:\text{ for each }\eta\in\wbar{\pi(G)\xi}^w\text{ we have }\xi\in\wbar{\pi(G)\eta}^w\right\} \\
\fX^\pi_{wm}&=\left\{\xi\in\fX: 0\in\wbar{\pi(G)\xi}^w\right\}.
\end{align*}
We note that $\wbar{\pi(G)\xi}^w=\wbar{\pi(G)}^{wo}\xi$.  The elements of $\fX^\pi_{ret}$ are called {\it return
vectors}, while elements of $\fX^\pi_{wm}$ are called {\it weakly mixing vectors}.
See results of Bergelson and Rosenblatt \cite{bergelsonr}, Dye \cite{dye}, or 
Jacobs \cite{jacobs}; in the latter, elements of $\fX^\pi_{wm}$ are called ``flight vectors".

\subsection{Plan}
The goal of this paper is to devise a generalisation of (\ref{eq:meandecomp}) and 
(\ref{eq:repdecomp}).  The essential idea, is that we view these decompositions
as in terms of elements continuous with respect to the almost periodic, or Bohr, topology,
and view the complementary ideal/subspace as consisting of a space of elements which
are highly discontinuous with respect to this topology.  Hence we wish to find more general topologies
with respect to which we may obtain such decompositions.

Our first major theorem is Theorem \ref{theo:galois} which establishes the relationship
between weakly almost periodic topologies on $G$ and central idempotents 
in the weakly almost periodic compactification.  This theorem establishes a 
Galois connection between these two sets, and allows us to distinguish
a complete sublattice of ``maximally cocompact topologies", whose minimal elements
is the almost periodic topology.  These are used in the next major result, 
Theorem \ref{theo:main} to determine some special ideals of $\fW(G)$,
generalizing $\fW_0(G)$, which we call Eberlein-de Leeuw-Glicksberg (E-dL-G) ideals.
This discontinuous nature of elements of these E-dL-G ideals is indicated
in Lemma \ref{lem:bergelsonr}.  Finally, the decompositions of weakly almost
periodic representations in given in Theroem \ref{theo:contdecomp}.
In ensuing corollaries, we show how this naturally lends itself to various 
generalization of decompositions
in the literatures, including ones of de Leeuw and Glicksberg, Segal and von Neumann, and
of Lau and Losert.

In later sections we discuss pre-locally compact topologies and unitarizable topologies.
We answer a question left open in the investigation \cite{ilies,ilies1}
about the semi-lattice structure of maximally cocompact
pre-locally compact topologies, and give a topological
view of the spine algebra.  We give a version of our first main theorem, relating
unitarizable topologies to central idempotents in the universal unitary compactification
of $G$ (called the ``Eberlein compactification" in, for example, \cite{spronks}).
We redevelop the basic theory of the Fourier-Stieltjes algebra, for a unitarizable group.
We show that this algebra also admits E-dL-G ideals.  We close by indicating
a cross-section of examples from the literature, illustrating relationships between our objects.

\section{Weakly almost periodic topologies and central idempotents}

\subsection{Weakly almost periodic functions and semitopological semigroups}
Let $G=(G,\tau_G)$ be a topological group.  Hence we think of $\tau_G$
as the ``ambient topology".   We let $\fC_\beta(G)$ denote the C*-algebra
of continuous bounded complex functions on $G$ with uniform norm $\norm{\cdot}_\infty$.
For $s$ in $G$ and $f$ in $\fC_\beta(G)$ we let $u\cdot s(t)=u(st)$  for $t$ in $G$.  
We define the {\it almost periodic} and {\it weakly almost peroidc} functions on $G$ by
\begin{align*}
\fA\fP(G)&=\left\{u\in\fC_\beta(G):\wbar{u\cdot G}\text{ is compact in }\fC_\beta(G)\right\}\text{, and} \\
\fW(G)&=\left\{u\in\fC_\beta(G):\wbar{u\cdot G}^w\text{ is weakly compact in }\fC_\beta(G)\right\}.
\end{align*}
Each set is known to be a closed C*-subalgebra of $\fC_\beta(G)$, and may be 
equivalently defined as those $u$ for which $\wbar{G\cdot u}$ is closed, respectively,
$\wbar{G\cdot u}^w$ is weakly closed, where
$s\cdot u(t)=u(ts)$, for all $s$ and $t$ in $G$.  See the standard reference texts
\cite{burckel,ruppertB,berglundjm} for other facts in this paragraph.
We let for $\fA=\fA\fP$ or $\fW$,
$\eps^\fA:G\to\fA(G)^*$ be the evaluation map.  Then
\[
G^{\fA\fP}=\wbar{\eps^{\fA\fP}(G)}^{w^*},\;
G^\fW=\wbar{\eps^\fW(G)}^{w*}
\]
are the Gelfand spectrums of $\fA\fP(G)$, respectively, $\fW(G)$. Under the weak*-topology, 
$G^{\fA\fP}$ is a compact topological group, and $G^\fW$ is a compact semi-topological semigroup ---
i.e.\ maps $s\mapsto st,s\mapsto ts$ are continuous for each $t$ ---
each in a manner uniquely extending the group structure on $\eps^\fA(G)$.
We note the {\it universal property} of $(\eps^\fW,G^\fW)$.  If $(\del,S)$ is a {\it semi-topological
compactification} of $G$, i.e.\ $S$ is a semi-topological semigroup and $\del:G\to S$ a continuous
homomorphism with dense range, then there is a unique continuous map
\[
\del^\fW:G^\fW\to S\text{ with }\del^\fW\circ\eps^\fW=\del\text{ on }G
\]
which must hence be a surjective homomorphism.
This map is the adjoint of the inclusion $\fC(S)\circ\del\hookrightarrow\fW(G)$, restricted
to characters.
In particular, if $\eta:G\to H$ is a continuous homomorphism between topological groups
with dense range, then it induces a continuous surjective homomorphism
\begin{equation}\label{eq:tileta}
\eta^\fW:G^\fW\to  H^\fW
\end{equation}
which satisfies $\eta^\fW\circ\eps^\fW=\eps^\fW\circ\eta$ on $G$.

Let $G$ be a group and $\tau$ be a group topology on $G$.  A filter $\fF$ on $G$
is {\it $\tau$-Cauchy} if for any $\tau$-neighbourhood $U$ of $e$, there is $F_U$ in $\fF$
for which $s^{-1}t,st^{-1}\in U$ for $s,t$ in $F_U$.  Then we say that $G$ is {\it complete}
with respect to (the two-sided uniformity generated by) $\tau$, if any $\tau$-Cauchy filter converges.

Let $S$ denote any semigroup.  We denote the set of idempotents by $\idem(S)$.
If $e\in\idem(S)$ we let the {\it intrinsic group} at $e$ be given by
\[
S(e)=\{s\in eSe:e\in (seSe)\cap(eSes)\}.
\]

\begin{theorem}\label{theo:ruppert}
{\rm \cite[II.4.4,II.4.6]{ruppertB}}
If $S$ is  a semi-topological semigroup  
and $e\in\idem(S)$, then the group $S(e)$, with relativized topology,
is a complete topological group.
\end{theorem}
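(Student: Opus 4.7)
The plan is to verify in sequence the three assertions implicit in the statement: that $S(e)$ is algebraically a group, that the separately continuous product on it is in fact jointly continuous (with continuous inversion), and that the resulting topological group is complete in its two-sided uniformity.

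For the algebraic step, I would observe that $eSe$ is a subsemigroup of $S$ with $e$ as a two-sided identity, and that the defining condition $e\in(seSe)\cap(eSes)$ says precisely that $s\in eSe$ possesses a right inverse $s_r$ and a left inverse $s_\ell$ in $eSe$. Associativity then forces $s_\ell=s_\ell(ss_r)=(s_\ell s)s_r=s_r$, so the two inverses coincide and are unique, making $S(e)$ a group with identity $e$.

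For joint continuity, separate continuity of the product on $S(e)$ is immediate from the semi-topological structure of $S$. The heart of the proof is to upgrade this to joint continuity; this is the classical Ellis--Lawson--Namioka theme. I would first establish joint continuity of multiplication at $(e,e)$ by a Baire-category / equicontinuity argument on the separately continuous action of $S(e)$ on itself, then transport to arbitrary points via the separately continuous left and right translations available in the group. Continuity of inversion is handled analogously: prove it at $e$ by a closed-graph argument (the graph of inversion is closed in $S(e)\times S(e)$ by separate continuity of multiplication), then spread by translation. This step is the main obstacle, since passing from separate to joint continuity in a genuinely non-abelian semi-topological setting requires care in extracting the right Baire-category or compactness input from $S$.

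For completeness, given a two-sided Cauchy filter $\fF$ on $S(e)$, I would produce a cluster point $s^*$ of $\fF$ in $S$ by exploiting whatever compactness is available (in the paper's applications $S$ is compact, which makes this automatic), and then use separate continuity of $x\mapsto exe$ to locate $s^*$ in $eSe$. To see $s^*\in S(e)$, apply the same construction to the filter of inverses $\fF^{-1}$, which is also Cauchy by the two-sidedness of the uniformity, producing a candidate inverse $t^*\in eSe$; then verify $s^*t^*=e=t^*s^*$ by passing to the limit in relations of the form $st\in U$ for $U$ a neighbourhood of $e$ guaranteed by the Cauchy condition together with the already-established joint continuity. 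The coherence of the two limits along $\fF$ and $\fF^{-1}$ is the delicate point and is precisely what makes the two-sidedness of the uniformity essential; this is where the semi-topological structure, combined with the group-theoretic content of the previous step, really earns its keep.
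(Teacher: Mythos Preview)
The paper does not prove this theorem; it cites Ruppert [II.4.4, II.4.6] and adds only the remark that ``$S(e)$ is a topological group is a significant consequence of the celebrated joint continuity theorem due separately to Ellis and Lawson.''  Your three-step outline (algebraic group structure, Ellis--Lawson for joint continuity, Cauchy-filter argument for completeness) is exactly the shape of the argument in Ruppert's book and is fully consistent with the paper's one-line gloss, so there is nothing to compare against beyond that.

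One caution worth recording: both the Ellis--Lawson step and your completeness argument need $eSe$ (hence $S$) to be compact.  You flag this explicitly for completeness (``in the paper's applications $S$ is compact''), but you are vaguer in step~2; the Baire-category/equicontinuity input you invoke does not come for free in an arbitrary semitopological semigroup.  Ruppert's Chapter~II treats compact semitopological semigroups throughout, so the hypothesis is implicit in the citation even though the paper's statement omits it.  A second small point in your completeness step: you cannot appeal to joint continuity on $S(e)$ to conclude $s^*t^*=e$, since $s^*,t^*$ are a~priori only in the compact monoid $eSe$, not yet in $S(e)$.  The clean fix is to iterate \emph{separate} continuity in $S$ twice on the Cauchy relation $st^{-1}\in U$ (first send $t^{-1}\to t^*$ along $\fF^{-1}$, then $s\to s^*$ along $\fF$) to place $s^*t^*$ in every $\wbar{U}$, and then use Hausdorffness.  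With those two clarifications your plan is sound.
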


That $S(e)$ is a topological group is a significant consequence of the celebrated 
joint continuity theorem due separately to  Ellis and Lawson.  

\subsection{Weakly almost periodic topologies}
Given a set $X$ and any family of functions $\fF=\{f_i:X\to (Y_i,\tau_i)\}_{i\in I}$, where
each $(Y_i,\tau_i)$ is a topological space, we let $\sig(X,\fF)$ denote the {\it initial topology}
on $X$ generated by $\fF$.  

Let $G=(G,\tau_G)$ be a topological group.  

\begin{definition}
We denote the family of {\it weakly almost periodic
topologies} on $G$ by
\[
\fT(G)=\{\tau\subseteq\tau_G:\tau\text{ is a group topology, with }\tau=\sig(G,\fW^\tau(G))\}
\]
where $\fW^\tau(G)=\{u\in\fW(G):u\text{ is }\tau\text{-continuous}\}$. 
\end{definition}

Let $G_d$ denote $G$ qua discrete group.  Then it follows form the Hahn-Banach theorem that
$\fT(G)=\{\tau\in\fT(G_d):\tau\subseteq\tau_G\}$.  We make no assumption
that each $\tau$ in $\fT(G)$ is Hausdorff, the value of which should be apparent from
the next example.  

\begin{example} \label{ex:ap}  The
{\it almost periodic}, $\tau_{ap}=\sig(G,\fA\fP(G))$, is frequently not Hausdorff.
\end{example}

We shall henceforth assume that the following holds, unless indicated otherwise.

\begin{definition}  We say that $G=(G,\tau_G)$ is {\it weakly almost periodic} provided that
$\tau_G$ is Hausdorff and $\tau_G\in\fT(G)$, i.e.\ $\tau_G=\sig(G,\fW(G))$.
\end{definition}

Hence $G$ being weakly almost periodic entails that the map $\eps^\fW:G\to G^\fW$ is a 
homeomorphism onto its range.  A locally compact group
is always weakly almost periodic, since $\fC_0(G)\subseteq\fW(G)$.
We shall give examples of weakly almost periodic, and non-weakly almost periodic groups
in \ref{sec:comparison}.

Given $\tau$ in $\fT(G)$, we let $\eps^{\fW^\tau}:G\to\fW^\tau(G)^*$ be the evaluation
mapping, and $G^{\fW^\tau}=\wbar{\eps^{\fW^\tau}(G)}^{w*}$,  which is
naturally isomorphic to the weakly almost periodic
compactification of $(G,\tau)$.  Let 
\[
G_\tau=G^{\fW^\tau}(\eps^{\fW^\tau}(e_G))\text{ and }\eta_\tau:G\to G_\tau,\;
\eta_\tau(s)=\eps^{\fW^\tau}(s).
\]
In light of Theorem \ref{theo:ruppert}, 
the pair $(\eta_\tau,G_\tau)$ will be referred to as the {\it $\tau$-completion} of $G$.
More precisely, the group $G_\tau$ is the (unique, up to topological isomorphism) completion of
$G/\wbar{\{e_G\}}^\tau$ with respect to the two-sided uniformity from the induced topology.
Our assumption that $G$ is weakly almost periodic entails that
$G_{\tau_G}=G^\fW(\eps^\fW(e_G))$ is the completion of $G$.

By construction, each completion $G_\tau$ is a weakly almost periodic group.
In particular we have
\begin{equation}\label{eq:WGtau}
\fW(G_\tau)\circ \eta_\tau=\fW^\tau(G).
\end{equation}
Also, we let $\eps^\fW_\tau:G_\tau\to\fW(G_\tau)^*$ denote the evaluation map.

Notice that $\ker\eta_\tau=\wbar{\{e_G\}}^\tau$.
It follows that if $\tau_1,\tau_2\in\fT(G)$, then $\tau_1\subseteq\tau_2$
if and only if $\ker\eta_{\tau_1}\supseteq\ker\eta_{\tau_2}$ and $\ker\eta_{\tau_1}$
is $\tau_2$-closed.  Hence there is a unique continuous homomorphism
\[
\eta^{\tau_2}_{\tau_1}:G_{\tau_2}\to G_{\tau_1}
\text{ for which }\eta^{\tau_2}_{\tau_1}\circ\eta_{\tau_2}=\eta_{\tau_1}.
\]

\begin{definition}
If $\tau_1,\tau_2\in\fT(G)$, we say that $\tau_1$ is {\it co-compactly contained}
in $\tau_2$, written $\tau_1\subseteq_c\tau_2$, provided 

$\bullet$ $\tau_1\subseteq\tau_2$,

$\bullet$ $\ker\eta^{\tau_2}_{\tau_1}$ is compact, and 

$\bullet$ $\eta^{\tau_2}_{\tau_1}:G_{\tau_2}\to G_{\tau_1}$ is open.
\end{definition}

In particular, if $\tau_1\subseteq_c\tau_2$, then $G_{\tau_1}
\cong G_{\tau_2}/\ker\eta^{\tau_2}_{\tau_1}$, homeomorphically.
In the terminology of \cite{ilies1}, if $\tau_1\subseteq_c\tau_2$,
say that $\tau_1$ is a {\it quotient topology} of $\tau_1$.
In an amusing play on words, the relation of co-compact containment is also called ``co-Cauchy
containment", in \cite{ruppert}.  In fact, the relation there is slightly more general than is presented 
here.  This terminology is motivated by the following, which (for abelian groups) is proved in the same article.
We include our own proof for completeness of presentation.

\begin{lemma}\label{lem:cocompact}
Let $\tau_1\subseteq\tau_2$ in $\fT(G)$.  Then $\tau_1\subseteq_c\tau_2$ if and only if
every $\tau_1$-Cauchy filter on $G$ admits a $\tau_2$-Cauchy refinement.
\end{lemma}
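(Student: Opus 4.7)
My plan is to treat the two directions separately, pivoting in both on the continuous homomorphism $\eta^{\tau_2}_{\tau_1}\colon G_{\tau_2}\to G_{\tau_1}$ of complete topological groups (Theorem \ref{theo:ruppert}); write $K=\ker\eta^{\tau_2}_{\tau_1}$.

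For the forward direction, assume $\tau_1\subseteq_c\tau_2$, so $K$ is compact and $\eta^{\tau_2}_{\tau_1}$ is open and surjective. Given a $\tau_1$-Cauchy filter $\fF$ on $G$, its push-forward converges in $G_{\tau_1}$ to some $x$ by completeness; pick $\tilde{x}\in G_{\tau_2}$ above $x$. Openness implies that as $W$ ranges over $\tau_2$-neighbourhoods of $e$, the sets $\tilde{x}WK$ have $\eta^{\tau_2}_{\tau_1}$-images forming a $\tau_1$-neighbourhood base at $x$, so each $\tilde{x}WK$ contains $\eta_{\tau_2}(F)$ for some $F\in\fF$. The family $\{\wbar{\eta_{\tau_2}(F)}^{\tau_2}\cap\tilde{x}K:F\in\fF\}$ then has the finite intersection property inside the compact set $\tilde{x}K$, producing a common point $\tilde{x}k$. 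Adjoining $\{\eta_{\tau_2}^{-1}(\tilde{x}kW):W\text{ a }\tau_2\text{-neighbourhood of }e\}$ to $\fF$ yields the desired $\tau_2$-Cauchy refinement, with the Cauchy condition verified by choosing $W$ symmetric and small enough that $W^{-1}W$ and $(\tilde{x}k)(WW^{-1})(\tilde{x}k)^{-1}$ both sit inside a prescribed $\tau_2$-neighbourhood of $e$.

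For the converse I first extract compactness of $K$. Given an ultrafilter $\fU$ on $K$, the filter on $G$ generated by $\{\eta_{\tau_2}^{-1}(AW):A\in\fU,\ W\text{ a }\tau_2\text{-neighbourhood of }e\}$ is $\tau_1$-Cauchy, since $A\subseteq K$ forces $\eta^{\tau_2}_{\tau_1}(AW)=\eta^{\tau_2}_{\tau_1}(W)$, which shrinks with $W$ by continuity. The hypothesis supplies a $\tau_2$-Cauchy refinement converging in $G_{\tau_2}$ to some $\tilde{x}\in\bigcap_A\wbar{A}^{\tau_2}\subseteq K$; as $\tilde{x}$ is then a cluster point of $\fU$, and $\fU$ is ultra, $\fU\to\tilde{x}$, so $K$ is compact.

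Openness of $\eta^{\tau_2}_{\tau_1}$ (equivalently, by translation, openness at $e$) is the hard step. Given a $\tau_2$-open $U\ni e_{G_{\tau_2}}$, choose open $U_0$ with $\wbar{U_0}^{\tau_2}\subseteq U$ using regularity. Compactness of $K$ makes $\wbar{U_0}^{\tau_2}K=\wbar{U_0K}^{\tau_2}$ closed, so $F_0:=G_{\tau_2}\setminus\wbar{U_0}^{\tau_2}K$ is open; moreover $U_0K$ is an open neighbourhood of $K$ disjoint from $F_0$ and hence from $\wbar{F_0}^{\tau_2}$. It suffices to show $e\notin\wbar{\eta^{\tau_2}_{\tau_1}(F_0)}^{\tau_1}$, for then a $\tau_1$-neighbourhood $V$ of $e$ satisfies $(\eta^{\tau_2}_{\tau_1})^{-1}(V)\subseteq\wbar{U_0}^{\tau_2}K\subseteq UK$, giving $V\subseteq\eta^{\tau_2}_{\tau_1}(U)$. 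If this failed, openness of $F_0$ combined with density of $\eta_{\tau_2}(G)$ would supply $s_V\in G$ with $\eta_{\tau_2}(s_V)\in F_0$ and $\eta_{\tau_1}(s_V)$ arbitrarily close to $e$; the corresponding tail filter is $\tau_1$-Cauchy, so by hypothesis has a $\tau_2$-Cauchy refinement converging to $\tilde{x}\in G_{\tau_2}$ with $\tilde{x}\in K$ (from the $\tau_1$-limit) and $\tilde{x}\in\wbar{F_0}^{\tau_2}$ (since $\eta_{\tau_2}^{-1}(F_0)$ lies in the refinement), contradicting $K\cap\wbar{F_0}^{\tau_2}=\emptyset$. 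The main obstacle is precisely the balancing act for $F_0$: it must be open (so that density applies), cleanly separated from $K$ (so the final contradiction goes through), and yet satisfy $G_{\tau_2}\setminus F_0\subseteq UK$ (so the stated conclusion follows). Replacing $U$ by $\wbar{U_0}^{\tau_2}$ in the definition of $F_0$ and exploiting the compactness of $K$ is exactly what makes these three demands compatible.
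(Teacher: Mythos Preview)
Your forward direction and the compactness of $K$ are handled essentially as in the paper (you use ultrafilters where the paper uses arbitrary filters, a cosmetic change).  Your openness argument, however, is organised differently: the paper first establishes that $\eta^{\tau_2}_{\tau_1}$ is surjective (by applying the Cauchy-refinement hypothesis to the pulled-back neighbourhood filter of an arbitrary $s_1\in G_{\tau_1}$) and then checks that the induced bijection $\theta\colon G_{\tau_2}/K\to G_{\tau_1}$ has continuous inverse, whereas you argue directly by contradiction that $e\notin\wbar{\eta^{\tau_2}_{\tau_1}(F_0)}^{\tau_1}$ via the neat separation $K\cap\wbar{F_0}^{\tau_2}=\varnothing$.

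There is one genuine gap in your openness step.  From $(\eta^{\tau_2}_{\tau_1})^{-1}(V)\subseteq UK$ you conclude $V\subseteq\eta^{\tau_2}_{\tau_1}(U)$, but applying $\eta^{\tau_2}_{\tau_1}$ only yields $V\cap\mathrm{im}(\eta^{\tau_2}_{\tau_1})\subseteq\eta^{\tau_2}_{\tau_1}(U)$; the inclusion you state requires surjectivity of $\eta^{\tau_2}_{\tau_1}$, which you have not addressed.  The fix is short: either prove surjectivity first exactly as the paper does (for $x\in G_{\tau_1}$ the filter $\langle\eta_{\tau_1}^{-1}(\text{nbhds of }x)\rangle$ on $G$ is $\tau_1$-Cauchy, so admits a $\tau_2$-Cauchy refinement whose $G_{\tau_2}$-limit maps to $x$), or observe that your argument already shows $\eta^{\tau_2}_{\tau_1}$ is open onto its image, whence that image is topologically $G_{\tau_2}/K$, which is complete (quotient of a complete group by a compact normal subgroup), hence closed in $G_{\tau_1}$, hence --- being dense --- all of $G_{\tau_1}$.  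With this patch your argument is complete, and your separation approach to openness is a pleasant alternative to the paper's filter-continuity check.
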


\begin{proof}
Given a filter base $\fE$ on a set $X$, we let $\langle\fE\rangle$ denote the filter on $X$ generated 
by $\fE$.  If $\eta:X\to Y$, $\eta(\fE)=\{\eta(E):E\in\fE\}$; and if $\eta:Y\to X$ then
$\eta^{-1}(\fE)=\{\eta^{-1}(E):E\in\fE\}$.  For $j=1,2$, a filter $\fF$
if $\tau_j$-Cauchy, if and only $\langle \eta_{\tau_j}(\fF)\rangle$ 
converges to a point $s_j$ in the complete group $G_{\tau_j}$, if and only if $\fF$ contains
\[
(\tau_j)_{s_j}=\{\eta_{\tau_j}^{-1}(U):U\text{ neighbourhood of }s_j\text{ in }G_{\tau_j}\}.
\]

($\Rightarrow$)  Let $\fF$ be a $\tau_1$-Cauchy filter on $G$, and
let $\fF_j=\langle \eta_{\tau_j}(\fF)\rangle$ for $j=1,2$.  
Then $\fF_2$ contains $(\eta^{\tau_2}_{\tau_1})^{-1}(\eta_{\tau_1}((\tau_1)_{s_1}))$
for some $s_1$ in $G_{\tau_1}$, hence each element of $\fF_2$ has non-empty
intersection with the compact coset $C=(\eta^{\tau_2}_{\tau_1})^{-1}(\{s_1\})$.
Hence $\fF_2$ admits a cluster point $s_2$ in $C$, and $\langle \fF\cap(\tau_2)_{s_2}\rangle$
is a $\tau_2$-Cauchy refinement of $\fF$.

($\Leftarrow$)  Let us first see that $K=\ker\eta^{\tau_2}_{\tau_1}$ is compact.
Let $\fF_2$ be a filter on $K$, for which $\fF=\langle \eta_{\tau_2}^{-1}(\fF_2)\rangle\supseteq
(\tau_1)_{e_1}$, where $e_1$ is the identity of $G_{\tau_1}$, and hence is $\tau_1$-Cauchy.
But then $\fF$ admits a $\tau_2$-Cauchy refinement $\fF'$, so
$\fF'\supseteq(\tau_2)_{s_2}$ for some $s_2$ in $G_{\tau_2}$, which is necessarily
an element of the closed subgroup $K$.  

We now show that $\eta^{\tau_2}_{\tau_1}$ is open.
If $s_1\in G_{\tau_1}$, $\fF=\langle(\tau_1)_{s_1}\rangle$ admits a $\tau_2$-Cauchy refinement
$\fF'$, hence containing $ \langle(\tau_2)_{s_2}\rangle$ for some $s_2$ in $G_{\tau_2}$.
It is clear that $\eta^{\tau_2}_{\tau_1}(s_2)=s_1$, and hence $\eta^{\tau_2}_{\tau_1}:
G_{\tau_2}\to G_{\tau_1}$ is surjective, and induces a continuous group isomorphism
$\theta:G_{\tau_2}/K\to G_{\tau_1}$.  If, now, $\fF_1'$ is any filter in $G_{\tau_1}$
converging to $s_1$, then let $\fF'$ be a $\tau_2$-Cauchy refinement of 
$\langle\eta_{\tau_1}(\fF_1')\rangle$, so $\langle\eta_{\tau_2}(\fF')\rangle$ converges
to $s_2$, and $\theta(s_2K)=\eta^{\tau_2}_{\tau_1}(s_2)=s_1$. 
Hence $s_2K$ is a the unique cluster point of the filter $\theta^{-1}(\fF_1)$, 
thus the limit point.  This means that $\theta^{-1}$ is continuous, whence $\theta$ is open.
\end{proof}

\subsection{Central idempotents and the Galois connection}
Let $G=(G,\tau_G)$ be a weakly almost periodic group.  
The following result is crucial to our ensuing results.  
For the benefit of non-specialists,
we review its proof, indicating aspects from
the books \cite{burckel,ruppertB,berglundjm}.

\begin{theorem}\label{theo:ruppert1}
{\rm \cite[II.4.13 (iv)]{ruppertB}}  Any closed subsemigroup $S$ of $G^\fW$
admits a unique minimal idempotent, and a unique minimal ideal
which is a compact topological group.  
\end{theorem}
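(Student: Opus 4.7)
The approach is to apply the classical structure theory of compact semi-topological semigroups to $S$. First note that since $S$ is closed in $G^\fW$ and stable under the separately continuous multiplication, $S$ is itself a compact semi-topological semigroup in the relative topology, so the general machinery applies.

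I would begin by extracting a minimal closed left ideal. The family of nonempty closed left ideals of $S$ (sets $I \subseteq S$ with $SI \subseteq I$), ordered by reverse inclusion, is nonempty (it contains $S$), and each chain has an upper bound via its intersection (nonempty by the finite intersection property in compact $S$, and plainly a closed left ideal). Zorn's lemma then yields a minimal element $L$. For any $s \in L$, the closed left ideal $Ss = \{ts : t \in S\}$ (closed as the continuous image of compact $S$ under right translation by $s$) lies in $L$, so $Ss = L$ by minimality. An Ellis--Numakura-style argument, valid in any compact semi-topological semigroup, produces an idempotent $e \in L$, and hence $L = Se$.

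Next I would verify that $eSe = S(e)$, i.e.\ that every element of the monoid $eSe$ has an inverse. Fix $t \in eSe$, so $t = ete$ and in particular $et = te = t$. The set $Ste$ is a closed left ideal contained in $Se$ and containing $t$, so by minimality $Ste = Se$; this furnishes $s_0 \in S$ with $s_0 t e = e$, and setting $u = es_0 e \in eSe$ a short computation using $et = te = t$ gives $ut = e$. Since every element of $eSe$ has a left inverse in the monoid $eSe$ with identity $e$, this monoid is a group, so $eSe = S(e)$. By Theorem~\ref{theo:ruppert}, $S(e)$ is a complete topological group, and it is compact as a closed subset of $S$.

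The main obstacle is the uniqueness assertion: that $S$ contains only one minimal idempotent, or equivalently that the minimal two-sided ideal of $S$ coincides with the single minimal left ideal $L$. For arbitrary compact semi-topological semigroups this can fail; the kernel is generically a Rees-matrix completely simple semigroup indexed by several minimal left and right ideals. The special feature here is that $G^\fW$ is the weakly almost periodic compactification of a topological \emph{group}: the dense image $\eps^\fW(G)$ consists of two-sided invertible elements, on which left and right translation of $G^\fW$ are homeomorphisms. Exploiting this abundance of units, as carried out in \cite[II.4.13]{ruppertB}, one shows that any two minimal idempotents of a closed subsemigroup of $G^\fW$ must coincide. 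Granting this, the minimal ideal of $S$ is exactly $L = Se = eSe$, which is the compact topological group $S(e)$ with identity $e$, and $e$ is the unique minimal idempotent of $S$.
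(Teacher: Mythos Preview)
Your treatment of the existence of a minimal left ideal, the production of an idempotent, and the verification that $eSe$ is a group is correct and more explicit than the paper's, which simply cites the general structure theory of compact semi-topological semigroups for these steps.  However, the entire weight of the theorem lies in the uniqueness of the minimal idempotent, and here you do not give an argument: you note that the dense set $\eps^\fW(G)$ of two-sided units is the relevant special feature, cite \cite[II.4.13]{ruppertB}, and proceed by ``granting this''.  Since the paper includes this proof precisely to exhibit the mechanism, that is a genuine gap.

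The missing idea is not just the abundance of units but a specific use of the Ellis--Lawson joint continuity theorem.  The paper takes the minimal idempotent $e$, writes $e=\lim_i t_i$ with $t_i\in\eps^\fW(G)$, sets $s_i=t_i^{-1}$ and passes to a subnet so that $s=\lim_i s_i$ exists in $G^\fW$.  Joint continuity in the form
\[
\lim_i(s_i,t_ix)=(s,x)\quad\Rightarrow\quad \lim_i s_it_ix=sx
\]
is then applied with $x$ chosen among $e$ and $e'e$ to force any competing idempotent $e'$ (in $\idem(Se)$, and symmetrically in $\idem(eS)$) to equal $e$; the point is that $s_it_i$ is the identity of $G^\fW$, so $s_it_ix=x$, while the separate limits collapse the expression to $se$.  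Once $\idem(Se)=\{e\}=\idem(eS)$, the Rees structure of $K(S)$ degenerates to the single group $eSe$, which is then the unique minimal ideal.  Your outline would be complete if you inserted this joint-continuity computation in place of the citation; without it, the uniqueness claim---and hence the identification $K(S)=Se=eSe$ in your final paragraph---is unsupported.
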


\begin{proof}
The semigroup $S$, being compact and semi-topological, admits both a minimal idempotent
$e$ and a unique minimal ideal $K(S)$ which must contain $e$
(\cite[Theo.\ 2.2]{burckel},\cite[I.2.1, I.2.6 \& I.3.5]{ruppertB} or
\cite[3.11]{berglundjm}).  Then $\idem(Se)$ is a right-zero semigroup, in particular $ee'=e'$ for
$e'$ in $\idem(Se)$; and $\idem(eS)$ is a left-zero semigroup.  It is a consequence of a 
structure theorem given in \cite[I.3.8]{ruppertB} or \cite[2.16 (ii)]{berglundjm}, that if
$\idem(Se)=\{e\}=\idem(eS)$, then $K(S)=eSe$ is a closed subgroup of $S$, and
hence admits only the idempotent $e$.

We shall find it essential to use the following form of the Ellis-Lawson
joint continuity result, as given in \cite[II.4.11]{ruppertB},
but with roles of left and right reversed and
applied to the compact left ideal $G^\fW x$ for some $x$ in $G^\fW$.  For nets
$(s_i),(t_i)$ in $G^\fW$ we have that 
\[
\lim_i(s_i,t_ix)=(s,x)\text{ in }G^\fW\times G^\fW \quad\Rightarrow\quad sx=\lim_i s_it_ix\text{ in }G^\fW.
\]

We shall prove that $\idem(Se)=\{e\}$, the argument for $\idem(eS)$ being symmetric.
 Fix a net $(t_i)\subset \eps^\fW(G)$ with $e=\lim_i t_i$.  We follow the argument of \cite[5.9]{berglundjm}.
Let $s_i=t_i^{-1}$, and, by passing to subnet, we may suppose that $s=\lim_is_i$ exists in $G^\fW$.
We now assume that $e'\in\idem(Se)$.  We then have that $\lim_i t_ie'=ee'=e$ so
\begin{align*}
e'&=e'e=\lim_i s_it_ie'e=\left(\lim_i s_i\right)\left(\lim_i t_ie'e\right) \\
&=se=\left(\lim_i s_i\right)\left(\lim_it_ie\right)=\lim_i s_it_ie=e.
\end{align*}

It is the classical application of the joint continuity theorem that $K(S)$ is a topological group.
\end{proof}

We consider the set of central idempotents:
\[
\ze(G^\fW)=\{e\in G^\fW:e^2=e\text{ and }es=se\text{ for all }s\text{ in }G^\fW\}.
\]
Then $\ze(G^\fW)$ is evidently a subsemigroup, in fact a subsemilattice of $G^\fW$.
We let $e_1\leq e_2$ in $\ze(G^\fW)$ provided $e_1e_2=e_1$.

The following result is shown in \cite{ruppert} for locally compact abelian groups.
Curiously, the main extra ingredient in the proof of our general result
is Theorem \ref{theo:ruppert1}.   Though we could borrow much
of the proof offered  in \cite{ruppert}, we give a complete proof for completeness of presentation
and further because we wish to extract more information, as will be demonstrated
in the corollaries.

\begin{theorem}\label{theo:galois}
There exist two maps
\[
T:\ze(G^\fW)\to\fT(G)\text{ and }E:\fT(G)\to \ze(G^\fW)
\]
which satisfy the relations
\begin{align*}
&T(e_1)\subseteq T(e_2)\text{ if }e_1\leq e_2\text{ in }\ze(G^\fW); \\
&E(\tau_1)\leq E(\tau_2)\text{ if }\tau_1\subseteq\tau_2\text{ in }\fT(G); \\
&E(\tau_1)=E(\tau_2)\text{ if }\tau_1\subseteq_c\tau_2\text{ in }\fT(G)\text{; and} \\
&E\circ T=\id_{\ze(G^\fW)}\text{ while }\tau\subseteq_c T\circ E(\tau).
\end{align*}
Hence if we let $\wbar{\fT}(G)=T(\ze(G^\fW))$, then $E:\wbar{\fT}(G)\to\ze(G^\fW)$ is an order-preserving
bijection with inverse $T$.  
\end{theorem}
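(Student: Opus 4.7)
The plan is to construct $T$ and $E$ explicitly from canonical data in $G^\fW$, and verify the four listed properties using Theorems \ref{theo:ruppert} and \ref{theo:ruppert1}. For $E$: given $\tau\in\fT(G)$, the inclusion $\fW^\tau(G)\hookrightarrow\fW(G)$ dualizes to a continuous surjective semigroup homomorphism $\pi_\tau:G^\fW\twoheadrightarrow G^{\fW^\tau}$ with $\pi_\tau\circ\eps^\fW=\eps^{\fW^\tau}$. Set $M_\tau:=\pi_\tau^{-1}(\eps^{\fW^\tau}(e_G))$, a closed subsemigroup of $G^\fW$ since $\eps^{\fW^\tau}(e_G)$ is the identity idempotent of $G^{\fW^\tau}$. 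Theorem \ref{theo:ruppert1} then yields a unique minimal idempotent $E(\tau)\in M_\tau$. Centrality of $E(\tau)$ follows from $\eps^\fW(s)M_\tau\eps^\fW(s)^{-1}=M_\tau$ for each $s\in G$ (conjugation by a group element fixes the identity in the quotient, and so preserves the fiber), whence uniqueness gives $\eps^\fW(s)E(\tau)=E(\tau)\eps^\fW(s)$; density of $\eps^\fW(G)$ in $G^\fW$ and separate continuity of multiplication then extend this throughout $G^\fW$. For $T$: given $e\in\ze(G^\fW)$, the closed subsemigroup $eG^\fW=G^\fW e$ has identity $e$, and its intrinsic group $G^\fW(e)$ is a complete topological group by Theorem \ref{theo:ruppert}. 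The map $\phi_e:G\to G^\fW(e)$, $\phi_e(s):=e\eps^\fW(s)$, is a continuous group homomorphism with $\phi_e(s)^{-1}=e\eps^\fW(s^{-1})$, and I define $T(e)$ to be the topology generated on $G$ by the translation-invariant unital C*-subalgebra $\{u\circ\phi_e:u\in\fC(eG^\fW)\}$ of $\fW(G)$; a Stone--Weierstrass identification of $eG^\fW$ with the WAP compactification of $(G,T(e))$ confirms $T(e)\in\fT(G)$.

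Order preservation is direct. If $e_1\leq e_2$, then $\phi_{e_1}(s)=e_1\phi_{e_2}(s)$ factors $\phi_{e_1}$ through $\phi_{e_2}$ via the continuous map $x\mapsto e_1x$, giving $T(e_1)\subseteq T(e_2)$. If $\tau_1\subseteq\tau_2$, the factorization $\pi_{\tau_1}=\rho\circ\pi_{\tau_2}$ through a continuous surjection $\rho:G^{\fW^{\tau_2}}\twoheadrightarrow G^{\fW^{\tau_1}}$ yields $M_{\tau_2}\subseteq M_{\tau_1}$; thus $E(\tau_2)$ is an idempotent in $M_{\tau_1}$ and minimality of $E(\tau_1)$ forces $E(\tau_1)E(\tau_2)=E(\tau_1)$. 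For $E(\tau_1)=E(\tau_2)$ under $\tau_1\subseteq_c\tau_2$: compactness of $\ker\eta^{\tau_2}_{\tau_1}$ combined with Lemma \ref{lem:cocompact} forces $E(\tau_1)\in M_{\tau_2}$, so the two minimal idempotents coincide. For $E\circ T=\id_{\ze(G^\fW)}$: with $\tau:=T(e)$ and the identification $G^{\fW^\tau}\cong eG^\fW$, $M_\tau=\{x\in G^\fW:ex=e\}$ has minimal idempotent visibly $e$. For $\tau\subseteq_c T\circ E(\tau)$: with $e:=E(\tau)$ and $\tau':=T(e)$, every $u\in\fW^\tau(G)$ satisfies $\hat u(ex)=\hat u(x)$ on $G^\fW$ (since $\pi_\tau(e)$ is the identity of $G^{\fW^\tau}$, so $\pi_\tau(ex)=\pi_\tau(x)$), hence $u$ factors through $eG^\fW$, giving $\tau\subseteq\tau'$; identifying the kernel of $\eta^{\tau'}_\tau:G^\fW(e)\to G_\tau$ with the compact intersection $G^\fW(e)\cap M_\tau$ then yields co-compactness via Lemma \ref{lem:cocompact}.

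The principal obstacle is the Stone--Weierstrass-style identification $G^{\fW^{T(e)}}\cong eG^\fW$: one must verify that the pullback subalgebra $\{u\circ\phi_e:u\in\fC(eG^\fW)\}$ captures \emph{all} weakly almost periodic functions on $(G,T(e))$, not merely generates the topology. This reconciliation between the WAP-compactification functor and the concrete algebra--topology construction, together with the parallel compact-kernel arguments under $\subseteq_c$, is where the bulk of the technical work will lie.
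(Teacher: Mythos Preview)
Your constructions of $T$ and $E$ and the monotonicity arguments coincide with the paper's; the overall architecture is the same. However, two of your hand-waved steps conceal the real work, and the step you flag as the ``principal obstacle'' is actually less central than you think.

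For $E(\tau_1)=E(\tau_2)$ under $\tau_1\subseteq_c\tau_2$: saying ``compactness of $\ker\eta^{\tau_2}_{\tau_1}$ combined with Lemma~\ref{lem:cocompact} forces $E(\tau_1)\in M_{\tau_2}$'' is not an argument---Lemma~\ref{lem:cocompact} is a Cauchy-filter characterization, not a statement about idempotents. What you actually need is that the fiber $\rho^{-1}(\eps^{\fW^{\tau_1}}(e_G))$ inside $G^{\fW^{\tau_2}}$ is precisely the compact group $\eps^{\fW^{\tau_2}}(K)$, where $K=\ker\eta^{\tau_2}_{\tau_1}$; since a compact group has a unique idempotent, the idempotent $\pi_{\tau_2}(E(\tau_1))$ is forced to equal $\eps^{\fW^{\tau_2}}(e_G)$. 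The paper establishes this fiber identification via a Haar-averaging description of $\fW(G_{\tau_1})$ inside $\fW(G_{\tau_2})$.

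For $\tau\subseteq_c T(E(\tau))$: identifying $\ker\eta^{\tau'}_\tau$ with the compact set $G^\fW(e)\cap M_\tau$ gives only half of co-compactness; you also need openness of $\eta^{\tau'}_\tau$. Invoking Lemma~\ref{lem:cocompact} means \emph{verifying} the Cauchy-filter condition, not deducing it from kernel compactness. The paper supplies a genuine filter argument: given a $\tau$-Cauchy filter $\fF$ on $G$, one takes a cluster point $s$ of its image in $E(\tau)G^\fW$ and then, via a two-sided inverse trick using the minimal ideal $K_\tau$, shows $s$ lies in the intrinsic group $G^\fW(E(\tau))$, producing the required $T(E(\tau))$-Cauchy refinement. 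This is the most delicate step and your sketch does not address it.

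Conversely, the compactification-level identification $G^{\fW^{T(e)}}\cong eG^\fW$ that you flag as the principal obstacle is \emph{not needed} for the theorem. The paper only uses the completion-level fact $G_{T(e)}=G^\fW(e)$, which follows immediately from density of $e\eps^\fW(G)$ in $G^\fW(e)$; the full compactification identification is deferred to a separate corollary.
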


\begin{proof}
(I) We first construct $T$.  Given $e$ in $\ze(G^\fW)$, we let
$\eta_e:G\to G^\fW$ be given by $\eta_e(s)=e\eps^\fW(s)$.  Then let
\[
T(e)=\sig(G,\{\eta_e\}).
\]
Notice that $\eta_e:G\to G^\fW(e)$, where the latter is topological group, so $T(e)$
is a group topology.  Furthermore, as $\fW^{T(e)}(G)=\fC(G^\fW)\circ\eta_e$,
we see that $T(e)=\sig(G,\fW^{T(e)}(G))$.  Thus $T(e)\in\fT(G)$.

We remark that 
\begin{equation}\label{eq:GTEisGWE}
G_{T(e)}=G^\fW(e).  
\end{equation}
Indeed, it suffices to show that
$e\eps^\fW(G)$ is dense in $G^\fW(e)$.  But this follows form the facts that
$\eta_e(G)=e\eps^\fW(G)\subseteq G^\fW(e)\subseteq eG^\fW$, and
$\eps^\fW(G)$ is dense in $G^\fW$.

(I') Observe that if $e_1\leq e_2$ in $\ze(G^\fW)$, then the map $\eta^{e_2}_{e_1}:G^\fW(e_2)
\to G^\fW(e_1)$ given by $\eta^{e_2}_{e_1}(s)=e_1s$, is continuous and satisfies
$\eta^{e_2}_{e_1}\circ\eta_{e_2}=\eta_{e_1}$, i.e.\ $\eta_{e_1}^{-1}(U)=\eta_{e_2}^{-1}
((\eta^{e_2}_{e_1})^{-1}(U))$ for open $U$ in $G^\fW(e_1)$.  Hence $T(e_1)\subseteq T(e_2)$.

(II)  We now construct $E$.  We let for $\tau$ in $\fT(G)$,
$\eta^\fW_\tau:G^\fW\to G_\tau^\fW$
be the unique continuous extension of $\eta_\tau$, given in (\ref{eq:tileta}).
Let $e^\fW_\tau=\eta^\fW_\tau\circ\eps^\fW(e_G)$, 
which is the identity of $G_\tau^\fW$, and $S_\tau=(\eta^\fW_\tau)^{-1}(\{e^\fW_\tau\})$,
so $S_\tau$ is a closed subsemigroup of $G^\fW$.  
Then Theorem \ref{theo:ruppert1} provides a unique minimal idempotent $E(\tau)$ for $S_\tau$.

We wish to verify that $E(\tau)\in\ze(G^\fW)$.  We let
$K_\tau=K(S_\tau)$ be the minimal ideal, which is a group thanks to Theorem \ref{theo:ruppert1}, 
with identity $E(\tau)$.
Given $s=\eps^\fW(t)$, for some $t$ in $G$, it is evident that $sS_\tau s^{-1}=S_\tau$.
This induces a semigroup automorphism on $S_\tau$ and hence
$sK_\tau s^{-1}$ is a minimal ideal, thus $sK_\tau s^{-1}=K_\tau$.
Furthermore, $sE(\tau)s^{-1}$ is an idempotent, hence the unique idempotent in
$K(\tau)$, so $sE(\tau)s^{-1}=E(\tau)$, or $sE(\tau)=E(\tau)s$.  By density of $\eps^\fW(G)$
in $G^\fW$, we see that $E(\tau)\in\ze(G^\fW)$.

Now if $e\in\idem(S_\tau)$, then $eE(\tau),E(\tau)e$ are idempotents dominated by $E(\tau)$, 
hence $E(\tau)$.  Hence $E(\tau)\leq e$.
We may thus express $E(\tau)$ by the succinct formula
\begin{equation}\label{eq:Etau}
E(\tau)=\min\idem(S_\tau)\text{ where }S_\tau=(\eta^\fW_\tau)^{-1}(\{e^\fW_\tau\}).
\end{equation}

(II') If $\tau_1\subseteq\tau_2$, then $(\eta^{\tau_2}_{\tau_1})^\fW\circ\eta^\fW_{\tau_2}
=\eta^\fW_{\tau_1}$, so $S_{\tau_1}\supseteq S_{\tau_2}$.  Thus $S_{\tau_2}K_{\tau_1}
\subseteq K_{\tau_1}$, so $E(\tau_2)E(\tau_1)$, being a product of commuting idempotents,
is an idempotent in $K_{\tau_1}$, hence is $E(\tau_1)$.  In other words, $E(\tau_1)\leq E(\tau_2)$.

If, further, $\tau_1\subseteq_c\tau_2$, then we have that for $K=\ker\eta^{\tau_2}_{\tau_1}$
there are isomorphisms
\begin{equation}\label{eq:aveK}
\fW(G_{\tau_1})\cong\fW(G_{\tau_2}/K)\cong\fW(G_{\tau_2})\ast m_K
\end{equation}
where each $f\ast m_K(s)=\int_K f(sk)\,dk$ ($s\in G_{\tau_2}$) is convolution with respect
to the normalized Haar measure on $K$.    Notice that $\eps^\fW_{\tau_2}(K)$ is a compact centric 
(i.e.\ $s\eps^\fW_{\tau_2}(K)=\eps^\fW_{\tau_2}(K)s$ for all $s$ in $G_{\tau_2}^\fW$) 
subgroup of the unit group of $G_{\tau_2}^\fW$. Then the algebras of (\ref{eq:aveK}) give 
character spaces
\[
G_{\tau_1}^\fW\cong G_{\tau_2}^\fW\ast m_{\eps^\fW_{\tau_2}(K)}
\cong G_{\tau_2}^\fW/\eps^\fW_{\tau_2}(K)
\]
where the last space is the semigroup $G_{\tau_2}^\fW$ modulo the congruence
$s\sim t$ if and only if $s\eps^\fW(K)=t\eps^\fW_{\tau_2}(K)$.  In particular, $s\sim e^\fW_{\tau_2}$
if and only if $s\in \eps^\fW_{\tau_2}(K)$.  Hence
if $e$ in $\ze(G^\fW)$ satisfies $\eta_{\tau_1}^\fW(e)=e^\fW_{\tau_1}$, then
since $\eta_{\tau_1}^\fW=(\eta^{\tau_2}_{\tau_1})^\fW\circ\eta_{\tau_2}^\fW$, we have
$\eta_{\tau_2}^\fW(e)\in ((\eta^{\tau_2}_{\tau_1})^\fW)^{-1}(\{e^\fW_{\tau_1}\})=
\eps^\fW(K)$, and thus $\eta_{\tau_2}^\fW(e)=e^\fW_{\tau_2}$.
Then, by (\ref{eq:Etau}) and the relation $E(\tau_1)\leq E(\tau_2)$, we see that $E(\tau_1)=E(\tau_2)$.

(III) We wish to verify the relations of the compositions, $E\circ T$ and $T\circ E$.
First, if $e\in\ze(G^\fW)$, then since $G_{T(e)}=G^\fW(e)\subseteq eG^\fW$, as given in 
(\ref{eq:GTEisGWE}), wee see  that
$\eta^\fW_{T(e)}(e')=e$ for any $e'$ in $\idem(G^\fW)$ for which $e'\geq e$.  Thus, using 
(\ref{eq:Etau}),   we obtain $E(T(e))=\min\idem((\eta^\fW_{T(e)})^{-1}(\{e\}))=e$.

Now if $\tau\in\fT(G)$,  (\ref{eq:Etau}) provides that $\eta^\fW_\tau(E(\tau))=e^\fW_\tau$, and 
we see for $s$ in $G$ that $\eta^\fW_\tau(\eps^\fW(s)E(\tau))=\eps^\fW_\tau\circ\eta_\tau(s)$, i.e.\ 
$\eps^\fW_\tau\circ\eta_\tau=\eta^\fW_\tau\circ\eta_{E(\tau)}\circ\eps^\fW$ on $G$, where $\eta_{E(\tau)}$
is defined as in (I).  Since $\eps^\fW_\tau:G_\tau\to G_\tau^\fW$ is a homeomorphism onto its range,
it follows that $\tau\subseteq T(E(\tau))$. Letting $K_\tau=K(S_\tau)$, as in (II), above, we see that
\begin{equation}\label{eq:EGWeqK}
E(\tau)G^\fW\cap S_\tau=E(\tau)S_\tau=K_\tau.
\end{equation}

Now let $\fF$ be a $\tau$-Cauchy filter on $G$.  We shall use Lemma \ref{lem:cocompact}
and the notation of its proof.  Then $\langle\eta_\tau(\fF)\rangle$ converges to 
an element $t$ in $G_\tau$.
Also, by compactness, $\langle\eta_{E(\tau)}(\fF)\rangle$ admits a cluster point $s$ in $E(\tau)G^\fW$.
We find that $\eta^\fW_\tau(s)=\eps^\fW_\tau(t)$.  Letting
$\fF^{-1}=\{\{r^{-1}:r\in F\}:F\in\fF\}$, we get Cauchy filter with, as above, associated
limit point $t^{-1}$ in $G_\tau$ and $s'$ in $E(\tau)G^\fW$.  But then
$ss'$ and $s's$ are in $K_\tau$.  Hence there are elements $l,r$ in $K_\tau$
for which $ls's=E(\tau)=ss'r$.  Thus $ls'=ls'E(\tau)=ls'ss'r=E(\tau)s'r=s'r$, and this element is the
inverse to $s$ in $E(\tau)G^\fW$, whence $s\in G^\fW(E(\tau))$.  Thus
we see that the filter $\langle \eta_{T(E(\tau))}(\fF)\rangle=\langle\eta_{E(\tau)}(\fF)\rangle$ 
on $G_{T(E(\tau))}=G^\fW(E(\tau))$ (see (\ref{eq:GTEisGWE}))
admits a Cauchy refinement, hence
$\fF$ admits a $T(E(\tau))$-Cauchy refinement on $G$.  Thus $\tau\subseteq_c T(E(\tau))$.
\end{proof}

Let us relate our theorem to the theory of ordered sets.  Our standard reference
is the treatise of Gierz {\it et al}, \cite{gierzhklms}.

\begin{remark}\label{rem:connection}
{\bf (i)} The pair of maps, $(E,T)$ is a  {\it Galois connection} or {\it adjunction} for the pair of partially
ordered sets $(\fT(G),\ze(G^\fW))$.  Indeed, this follows from the relations 
$E\circ T=\id_{\ze(G^\fW)}$ and $\tau\subseteq T\circ E(\tau)$, and \cite[O-3.6]{gierzhklms}.
In this terminology, $E$ is the {\it upper adjoint} and $T$ is the {\it lower adjoint}.
We shall use this in Section \ref{ssec:lattice} to establish the lattice structure on $\wbar{\fT}(G)$,
and in Section \ref{ssec:lc} to answer a question left open in the investigations \cite{ilies,ilies1}.

{\bf (ii)} The map $T\circ E:\fT(G)\to\wbar{\fT}(G)$, being order preserving
and idempotent, is a {\it closure operator}.  Whe will call
the set $\wbar{\fT}(G)=T\circ E(\fT(G))$, above, the set of {\it maximally cocompact closures} of elements of 
$\fT(G)$.   Observe that if $\kappa$ is any pre-compact topology, i.e.\ for which
$G_\kappa$ is compact, then $T(E(\kappa))=\tau_{ap}$,  where
$\tau_{ap}$ is defined in Example \ref{ex:ap}, since $G^{\fA\fP}$ is the largest
compact subgroup admitting a continuous dense image of $G$.  Hence
if $G$ is non-compact and  weakly almost periodic, then 
$\{\tau_{ap},\tau_G\}\subseteq\wbar{\fT}(G)$.

\end{remark}

Let us highlight some aspects of what we have shown above, which will be useful in the sequel.

\begin{corollary}\label{cor:galois1}
Given $\tau\in\fT(G)$,  the minimal ideal $K_\tau$ of the semigroup 
$S_\tau=(\eta^\fW_\tau)^{-1}(\{e^\fW_\tau\})$
is isomorphic to $\ker\eta^{T(E(\tau))}_\tau$.
\end{corollary}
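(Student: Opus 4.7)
The plan is to exploit the two structural identities $G_{T(E(\tau))}=G^\fW(E(\tau))$ from equation (\ref{eq:GTEisGWE}) and $E(\tau)G^\fW\cap S_\tau=K_\tau$ from equation (\ref{eq:EGWeqK}). Writing $\sigma=T(E(\tau))$, my goal is to transport $\ker\eta^\sigma_\tau\subseteq G_\sigma$ into an intersection inside $G^\fW$, and then recognize it as $K_\tau$.

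First I would identify the map $\eta^\sigma_\tau:G_\sigma\to G_\tau$ with the restriction of $\eta^\fW_\tau$ to $G_\sigma\subseteq G^\fW$. Since $\eps^\fW_\tau:G_\tau\to G_\tau^\fW$ is a homeomorphism onto its image, it suffices to verify $\eps^\fW_\tau\circ\eta^\sigma_\tau=\eta^\fW_\tau|_{G_\sigma}$; and by density of $\eta_\sigma(G)=E(\tau)\eps^\fW(G)$ in $G_\sigma$ this reduces to checking the equation on points of this form. For $s\in G$ one has $\eps^\fW_\tau\circ\eta^\sigma_\tau(\eta_\sigma(s))=\eps^\fW_\tau\circ\eta_\tau(s)$, and the identity $\eps^\fW_\tau\circ\eta_\tau=\eta^\fW_\tau\circ\eta_{E(\tau)}\circ\eps^\fW$ proved in part (III) of the proof of Theorem \ref{theo:galois} closes the circle, since $\eta_{E(\tau)}\circ\eps^\fW=\eta_\sigma$. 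Consequently $\ker\eta^\sigma_\tau$ is topologically isomorphic (via $\eps^\fW_\tau$) to $G^\fW(E(\tau))\cap S_\tau$.

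The remaining task is the set equality $G^\fW(E(\tau))\cap S_\tau=K_\tau$. For the inclusion $(\supseteq)$, any $s\in K_\tau$ lies in $S_\tau$ by definition, and since $K_\tau$ is a compact topological group with identity $E(\tau)$ (Theorem \ref{theo:ruppert1}) contained in $E(\tau)G^\fW E(\tau)$, there is an inverse $s'\in K_\tau$ satisfying $ss'=s's=E(\tau)$, placing $s\in G^\fW(E(\tau))$. For $(\subseteq)$, if $s\in G^\fW(E(\tau))\cap S_\tau$, then $s\in E(\tau)G^\fW E(\tau)$ so $s=E(\tau)s$, hence $s\in E(\tau)G^\fW\cap S_\tau=K_\tau$ by (\ref{eq:EGWeqK}).

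The main obstacle is bookkeeping rather than conceptual depth: disentangling the three ambient objects $G$, $G_\sigma$ and $G^\fW$ together with the maps $\eta_\sigma$, $\eta_\tau$, $\eta^\sigma_\tau$, $\eta^\fW_\tau$ and $\eps^\fW_\tau$ so that the identification of $\eta^\sigma_\tau$ with multiplication-by-$E(\tau)$ inside $G^\fW$ is truly canonical. Once the kernel has been transported inside $G^\fW$, the conclusion is forced by the two previously established identities together with the fact that $K_\tau$ is a group with identity $E(\tau)$.
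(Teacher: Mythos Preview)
Your argument is correct and follows essentially the same route as the paper's proof: identify $G_{T(E(\tau))}$ with $G^\fW(E(\tau))$ via (\ref{eq:GTEisGWE}), recognize $\eta^{T(E(\tau))}_\tau$ as (up to composition with $\eps^\fW_\tau$) the restriction of $\eta^\fW_\tau$, and then invoke (\ref{eq:EGWeqK}) to equate the resulting kernel with $K_\tau$. You have simply filled in the details that the paper leaves implicit, in particular the two inclusions giving $G^\fW(E(\tau))\cap S_\tau=K_\tau$ and the density argument verifying the compatibility of $\eta^\sigma_\tau$ with $\eta^\fW_\tau$.
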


\begin{proof}
It follows from (\ref{eq:EGWeqK}), and in the notation used there,
that $K_\tau=G^\fW(E(\tau))\cap S_\tau$.   Since $G_{T(E(\tau))}=G^\fW(E(\tau))$, 
as observed in (\ref{eq:GTEisGWE}), we have that 
$\eps^\fW\circ\eta^{T(E(\tau))}_\tau\cong\eta^\fW_\tau|_{G^\fW(E(\tau))}$.  It then follows that
$K_\tau\cong\ker\eta^{T(E(\tau))}_\tau$.
\end{proof}

\begin{corollary}\label{cor:galois2}
Given $\tau\in\fT(G)$ we have topological isomorphisms of semigroups:

{\bf (i)}  $G^\fW_\tau\cong E(\tau)G^\fW$ if $\tau\in\wbar{\fT}(G)$; and

{\bf (ii)} $G^\fW_\tau\cong(E(\tau)G^\fW)/K_\tau$ if $\tau\in\fT(G)$.
\end{corollary}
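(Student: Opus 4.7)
My strategy is to prove (i) directly, identifying $G^\fW_\tau$ with $E(\tau)G^\fW$ via the restriction of the canonical extension $\eta^\fW_\tau:G^\fW\to G^\fW_\tau$, and then to derive (ii) by composing this identification, applied at the maximally cocompact closure $T(E(\tau))$, with the cocompact quotient description from the proof of Theorem \ref{theo:galois}.

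For (i), I assume $\tau = T(E(\tau))$ and consider $\rho = \eta^\fW_\tau|_{E(\tau)G^\fW}$, a continuous semigroup homomorphism on a closed subsemigroup of $G^\fW$. Because $E(\tau)\in S_\tau$ by (\ref{eq:Etau}), one has $\rho(E(\tau) s) = e^\fW_\tau\eta^\fW_\tau(s) = \eta^\fW_\tau(s)$ for each $s\in G^\fW$; together with surjectivity of $\eta^\fW_\tau$ (a consequence of density of $\eta_\tau(G)$ in $G_\tau$ and the universal property (\ref{eq:tileta})), this gives surjectivity of $\rho$ onto $G^\fW_\tau$. The fiber $\rho^{-1}(e^\fW_\tau)$ equals $E(\tau)G^\fW\cap S_\tau = K_\tau$ by (\ref{eq:EGWeqK}); and under our standing assumption $\tau=T(E(\tau))$, Corollary \ref{cor:galois1} gives $K_\tau\cong\ker\eta^{T(E(\tau))}_\tau = \{E(\tau)\}$, so $\rho$ is also injective. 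A continuous bijection between compact Hausdorff spaces is a homeomorphism, so $\rho$ will be the required topological isomorphism of semigroups.

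For (ii), Theorem \ref{theo:galois} provides $T(E(\tau))\in\wbar{\fT}(G)$ and $\tau \subseteq_c T(E(\tau))$. I then apply the cocompact quotient description derived from (\ref{eq:aveK}) inside the proof of Theorem \ref{theo:galois} to this pair, obtaining a topological semigroup isomorphism
\[
G^\fW_\tau \;\cong\; G^\fW_{T(E(\tau))}\big/\eps^\fW_{T(E(\tau))}\bigl(\ker\eta^{T(E(\tau))}_\tau\bigr).
\]
Since $E\circ T = \id_{\ze(G^\fW)}$, part (i) applied at $T(E(\tau))$ identifies $G^\fW_{T(E(\tau))}$ with $E(\tau)G^\fW$; substituting and rewriting the denominator yields (ii).

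The main obstacle is the bookkeeping in this last step: checking that, under the identification of (i) and the identification $G_{T(E(\tau))} = G^\fW(E(\tau))$ from (\ref{eq:GTEisGWE}), the compact subgroup $\eps^\fW_{T(E(\tau))}(\ker\eta^{T(E(\tau))}_\tau)$ of the unit group sits inside $E(\tau)G^\fW$ precisely as $K_\tau$, rather than merely being isomorphic to it.  This is essentially already implicit in Corollary \ref{cor:galois1}, which realizes $\ker\eta^{T(E(\tau))}_\tau$ as $K_\tau \subseteq G^\fW(E(\tau))\subseteq E(\tau)G^\fW$ via the restriction of $\eta^\fW_\tau$; but the compatibility of every arrow, via (\ref{eq:GTEisGWE}), (\ref{eq:Etau}), and (\ref{eq:EGWeqK}), should be recorded carefully so that the quotient of $E(\tau)G^\fW$ by the congruence $s\sim t\iff sK_\tau=tK_\tau$ matches the quotient of $G^\fW_{T(E(\tau))}$ from the cocompact factorization.
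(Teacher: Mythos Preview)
Your argument for (i) has a genuine gap at the injectivity step. You show that $\rho=\eta^\fW_\tau|_{E(\tau)G^\fW}$ is a continuous surjective semigroup homomorphism and that the fiber $\rho^{-1}(e^\fW_\tau)$ is the singleton $\{E(\tau)\}$, and then conclude that $\rho$ is injective. That inference is valid for group homomorphisms, but $E(\tau)G^\fW$ is only a compact semitopological \emph{semigroup}; a monoid homomorphism can have trivial fiber over the identity and still fail to be injective. Knowing that $\rho$ is injective on the dense unit group $G^\fW(E(\tau))$ (where it is a group homomorphism with trivial kernel) does not help either, since injectivity on a dense subset does not pass to the closure.

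The paper avoids this difficulty by arguing in both directions via universal properties rather than proving injectivity directly. From (\ref{eq:GTEisGWE}) one has $G_\tau\cong G^\fW(E(\tau))$, so the inclusion $G_\tau\hookrightarrow E(\tau)G^\fW$ exhibits $E(\tau)G^\fW$ as a semitopological compactification of $G_\tau$; the universal property of $G^\fW_\tau$ then yields a continuous surjective homomorphism $G^\fW_\tau\to E(\tau)G^\fW$. Together with your surjection $\rho:E(\tau)G^\fW\to G^\fW_\tau$, the two composites agree with the identity on the dense image of $G_\tau$, hence are identities, and $\rho$ is a topological isomorphism. You can repair your proof by inserting exactly this step. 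Your treatment of (ii) is then fine and matches the paper's: it is precisely the combination of (i) at $T(E(\tau))$ with the cocompact quotient description from part (II') of the proof of Theorem~\ref{theo:galois}, and your bookkeeping via Corollary~\ref{cor:galois1} and (\ref{eq:GTEisGWE}) is the right way to see that the compact subgroup being quotiented out is $K_\tau$ itself.
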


\begin{proof}
We have that $\eta^\fW_\tau(E(\tau))=e^\fW_\tau$.  Letting
$\eta_{E(\tau)}:G^\fW\to G^\fW$ be the map of multiplication by $E(\tau)$ we then have
that
\begin{equation}\label{eq:epsWtisEepsW}
\eps^\fW_\tau\circ\eta_\tau=\eta^\fW_\tau\circ\eps^\fW=\eta^\fW_\tau\circ\eta_{E(\tau)}\circ\eps^\fW
\text{ on }G.
\end{equation}
Hence the map $\eta^\fW_\tau$ factors through $\eta_{E(\tau)}(G^\fW)=E(\tau)G^\fW$.
Conversely, since $G_\tau\cong G^\fW(E(\tau))$, as follows from (\ref{eq:GTEisGWE}), then 
the universal property of $G^\fW_\tau$ shows that $E(\tau)G^\fW$ must be a quotient of
$G^\fW_\tau$.  Thus we obtain (i).   Then (ii) follows from  part (II')
of the proof of Theorem \ref{theo:galois}.
\end{proof}

\subsection{Lattice structure}\label{ssec:lattice}
Notice that $\ze(G^\fW)$ is a semilattice with $e\wedge e'=ee'$, possessing 
a maximal element $e^\fW_G=\eps^\fW(e_G)$, and a minimal element $e_{ap}=E(\tau_{ap})$.  
If $S\subseteq\ze(G^\fW))$ is non-empty then we let its infimum be given by
\begin{equation}\label{eq:infzegw}
\prod S=\lim_{\substack{F\Subset S \\ F\nearrow S}}\prod_{e\in F}e
\end{equation}
where $F\Subset S$ indicates that $F$ is a non-empty finite subset of $S$ and the family
of such set is directed by inclusion. 
Let us say a few words to verify that this infimum is defined. 
Consider the net $(e_F)_F=(\prod_{e\in F}e)_F$.  
For any pair $(e_{F(j)})_j$ and $(e_{F(i)})_i$ of converging subnets, 
we have $e_Fe_{F(j)}=e_{F(j)}$ for $F(j)\supseteq F$ and similar for $i$, so letting $s=\lim_j e_{F(j)}$ 
and $t=\lim_i e_{F(i)}$ we have
\[
s=\lim_i\lim_je_{F(i)}e_{F(j)}=st=\lim_j\lim_i e_{F(i)}e_{F(j)}=t.
\]
Hence $(e_F)_F$ admits a unique cluster point, hence a limit which is clearly an idempotent,
which is easily verified to be central.  Then $\lim_Fe_F$ is clearly a lower bound for $S$.
If $e$ in $\ze(G^\fW)$ is any other lower bound, then $ee_F=e$ for any $F$, so $e\lim_Fe_F=e$,
i.e.\ $e\leq \lim_Fe_F$.  With similar effort, this result may be deduced from \cite[I.2.6]{ruppertB}.
We will simply define the supremum by
\[
\bigvee S=\bigwedge\{e\in\ze(G^\fW):es=s\text{ for all }s\text{ in }S\}.
\]
We do not presently
have a more ``intrinsic" definition of supremum.  

If $\fS\subseteq\fT(G)$ is non-empty then its intersection $\bigcap\fS$ is certainly a group topology
on $G$.  Indeed, one needs only to inspect the inverse image of $U$ in $\bigcap\fS$ under
both the product and inversion maps.
Moreover, $\bigcap\fS\in\fT(G)$ as we have that 
\[
\bigcap\fS=\bigcap_{\tau\in\fS}\sig(G,\fW^\tau(G))=\sig\left(G,\bigcap_{\tau\in\fS}\fW^\tau(G)\right)
=\sig(G,\fW^{\bigcap\fS}(G))
\]
The definition of the supremum is more satisfactory than for $\ze(G^\fW)$, above.
We let 
\[
\bigvee\fS=\sig(G,\{\del\})\text{ where }\del:G\to\prod_{\tau\in\fS}G_\tau,\;\del(t)=(\eta_\tau(t))_{\tau\in\fS}.
\]
We observe that since we have an embedding into a compact semi-topological semigroup,
$\prod_{\tau\in\fS}G_\tau\hookrightarrow\prod_{\tau\in\fS}G_\tau^\fW$, we obtain that
$\bigvee\fS\in\fT(G)$.

It is not a priori obvious for $\tau_1,\tau_2$ in $\wbar{\fT}(G)$ that either of $\tau_1\vee\tau_2$ or
$\tau_1\cap\tau_2$ is in $\wbar{\fT}(G)$.  However, this follows from the completeness of the lattice
$\fT(G)$, as is evident from the next result.

\begin{proposition}\label{prop:tarski}
The subset $\wbar{\fT}(G)$ of maximal cocompact closures of $\fT(G)$ is a complete
sublattice of $\fT(G)$.
\end{proposition}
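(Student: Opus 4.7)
The plan is to derive the proposition from Tarski's fixed-point theorem, or more precisely its closure-operator refinement (cf.\ \cite{gierzhklms}), applied to the map $c:=T\circ E$ on the complete lattice $\fT(G)$. First I would record that $\fT(G)$ is itself a complete lattice: the two paragraphs immediately preceding the proposition already exhibit arbitrary intersections $\bigcap\fS$ as infima inside $\fT(G)$ and the initial topology $\bigvee\fS=\sig(G,\{\delta\})$ for $\delta:G\to\prod_{\tau\in\fS}G_\tau$ as suprema.

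Next I would verify that $c=T\circ E$ is a closure operator on $\fT(G)$. Monotonicity follows from the first two bullets of Theorem \ref{theo:galois} composed; extensivity is the assertion $\tau\subseteq_c T(E(\tau))$ from the last bullet, which in particular gives $\tau\subseteq T(E(\tau))$; and idempotency is the calculation
\[
c\circ c=T\circ E\circ T\circ E=T\circ E=c,
\]
which exploits the identity $E\circ T=\id_{\ze(G^\fW)}$. The fixed-point set of $c$ is exactly $\wbar{\fT}(G)=T(\ze(G^\fW))$: any $\tau=T(e)$ satisfies $c(\tau)=T(E(T(e)))=T(e)=\tau$, and conversely every fixed point lies in the image of $T$.

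Finally, I would invoke the standard fact that the fixed-point set of a closure operator on a complete lattice is again a complete lattice, in which meets are inherited from the ambient lattice and joins are obtained by closing the ambient join. Concretely, for $\fS\subseteq\wbar{\fT}(G)$ the estimate $c(\bigcap\fS)\subseteq c(\tau)=\tau$ (valid for each $\tau\in\fS$ by monotonicity of $c$) gives $c(\bigcap\fS)\subseteq\bigcap\fS$, which combined with extensivity forces $\bigcap\fS\in\wbar{\fT}(G)$; and the join of $\fS$ in $\wbar{\fT}(G)$ takes the form $c(\bigvee\fS)=T(\bigvee_{\tau\in\fS}E(\tau))$, where the last equality uses that $E$, being the lower adjoint of the Galois connection (Remark \ref{rem:connection}), preserves arbitrary joins. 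The only real work, and hence the main obstacle, is organizing the relations in Theorem \ref{theo:galois} so that $c=T\circ E$ genuinely becomes a closure operator; once $E\circ T=\id$ is placed at the center of the argument, the conclusion is a formal consequence of Tarski's theorem.
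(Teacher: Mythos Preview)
Your argument is correct and is exactly the paper's approach: both identify $\wbar{\fT}(G)$ as the fixed-point set of the closure operator $T\circ E$ on the complete lattice $\fT(G)$ and then appeal to Tarski's theorem \cite[O-2.3]{gierzhklms}. One small slip: in your final clause you call $E$ the \emph{lower} adjoint, but Remark~\ref{rem:connection} records that $E$ is the \emph{upper} adjoint and $T$ the lower one; the identity $c(\bigvee\fS)=T\bigl(\bigvee_{\tau\in\fS}E(\tau)\bigr)$ is still correct, but it follows from $\bigvee\fS=\bigvee_{\tau\in\fS}T(E(\tau))=T\bigl(\bigvee_{\tau\in\fS}E(\tau)\bigr)$ via the join-preservation of $T$, not of $E$.
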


\begin{proof}
This is an application of Tarski's fixed point theorem, \cite[O-2.3]{gierzhklms}.
Indeed,  $\wbar{\fT}(G)$ is the set of fixed points
for the monotone closure operator $T\circ E$ on $\fT(G)$.  
\end{proof}

\begin{proposition}\label{prop:lattice}
The maps $E:\fT(G)\to\ze(G^\fW)$ and $T:\ze(G^\fW)\to\wbar{\fT}(G)\subseteq\fT(G)$ 
enjoy the following lattice continuity properties:
\[
E(\bigcap\fS)=\prod E(\fS)\text{ and }E(\bigvee\fS)=\bigvee E(\fS)
\]
for any non-empty subset $\fS$ of $\fT(G)$, respectively in $\wbar{\fT}(G)$, and
\[
T(\bigvee S)=\bigvee T(S)\text{ and }T(\prod S)=\bigcap T(S)
\]
for any non-empty subset $S$ of $\ze(G^\fW)$. 
\end{proposition}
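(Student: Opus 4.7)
The approach combines the Galois connection structure from Remark \ref{rem:connection}(i), standard adjoint-functor identities for posets from \cite{gierzhklms}, and the complete-sublattice property of $\wbar\fT(G)$ established in Proposition \ref{prop:tarski}.

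Two of the four identities follow immediately from general adjunction theory. Since $(E,T)$ is a Galois connection satisfying $E(\tau)\leq e\iff\tau\subseteq T(e)$, $E$ preserves arbitrary suprema in $\fT(G)$ and $T$ preserves arbitrary infima in $\ze(G^\fW)$, giving $E(\bigvee\fS)=\bigvee E(\fS)$ for every non-empty $\fS\subseteq\fT(G)$ (hence in particular for $\fS\subseteq\wbar\fT(G)$) and $T(\prod S)=\bigcap T(S)$ for every non-empty $S\subseteq\ze(G^\fW)$.

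For $T(\bigvee S)=\bigvee T(S)$, the $\supseteq$ inclusion is immediate from monotonicity. For the converse, Proposition \ref{prop:tarski} puts $\tau:=\bigvee T(S)$ in $\wbar\fT(G)$, so $\tau=T(E(\tau))$. Applying $E$ (monotone) to each inclusion $T(e)\subseteq\tau$ and using $E\circ T=\id$ gives $e\leq E(\tau)$ for every $e\in S$, hence $\bigvee S\leq E(\tau)$; monotonicity of $T$ then yields $T(\bigvee S)\subseteq T(E(\tau))=\tau$.

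The remaining identity $E(\bigcap\fS)=\prod E(\fS)$ is the main difficulty. Monotonicity gives $E(\bigcap\fS)\leq\prod E(\fS)$. For the reverse, I use $E(\tau)=\min\idem(S_\tau)$ from (\ref{eq:Etau}). The factorization $\eta^\fW_{\bigcap\fS}=(\eta^\tau_{\bigcap\fS})^\fW\circ\eta^\fW_\tau$ for each $\tau\in\fS$ shows $E(\tau)\in S_{\bigcap\fS}$, so the net defining $\prod E(\fS)$ in (\ref{eq:infzegw}) converges inside the closed subsemigroup $S_{\bigcap\fS}$, placing $\prod E(\fS)\in S_{\bigcap\fS}$. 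The plan is then to show $\prod E(\fS)\in K_{\bigcap\fS}$; by Theorem \ref{theo:ruppert1}, $K_{\bigcap\fS}$ is a group with unique idempotent $E(\bigcap\fS)$, so this would force $\prod E(\fS)=E(\bigcap\fS)$. Via the identification $K_{\bigcap\fS}=E(\bigcap\fS)S_{\bigcap\fS}$ from (\ref{eq:EGWeqK}), the task reduces to showing the two-sided ideal $(\prod E(\fS))\cdot S_{\bigcap\fS}$ coincides with $K_{\bigcap\fS}$ rather than merely containing it. This minimality, the principal obstacle, I expect to establish by leveraging the intrinsic group $G^\fW(\prod E(\fS))$ from Theorem \ref{theo:ruppert} together with Corollary \ref{cor:galois1}, which identifies minimal ideals of closed subsemigroups of $G^\fW$ with completion kernels.
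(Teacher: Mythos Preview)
Your approach is essentially the paper's, but you are working much harder than necessary on the last identity.  The paper's proof simply invokes \emph{two} Galois connections together with the general fact \cite[O-3.3]{gierzhklms} that in any adjunction the upper adjoint preserves infima and the lower adjoint preserves suprema.  The first connection is the one you use, between $\fT(G)$ and $\ze(G^\fW)$.  The second is $(T,E|_{\wbar\fT(G)})$ between $\ze(G^\fW)$ and $\wbar\fT(G)$: since $T$ and $E|_{\wbar\fT(G)}$ are mutually inverse order-isomorphisms (Theorem \ref{theo:galois}), each is simultaneously an upper and a lower adjoint for the other.  This second connection yields the remaining two identities $T(\bigvee S)=\bigvee T(S)$ and $E(\bigcap\fS)=\prod E(\fS)$ in one stroke, with Proposition \ref{prop:tarski} ensuring the suprema and infima computed in $\wbar\fT(G)$ coincide with those in $\fT(G)$.

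Your direct argument for $T(\bigvee S)=\bigvee T(S)$ is correct and is exactly this order-isomorphism reasoning, carried out by hand.  The identical one-line argument handles $E(\bigcap\fS)=\prod E(\fS)$ for $\fS\subseteq\wbar\fT(G)$: an order-isomorphism preserves all infima.  Your incomplete semigroup-theoretic plan for arbitrary $\fS\subseteq\fT(G)$ is therefore unnecessary; it targets a stronger claim than the paper's method establishes, and only the $\wbar\fT(G)$ case is ever used later (e.g.\ in Proposition \ref{prop:sumideals}).  Indeed, with $E$ the lower adjoint---as you correctly determine from $E(\tau)\leq e\iff\tau\subseteq T(e)$---it is the \emph{supremum} identity for $E$ that holds on all of $\fT(G)$, and the infimum identity only on $\wbar\fT(G)$; the ``respectively'' in the statement and the upper/lower labels in Remark \ref{rem:connection} appear to be transposed.
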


\begin{proof}
As noted in Remark \ref{rem:connection}, 
The pair $(E,T)$ is the pair of upper and lower adjunctions for the pair
$(\fT(G),\ze(G^\fW))$, while $(T,E)$ is the pair of upper and lower adjuctions for the pair
$(\ze(G^\fW),\wbar{\fT}(G))$.  Upper adjuctions respect infema, while lower adjunctions respect
suprema, thanks to \cite[O-3.3]{gierzhklms}.
\end{proof}

\section{Eberlein-de Leeuw-Glicksberg ideals}

\subsection{Definition and main result}
Let $G=(G,\tau_G)$ be a weakly almost periodic group.

\begin{definition}
An {\it Eberlein-de Leeuw-Glicksberg ideal}  (E-dL-G ideal) of $\fW(G)$ is a proper closed proper ideal
$\fJ$ which satisfies

$\bullet$ $\fJ$ is translation invariant, and

$\bullet$ there is a C*-subalgebra $\fA$ of $\fW(G)$ which is a closed linear complement of $\fI$.
\end{definition}

The above definition allows $\{0\}$ to be considered a E-dL-G ideal.

If $s\in G^\fW$ and $u\in\fW(G)$, we let for $t$ in $G$ $u\cdot s(t)=\lim_i u(s_it)$
where $(s_i)$ is any net for which $s=\lim_i\eps^\fW(s_i)$.

The following is our main result regarding these ideals.

\begin{theorem}\label{theo:main}
{\bf (i)}  Given $\tau$ in $\wbar{\fT}(G)$, $\fW^\tau(G)=\fW(G)\cdot E(\tau)$.  Moreover
\[
\fI(\tau)=\{u\cdot E(\tau)=0:u\in\fW(G)\}
\]
is a E-dL-G ideal  for which $\fW^\tau(G)$ is its linear complement. Hence
\[
\fW(G)=\fW^\tau(G)\oplus\fI(\tau).
\] 

{\bf (ii)} Any E-dL-G ideal $\fJ$ is of the form $\fI(\tau)$ of (i), above.
\end{theorem}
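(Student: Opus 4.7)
For (i), I plan to work via the Gelfand isomorphism $\fW(G)\cong C(G^\fW)$ under $u\mapsto\tilde u$, whereby $\widetilde{u\cdot x}(y)=\tilde u(xy)$ for $x,y\in G^\fW$, so that $\widetilde{u\cdot E(\tau)}$ is the pullback of $\tilde u$ along the continuous map $y\mapsto E(\tau)y$. The operator $P(u):=u\cdot E(\tau)$ is therefore a bounded $*$-homomorphism of $\fW(G)$ satisfying $P^2=P$, since $E(\tau)^2=E(\tau)$. Its kernel is $\fI(\tau)$ by definition, and I would identify its image with $\fW^\tau(G)$ using Corollary~\ref{cor:galois2}(i)---which supplies $E(\tau)G^\fW\cong G^\fW_\tau$ with $s\mapsto E(\tau)\eps^\fW(s)$ corresponding to $\eta_\tau$---together with (\ref{eq:WGtau}), yielding $\fW(G)\cdot E(\tau)=C(G^\fW_\tau)\circ\eta_\tau=\fW^\tau(G)$. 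Translation invariance of $\fI(\tau)$ is then immediate from centrality of $E(\tau)$, since $(u\cdot s)\cdot E(\tau)=(u\cdot E(\tau))\cdot s$ and symmetrically on the left; the splitting $u=P(u)+(u-P(u))$ finishes (i).

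For (ii), I would take $\fJ$ and $\fA$ as given and write $\fJ=\{u:\tilde u|_F=0\}$ for the unique closed $F\subseteq G^\fW$. Translation invariance of $\fJ$ gives $\eps^\fW(s)F\subseteq F$ and $F\eps^\fW(s)\subseteq F$ for $s\in G$, and density of $\eps^\fW(G)$ combined with separate continuity on $G^\fW$ promotes this to show $F$ is a closed two-sided ideal of $G^\fW$. The direct-sum decomposition $\fW(G)=\fA\oplus\fJ$ together with the restriction isomorphism $\fA\cong\fW(G)/\fJ\cong C(F)$ then produces a $*$-algebra section $C(F)\hookrightarrow C(G^\fW)$ of the restriction map, which dualizes to a continuous retraction $r:G^\fW\to F$ with $r|_F=\id$. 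The main obstacle I anticipate is justifying that $\fA$ may be taken translation-invariant---the translation action on the quotient $\fW(G)/\fJ$ is intrinsic, and combined with the $*$-algebra lift this should force $G$-equivariance of $\fA$---equivalently, that $r$ satisfies $r(y\eps^\fW(s))=r(y)\eps^\fW(s)$ and $r(\eps^\fW(s)y)=\eps^\fW(s)r(y)$ for all $s\in G$ and $y\in G^\fW$. By density of $\eps^\fW(G)$ and separate continuity, these identities extend to $r(yz)=r(y)z$ and $r(zy)=zr(y)$ for all $y,z\in G^\fW$.

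Setting $e:=r(\eps^\fW(e_G))\in F$ and substituting $y=\eps^\fW(e_G)$ or $z=\eps^\fW(e_G)$ into these equivariance identities gives $r(z)=ez=ze$ for all $z\in G^\fW$; hence $e$ commutes with every element of $G^\fW$, and since $e\in F$ and $r|_F=\id$, we obtain $e=r(e)=e^2$, so $e\in\ze(G^\fW)$. Therefore $F=r(G^\fW)=eG^\fW$. Taking $\tau:=T(e)\in\wbar{\fT}(G)$, Theorem~\ref{theo:galois} gives $E(\tau)=e$, and part~(i) then yields $\fI(\tau)=\{u:\tilde u|_{eG^\fW}=0\}=\{u:\tilde u|_F=0\}=\fJ$, completing (ii). Once the subtle point about translation invariance of $\fA$ is settled, the rest is a clean application of the retraction analysis together with the Galois machinery of Theorem~\ref{theo:galois} and Corollary~\ref{cor:galois2}.
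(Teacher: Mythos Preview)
Your proof follows essentially the same route as the paper's: part (i) via Corollary~\ref{cor:galois2}(i) and the Gelfand picture, and part (ii) by showing the hull $F$ (the paper's $H$) is a closed two-sided ideal of $G^\fW$ and then extracting a central idempotent $e$ with $F=eG^\fW$ via the C*-complement $\fA$. Where you phrase the extraction through a retraction $r:G^\fW\to F$ and its equivariance, the paper phrases it through recognizing $e$ as the identity of the semigroup $H$ and then checking centrality via the conjugation automorphisms $\gamma_t$; these are the same argument in different clothing.

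The point you flag as the main obstacle---that $\fA$ can be taken translation-invariant, equivalently that your $r$ is $G$-equivariant---is precisely where the paper's proof is also terse: it asserts that the isomorphism $\fA\cong\fW(G)/\fJ$ ``commutes with the actions of translation'' and then concludes ``$\fA$ is unital and translation invariant,'' which is exactly the heuristic you propose. So your instinct matches the paper's treatment; neither spells out this step in more detail than the other. One small caution: your statement that the section ``dualizes to a continuous retraction $r:G^\fW\to F$'' already presumes the section is unital (i.e.\ $1\in\fA$), which is in fact a consequence of translation-invariance of $\fA$, so this is subsumed by the obstacle you identify rather than a separate issue.
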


\begin{proof}
(i)  That $\fW^\tau(G)=\fW(G)\cdot E(\tau)$ is a simple consequence of
Corollary \ref{cor:galois2} (i). 
Moreover, the set $\fI(\tau)$ is the set of elements of $\fW(G)\cong\fC(G^\fW)$ which
vanish on $E(\tau)G^\fW$, and hence $\fI(\tau)$ is an ideal in $\fW(G)$.
Since $E(\tau)G^\fW$ is an ideal in $G^\fW$, $\fI(\tau)$ is translation invariant.

(ii) Let $H=\{s\in G^\fW:s(\fJ)=0\}$ be the hull of $\fJ$.  Since $\fJ$ is translation-invariant
we find that $\eps^\fW(G)\cdot H,H\cdot\eps^\fW(G)\subseteq H$, and it follows from
density and continuity that $H$ is an ideal in $G^\fW$.
Let $\fA$ be a C*-algebra which is the linear complement of $\fJ$. 
Then the composition of maps $\fA\hookrightarrow \fW(G)\to\fW(G)/\fJ$
must be an injective $*$-homomorphism, hence isomorphism of C*-algebras, and,
moreover, commutes with the actions of translation.
We conclude that $\fA$ is unital and translation invariant.
Thus the Gelfand spectrum $G^\fA$ of $\fA$ is topologically isomorphic to
the semigroup $H$.  In particular, the element $e$ in $H$ corresponding to
$\eps^\fA(e_G)$ in $G^\fA$ is the identity for $H$.  

If $t\in \eps^\fW(G)$, then (\ref{eq:tileta}) then $\gamma_t(s)=tst^{-1}$ defines
a continuous homomorphism on $G^\fW$ with inverse $\gamma_{s^{-1}}^\fW$,
hence an automorphism of $G^\fW$.  Since $H$ is an ideal, it follows that
$\gamma_s^\fW|_H$ is an automorphism on $H$, so $\gamma_s^\fW(e)=e$.
In other words, $e$ commutes
with each element of $\eps^\fW(G)$, and thus $e\in \ze(G^\fW)$.  

We conclude that
$H=eG^\fW$, and hence $\fJ=\{u\in\fW(G):u\cdot e=0\}=\fI(T(e))$,
since $e=E(T(e))$, by Theorem \ref{theo:galois}.
\end{proof}

For the case of $\tau_{ap}$, we recover exactly the classical decomposition (\ref{eq:meandecomp}),
due to Eberlein, and to de Leeuw and Glicksberg.

Since for $\tau$ in $\wbar{\fT}(G)$, $\fW^\tau(G)$ is the subalgebra of $\tau$-continuous
elements of $\fW(G)$, we may deem $\fI(\tau)$ to be the ideal of ``purely $\tau$-discontinous
functions" in $\fW(G)$.  This ideal enjoys the following property.

\begin{lemma}\label{lem:bergelsonr}
If $\tau$ in $\wbar{\fT}(G)$, and $u_1,\dots,u_n\in\fI(\tau)$, then for any $\eps>0$
and for any neighbourhood $U$ of $e_G$ in $\tau$, there is $t$ in $U$ for which
$|u_k(t)|<\eps$ for each $k$.
\end{lemma}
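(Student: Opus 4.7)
The plan is to work through the Gelfand picture $\fW(G)\cong\fC(G^\fW)$, where each $u_k$ corresponds to a continuous $\tilde u_k$ on $G^\fW$ with $\tilde u_k\circ\eps^\fW=u_k$. First I would unpack what membership in $\fI(\tau)$ says at the level of $G^\fW$. By definition, $u_k\cdot E(\tau)=0$ on $G$ means $\tilde u_k\bigl(E(\tau)\eps^\fW(t)\bigr)=0$ for every $t\in G$. Invoking density of $\eps^\fW(G)$ in $G^\fW$ together with separate continuity of multiplication (the map $s\mapsto E(\tau)s$ is continuous in the semi-topological semigroup $G^\fW$), this extends to $\tilde u_k|_{E(\tau)G^\fW}\equiv 0$; in particular $\tilde u_k(E(\tau))=0$ for each $k$.

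Next, since $\tau\in\wbar{\fT}(G)$ we have $\tau=T(E(\tau))$, so by construction $\tau$ is the initial topology generated by $\eta_{E(\tau)}\colon s\mapsto E(\tau)\eps^\fW(s)$. Thus the given $\tau$-neighbourhood $U$ of $e_G$ contains a basic open set of the form $\eta_{E(\tau)}^{-1}(W)$ for some open neighbourhood $W$ of $E(\tau)$ in $G^\fW$.

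The key step is then to produce a single element $t\in G$ witnessing both constraints at once. I would choose any net $(s_i)\subseteq G$ with $\eps^\fW(s_i)\to E(\tau)$ in $G^\fW$, which exists by density of $\eps^\fW(G)$. Continuity of each $\tilde u_k$ gives $u_k(s_i)=\tilde u_k(\eps^\fW(s_i))\to\tilde u_k(E(\tau))=0$, so eventually $|u_k(s_i)|<\eps$ for all $k=1,\ldots,n$. At the same time, separate continuity of multiplication applied to the fixed factor $E(\tau)$ yields
\[
\eta_{E(\tau)}(s_i)=E(\tau)\eps^\fW(s_i)\longrightarrow E(\tau)^2=E(\tau)\in W,
\]
so eventually $\eta_{E(\tau)}(s_i)\in W$, which is to say $s_i\in\eta_{E(\tau)}^{-1}(W)\subseteq U$. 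Choosing $i$ large enough that both ``eventually'' statements hold simultaneously and setting $t=s_i$ completes the proof.

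The main potential obstacle is that $G^\fW$ is only semi-topological, so multiplication is merely separately continuous; the argument is careful to use only continuity in one variable at a time, and it relies crucially on $E(\tau)$ being central (so that $\eta_{E(\tau)}(s)=E(\tau)\eps^\fW(s)=\eps^\fW(s)E(\tau)$, allowing the relevant convergences to be read off by fixing $E(\tau)$ in whichever slot is convenient). Beyond that the argument is soft: no finite intersection or Baire-style construction is needed, since the finitely many conditions $|u_k(s_i)|<\eps$ and the single condition $s_i\in U$ all become valid past a common index in the directed set.
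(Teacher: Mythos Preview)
Your proof is correct and follows essentially the same line as the paper's. The paper frames the neighbourhood as $U=\eta_{E(\tau)}^{-1}(V)$ with $V$ a neighbourhood of $E(\tau)$ in $G^\fW(E(\tau))$, fixes an arbitrary $s\in V$, and approximates $s$ by a net $(\eps^\fW(t_i))$; you make the specific choice $s=E(\tau)$, which is the natural one and makes the argument slightly cleaner, but the mechanism---density of $\eps^\fW(G)$, vanishing of the Gelfand transforms on $E(\tau)G^\fW$, and separate continuity to push the net into $U$---is identical.
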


In other words, for each finite subset $\fF$ of $\fI(\tau)$, each $\tau$-neighbourhood
of $e_G$ contains ``nearly vanishing" elements of $G$ for $\fF$.

\begin{proof}
From part (I) of the proof of Theorem \ref{theo:galois} we see that
$U=\eta_{E(\tau)}^{-1}(V)=\{t\in G:\eps^\fW(t)E(\tau)\in V\}$ for some neighbourhood
$V$ of $E(\tau)$ in $G^\fW(E(\tau))\cong G_\tau$.  Now fix $s$ in $V$ and let
$(t_i)$ be a net in $G$ for which $\lim_i\eps^\fW(t_i)=s$ in $G^\fW$.  But then
$\lim_i\eps^\fW(t_i)E(\tau)=sE(\tau)=s$ and it follows that $\eps^\fW(t_i)E(\tau)$ is eventually
in $V$, and $t_i$ is eventually in $U$.  It also follows that
for each $k$ we have $\lim_iu_k(t_i)=\lim_i\hat{u}_k(\eps^\fW(t_i))=\hat{u}_k(s)=0$,
where $\hat{u}_k$ in $\fC(G^\fW)$ is the Gelfand transform of $u_k$.
\end{proof}

If $\tau\in\fT(G)\setminus\wbar{\fT}(G)$, then 
it follows from (\ref{eq:aveK}) and Corollary \ref{cor:galois1} that we get 
a linear decomposition
\[
\fW^{T(E(\tau))}(G)=\fW^\tau(G)\oplus\left\{u\in\fW^{T(E(\tau))}(G):\int_{K_\tau}u\cdot k\,dk=0\right\}
\]
where the second summand is merely a closed, translation-invariant linear space, and not generally an
algebra.  We do see, however, that $u\mapsto \int_{K_\tau}u\cdot k\,dk$ is an expectation from
$\fW^{T(E(\tau))}(G)$ onto $\fW^\tau(G)$.


\subsection{Decompositions of weakly almost periodic representations}
Let $\fX$ be a Banach space.
A weak operator continuous representation $\pi:G\to\fB(\fX)$
is {\it weakly almost periodic} if the weak operator closure $\wbar{\pi(G)}^{wo}$
in $\fB(\fX)$ is weak operator compact.  Hence  the weak operator closure of
the convex hull of $\pi(G)$, $\Aff_\pi^{wo}(G)$, is also weak operator compact, thanks to 
the Krein-\u{S}mulian theorem.

\begin{theorem}\label{theo:contdecomp}
Let $\pi:G\to\fB(\fX)$ be a unital weakly almost periodic representation
and $\tau\in\fT(G)$, and denote
\[
\fX_\tau^\pi=\{\xi\in\fX:\pi(\cdot)\xi\text{ is }\tau\text{-continuous on }G\}.
\]
Then $\fX_\tau^\pi$ is a $\pi(G)$-reducing subspace, i.e.\ there is an idempotent
$M_\tau^\pi\in\Aff_\pi^{wo}(G)$ for which $\pi(s)M_\tau^\pi=M_\tau^\pi\pi(s)$ for $s$ in $G$ and
$\fX_\tau^\pi=M_\tau^\pi\fX$.

If, moreover, $\tau\in\wbar{\fT}(G)$, then $M_\tau^\pi\in\wbar{\pi(G)}^{wo}$, and
the $\pi$-invariant complement $\fX_{\tau\perp}^\pi=(I-M_\tau^\pi)\fX$
of $\fX_\tau^\pi$ is described by
\[
\fX_{\tau\perp}^\pi=\left\{\xi\in\fX:\text{ for any }\tau\text{-neighbourhood }U\text{ of } e_G,\;
0\in\wbar{\pi(U)\xi}^w\right\}.
\]
\end{theorem}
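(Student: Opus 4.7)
The plan is to lift everything to the weakly almost periodic compactification. Because $\pi$ is weakly almost periodic, $\wbar{\pi(G)}^{wo}$ is a compact semi-topological semigroup in the weak operator topology, so the universal property of $(\eps^\fW,G^\fW)$ yields a weak$^*$-to-weak-operator continuous semigroup homomorphism $\til\pi:G^\fW\to\wbar{\pi(G)}^{wo}$ with $\til\pi\circ\eps^\fW=\pi$. For $\tau\in\wbar{\fT}(G)$ I will set $M_\tau^\pi=\til\pi(E(\tau))$; the general case $\tau\in\fT(G)$ will be reduced to this by averaging over the compact kernel $K_\tau$ of Corollary \ref{cor:galois1}.

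For $\tau\in\wbar{\fT}(G)$, idempotence of $M_\tau^\pi$ follows from $E(\tau)^2=E(\tau)$, the commutation $\pi(s)M_\tau^\pi=M_\tau^\pi\pi(s)$ follows from $E(\tau)\in\ze(G^\fW)$, and membership $M_\tau^\pi\in\wbar{\pi(G)}^{wo}$ follows from continuity of $\til\pi$ on $G^\fW=\wbar{\eps^\fW(G)}^{w*}$. To identify $M_\tau^\pi\fX=\fX_\tau^\pi$, suppose first $\xi=M_\tau^\pi\xi$. Then $\pi(s)\xi=\til\pi(\eps^\fW(s)E(\tau))\xi=\til\pi(\eta_{E(\tau)}(s))\xi$, and since $\tau=T(E(\tau))=\sig(G,\{\eta_{E(\tau)}\})$ for $\tau\in\wbar{\fT}(G)$, the map $\eta_{E(\tau)}:(G,\tau)\to G^\fW(E(\tau))$ is continuous; composing with weak-operator continuity of $\til\pi$ yields $\tau$-weak continuity of $\pi(\cdot)\xi$. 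Conversely, if $\xi\in\fX_\tau^\pi$, each coefficient $u_{\xi^*}(s)=\langle\pi(s)\xi,\xi^*\rangle$ is a $\tau$-continuous element of $\fW(G)$, hence lies in $\fW^\tau(G)=\fW(G)\cdot E(\tau)$ by Theorem \ref{theo:main}(i). Its Gelfand transform satisfies $\hat u_{\xi^*}(s)=\hat u_{\xi^*}(E(\tau)s)$ on $G^\fW$, which translates to $\langle\til\pi(s)\xi,\xi^*\rangle=\langle M_\tau^\pi\til\pi(s)\xi,\xi^*\rangle$; evaluating at $s=\eps^\fW(e_G)$ for every $\xi^*\in\fX^*$ gives $\xi=M_\tau^\pi\xi$.

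For the weakly mixing description, the forward inclusion uses any net $(t_\alpha)\subset G$ with $\eps^\fW(t_\alpha)\to E(\tau)$ in $G^\fW$; since $\eta_\tau^\fW(E(\tau))=e_\tau^\fW$ and $\tau$ coincides with the pull-back along $\eta_\tau$ of the topology on $G_\tau$, such a net is eventually in any prescribed $\tau$-neighbourhood $U$ of $e_G$, while $\pi(t_\alpha)\xi\to\til\pi(E(\tau))\xi=0$ weakly whenever $M_\tau^\pi\xi=0$, so $0\in\wbar{\pi(U)\xi}^w$. For the reverse, indexing over triples $(U,F,\eps)$ with $F$ a finite subset of $\fX^*$, the hypothesis produces a net $(t_\alpha)$ eventually in every $\tau$-neighbourhood of $e_G$ with $\pi(t_\alpha)\xi\to 0$ weakly. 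Applying the bounded operator $M_\tau^\pi$ gives $M_\tau^\pi\pi(t_\alpha)\xi\to 0$ weakly; on the other hand $M_\tau^\pi\pi(t_\alpha)\xi=\pi(t_\alpha)M_\tau^\pi\xi$ by commutation, and since $M_\tau^\pi\xi\in\fX_\tau^\pi$ by the previous paragraph, the orbit map $\pi(\cdot)M_\tau^\pi\xi$ is weakly $\tau$-continuous, forcing $\pi(t_\alpha)M_\tau^\pi\xi\to M_\tau^\pi\xi$ weakly. Hence $M_\tau^\pi\xi=0$ and $\xi\in(I-M_\tau^\pi)\fX$.

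For a general $\tau\in\fT(G)$, I would exploit the cocompact inclusion $\tau\subseteq_c T(E(\tau))$ and set $M_\tau^\pi=\int_{K_\tau}\til\pi(k)\,dm_{K_\tau}(k)$, the weak-operator integral of a probability measure supported on $\til\pi(K_\tau)\subseteq\wbar{\pi(G)}^{wo}$; this lives in $\Aff_\pi^{wo}(G)$, and Fubini combined with Haar-invariance of $m_{K_\tau}$ yields idempotence. The main obstacle in this generality is the commutation $\pi(s)M_\tau^\pi=M_\tau^\pi\pi(s)$: the subgroup $K_\tau$ is $\eps^\fW(G)$-normal but not central, so the commutation has to be obtained via invariance of $m_{K_\tau}$ under the inner automorphism $k\mapsto\eps^\fW(s)k\eps^\fW(s)^{-1}$ (well-defined and measure-preserving by the argument in part (II) of the proof of Theorem \ref{theo:galois}). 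The identification $M_\tau^\pi\fX=\fX_\tau^\pi$ then follows from the $\wbar{\fT}(G)$-case applied to $T(E(\tau))$, together with the description of $\fW^\tau(G)$ as the $K_\tau$-invariants in $\fW^{T(E(\tau))}(G)$ supplied by (\ref{eq:aveK}).
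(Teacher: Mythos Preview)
Your proof is correct and follows the same architecture as the paper: extend $\pi$ to $\til\pi=\pi^\fW$ on $G^\fW$, and set $M_\tau^\pi=\til\pi(E(\tau))$ for $\tau\in\wbar{\fT}(G)$, respectively $M_\tau^\pi=\int_{K_\tau}\til\pi(k)\,dm_{K_\tau}$ for general $\tau$. You are in fact more thorough than the paper in two places: you explicitly verify $M_\tau^\pi\fX=\fX_\tau^\pi$ (which the paper's proof leaves implicit), and for the reverse inclusion in the description of $\fX_{\tau\perp}^\pi$ you give a direct net argument exploiting $\tau$-continuity of $\pi(\cdot)M_\tau^\pi\xi$, whereas the paper argues by contrapositive using the separating functional $(M_\tau^\pi)^*\eta$. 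Your worry about commutation for general $\tau$ is well-placed but resolved exactly as you indicate---$K_\tau$ is $\eps^\fW(G)$-normal (equivalently centric, as the paper phrases it), and invariance of Haar measure under the inner automorphisms gives $\pi(s)M_\tau^\pi=M_\tau^\pi\pi(s)$.
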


\begin{proof}
We let $\pi^\fW:G^\fW\to\wbar{\pi(G)}^{wo}$ be the unique continuous extension of $\pi$.
We then let $M_\tau^\pi=\int_{K_\tau}\pi^\fW(k)\,dk$, where the integral is understood
in a weak operator sense.  Hence $M_\tau^\pi\in \Aff_\pi^{wo}(G)$ via the usual approximation
of probability measures by convex combination of point masses form a dense subset.  That
$M_\tau^\pi$ commutes with elements of $\pi(G)$ follows from centricity of $K_\tau$
in $G^\fW$.

If $\tau\in\wbar{\fT}(G)$, then $K_\tau=\{E(\tau)\}$, which shows that
$M_\tau^\pi\in\wbar{\pi(G)}^{wo}=\pi^\fW(G^\fW)$.
Further, if $\xi\in \fX_{\tau\perp}^\pi$, then for any $\eta$ in $\fX^*$, then
$0=\eta(\pi(\cdot)M_\tau^\pi\xi)=\eta(\pi(\cdot)\xi)\cdot E(\tau)$, so
$\eta(\pi(\cdot)\xi)\in\fI(\tau)$.  Hence it follows Lemma \ref{lem:bergelsonr}
that $0\in\wbar{\pi(U)\xi}^w=\wbar{\pi(U)}^{wo}\xi$.
Converesely, if $\xi\in\fX\setminus\fX_{\tau\perp}^\pi$, then we may find
$\eta\in\fX^*$ for which $1=\eta(M_\tau^\pi\xi)=\eta(M_\tau^\pi\pi(e_G)\xi)$.
Then $(M_\tau^\pi)^*\eta(\pi(\cdot)\xi)=
\eta(M_\tau^\pi\pi(\cdot)\xi)=\eta(\pi(\cdot)\xi)\cdot E(\tau)$ is a non-zero element of $\fW^\tau(G)$
and hence for any $\eps<1$, there is a $\tau$-neighbourhood of $e_G$
on which $|(M_\tau^\pi)^*\eta(\pi(\cdot)\xi)|\geq \eps$.
\end{proof}

\begin{remark}
We refer to the subspace $\fX_{\tau\perp}^\pi$ as the space of {\it $\tau$-weakly mixing}
vectors of $\pi$.  Hence if $\fX$ is a Hilbert space, then
the $\tau_{ap}$-weakly mixing vectors are the ``weakly mixing"
vectors of Bergelson and Rosenblatt \cite{bergelsonr} or of Dye \cite{dye}, or 
``flight" vectors of Jacobs \cite{jacobs}.   Notice that in this case,  we may first
specify a $\tau_{ap}$ neighbourhood which contains our nearly vanishing element of $G$.
Hence we gain a mild refinement of Bergelson and Rosenblatt's description of these vectors.
\end{remark}

We also gain a partial generalization of many of the decomposition results from \cite{deleeuwg1}, which
in turn generalizes a result in \cite{vonneumanns}.  This is shown by simply observing that $\tau_G\in
\fT(G_d)$ where $G_d$ is the discretisation of $G$, and applying Theorem \ref{theo:contdecomp}.

\begin{corollary}\label{cor:vvnsdlg}
Let $(G,\tau_G)$ be a weakly almost periodic group, and $\pi:G_d\to\fB(\fX)$ an almost periodic 
representation.  Then the space of continuous vectors, $\fX^\pi_{\tau_G}$, is a $\pi(G)$-reducing subspace. 
In particular $\fW(G)$ is a complemented translation-invariant subspace of $\fW(G_d)$.
\end{corollary}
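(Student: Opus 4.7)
The plan is to proceed exactly as the surrounding remark suggests: verify that $\tau_G \in \fT(G_d)$ and then apply Theorem~\ref{theo:contdecomp} twice, once for the general statement and once for a specific translation representation.

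For the first step, since $G$ is assumed weakly almost periodic, $\tau_G = \sig(G,\fW(G))$, so it suffices to identify $\fW(G)$ with $\fW^{\tau_G}(G_d)$. The inclusion $\fC_\beta(G) \hookrightarrow \fC_\beta(G_d) = \ell^\infty(G)$ is isometric, and Hahn--Banach ensures that the weak topology on $\fC_\beta(G)$ coincides with the one inherited from $\fC_\beta(G_d)$. Hence for $u \in \fC_\beta(G)$, weak relative compactness of $u\cdot G$ in $\fC_\beta(G)$ and in $\fC_\beta(G_d)$ are equivalent, giving $\fW(G) = \fW(G_d)\cap \fC_\beta(G) = \fW^{\tau_G}(G_d)$. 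Therefore $\tau_G = \sig(G_d,\fW^{\tau_G}(G_d))$ is a group topology on $G_d$ coarser than the discrete topology, so $\tau_G \in \fT(G_d)$.

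For the main assertion, an almost periodic representation is a fortiori weakly almost periodic, since norm-compact sets are weakly compact. Applying Theorem~\ref{theo:contdecomp} to the topological group $(G_d,\tau_G)$ with $\tau=\tau_G$ and the representation $\pi$ yields the idempotent $M^\pi_{\tau_G}\in \Aff^{wo}_\pi(G)$ commuting with $\pi(G)$ and with range $\fX^\pi_{\tau_G}$, which is precisely the claimed reducing property.

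For the ``in particular'' clause I would take $\fX=\fW(G_d)$ with the right translation representation $\rho(s)u = s\cdot u$; this is weakly almost periodic directly from the definition of $\fW(G_d)$ (equivalently, it extends continuously in the WOT to the compact semi-topological semigroup $G_d^\fW$, whose image in $\fB(\fW(G_d))$ is therefore WOT-compact). The content of the argument is the identification $\fX^\rho_{\tau_G}=\fW(G)$: evaluating at $e_G$ shows any $\rho$-continuous vector is a $\tau_G$-continuous function, and conversely, if $u\in\fW(G)$ then the orbit $\rho(G)u$ is weakly relatively compact in $\fW(G_d)$, and pointwise $\tau_G$-continuity of $s\mapsto u(ts)$ upgrades to weak continuity of $s\mapsto\rho(s)u$ because point evaluations form a separating family and a weakly relatively compact net that converges against a separating family converges weakly. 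This upgrade from pointwise to weak continuity, leveraging the weak compactness of the orbit, is the only delicate point. Once the identification $\fX^\rho_{\tau_G}=\fW(G)$ is established, Theorem~\ref{theo:contdecomp} supplies a closed, translation-invariant complement of $\fW(G)$ in $\fW(G_d)$, which is exactly the final claim.
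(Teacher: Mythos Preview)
Your proof is correct and follows precisely the route the paper indicates: the paper's own argument is the single sentence ``observe that $\tau_G\in\fT(G_d)$ and apply Theorem~\ref{theo:contdecomp}'', and you have supplied the natural details behind both halves of that sentence. Your Hahn--Banach justification of $\fW(G)=\fW^{\tau_G}(G_d)$ is exactly what the paper alludes to just after the definition of $\fT(G)$, and your treatment of the ``in particular'' clause via the translation representation on $\fW(G_d)$, together with the observation that pointwise and weak topologies agree on relatively weakly compact orbits, is the standard way to make that deduction precise.
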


\begin{remark}\label{rem:dlg}
{\bf (i)} Note that in \cite{deleeuwg1}, many theorems are stated for {\em locally almost periodic representations}, 
i.e.\ $\pi:G\to\fB(\fX)$ for which $\pi(V)$ is relatively weak operator compact for some open $V$.  Since 
our methods are global, they do not point to any means of obtaining decompositions for locally almost 
periodic representations.

We remark that for a locally compact group $(G,\tau_G)$, every locally bounded
weak operator continuous representation on a Banach space is automatically strong operator continuous,
thanks to \cite[Theo.\ 2.8]{deleeuwg1}.  In this case, the space $\fX^\pi_{\tau_G}$ of Corollary
\ref{cor:vvnsdlg} consists of strongly continuous vectors.

{\bf (ii)}  We do not generally expect $\fW(G)$ to be the complement of an E-dL-G ideal
in $\fW(G_d)$.  For example, on $\Ree_d$ we have
$\tau_\Ree\subsetneq\bar{\tau}_\Ree=\tau_\Ree\vee\tau_{ap}$.  See
\cite{ilies,ilies1} and Proposition \ref{prop:cocomloccom}, below.
\end{remark}

As a further application, we consider the quotient via a closed normal subgroup.  If $(G,\tau_G)$ is
a weakly almost periodic group and $N$ is a closed normal subgroup, we let
$q_N:G\to G/N$ denote the quotient map and
\begin{equation}\label{eq:quottop}
\tau_{G:N}=\sig(G,\fW(G/N)\circ q_N).
\end{equation}
Each element of $\fW(G/N)\circ q_N$ is constant on cosets of $N$.  Likewise if
$u$ in $\fW(G)$ is constant on costs of $N$, $\til{u}(sN)=u(s)$ is well defined on $G/N$
with $\til{u}\cdot (G/N)=u\cdot G$ relatively weakly compact. Hence
$\fW^{\tau_{G:N}}(G)=\fW(G/N)\circ q_N$ is simply the subalgebra of elements which are constant on
cosets of $N$.  The following is now an evident consequence of Theorem \ref{theo:contdecomp}.

\begin{corollary}\label{cor:laul}
If $(G,\tau_G)$ is a weakly almost periodic group with closed normal subgroup $N$, and
$\pi:G\to\fB(\fX)$ is a weakly almost periodic representation, then
\[
\fX^\pi_{\tau_{G:N}}=\{\xi\in\fX:\pi(n)\xi=\xi\text{ for each }n\text{ in }N\}
\]
is a $\pi(G)$-reducing subspace.  In particular $\fW(G/N)\circ q_N$ is a complemented
translation invariant subspace of $\fW(G)$.
\end{corollary}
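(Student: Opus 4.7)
The plan is to apply Theorem \ref{theo:contdecomp} with $\tau=\tau_{G:N}\in\fT(G)$, identifying the subspace $\fX^\pi_{\tau_{G:N}}$ with the set of $N$-fixed vectors; the reducing property then follows immediately from the theorem.

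To carry out the identification, a vector $\xi\in\fX$ lies in $\fX^\pi_{\tau_{G:N}}$ precisely when every matrix coefficient $\eta\circ\pi(\cdot)\xi$, for $\eta\in\fX^*$, is $\tau_{G:N}$-continuous. Since $\pi$ is weakly almost periodic, these coefficients belong to $\fW(G)$, and the paragraph preceding the corollary records $\fW^{\tau_{G:N}}(G)=\fW(G/N)\circ q_N$ as the subalgebra of elements constant on cosets of $N$. Thus $\xi\in\fX^\pi_{\tau_{G:N}}$ iff $\eta(\pi(sn)\xi)=\eta(\pi(s)\xi)$ for all $\eta\in\fX^*$, $s\in G$, and $n\in N$; taking $s=e_G$ and using that $\fX^*$ separates points gives $\pi(n)\xi=\xi$ for each $n\in N$, and the converse is immediate.

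For the ``in particular'' statement I would specialize to the right-translation representation $\pi\colon G\to\fB(\fW(G))$ given by $\pi(s)u(t)=u(ts)$. The verification that $\pi$ is weakly almost periodic proceeds as follows: using the Gelfand transform $\fW(G)\cong\fC(G^\fW)$, the action extends to a map $\hat{\pi}\colon G^\fW\to\fB(\fW(G))$ via $\hat{\pi}(s)\hat{u}(t)=\hat{u}(ts)$, and this extension is WOT-continuous because for each $\hat{u}\in\fC(G^\fW)$ and $\mu\in M(G^\fW)\cong\fW(G)^*$, bounded convergence applied to the $s$-continuous integrand $t\mapsto\hat{u}(ts)$ (continuous in $s$ by the semi-topological structure) shows $s\mapsto\int\hat{u}(ts)\,d\mu(t)$ is continuous on $G^\fW$. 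Hence $\wbar{\pi(G)}^{wo}\subseteq\hat{\pi}(G^\fW)$ is WOT-compact. With this $\pi$ the identification above gives $\fX^\pi_{\tau_{G:N}}=\fW(G/N)\circ q_N$, and the idempotent $M^\pi_{\tau_{G:N}}\in\Aff_\pi^{wo}(G)$ furnished by Theorem \ref{theo:contdecomp} exhibits this as a complemented subspace; being a weak-operator limit of convex combinations of right translations, $M^\pi_{\tau_{G:N}}$ commutes with left translations as well, so both $\fW(G/N)\circ q_N$ and its complement $(I-M^\pi_{\tau_{G:N}})\fW(G)$ are invariant under translations on both sides.

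The main obstacle is the weak-operator compactness check above: one must squeeze the continuity of $\hat{\pi}$ out of nothing stronger than the semi-topological structure of $G^\fW$ together with dominated convergence for measures. Once this is in place, everything else in the corollary is a matter of tracing through Theorem \ref{theo:contdecomp} and the preceding remarks on $\tau_{G:N}$.
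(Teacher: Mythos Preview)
Your plan matches the paper's: apply Theorem~\ref{theo:contdecomp} with $\tau=\tau_{G:N}$, and for the ``in particular'' specialise to the translation representation on $\fW(G)$. The identification of $\fX^\pi_{\tau_{G:N}}$ with the $N$-fixed vectors via matrix coefficients is clean and correct.

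The one weak spot is precisely the step you flag as the main obstacle. To show $\hat\pi$ is weak-operator continuous you invoke ``bounded convergence'' to pass a net limit $s_i\to s$ through the integral $\int_{G^\fW}\hat u(ts_i)\,d\mu(t)$. Dominated convergence is not available for nets: for instance, index by finite subsets $F\subset[0,1]$ ordered by inclusion and take continuous $f_F$ with $0\le f_F\le 2$, $f_F|_F=0$ and $\int_0^1 f_F\,dm=1$; then $f_F\to 0$ pointwise while the integrals do not converge to $0$. So the justification as written does not go through.

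The repair is already in the paper's toolkit. The orbit $\{\hat u(\cdot\,s):s\in G^\fW\}\subset\fC(G^\fW)$ is bounded and, being the image of the compact $G^\fW$ under a pointwise-continuous map, is pointwise compact; Grothendieck's theorem (cited in the paper as \cite[A.6]{berglundjm}) then says the pointwise and weak topologies coincide on this set, so $s\mapsto\hat u(\cdot\,s)$ is weakly continuous and composing with any $\mu\in M(G^\fW)$ gives the continuity you want. An even more direct route avoids $\hat\pi$ altogether: the map $T\mapsto(Tu)_{u\in\fW(G)}$ embeds the unit ball of $\fB(\fW(G))$ with its weak operator topology into $\prod_{u}(\fW(G),w)$, and the image of $\pi(G)$ lies in the Tychonoff-compact set $\prod_u\wbar{u\cdot G}^w$. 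This last is nothing more than the defining relative weak compactness of translates in $\fW(G)$, so the weak almost periodicity of the translation representation is essentially tautological once phrased correctly.
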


\begin{remark}\label{rem:laul}
{\bf (i)} If $G$ is locally compact, the quotient by any closed normal subgroup is also locally compact.
In this case, the corollary above is obtained in \cite{laul}.  

{\bf (ii)} Returning to weakly almost periodic groups,
notice that $\tau_{G:N}$ induces a topology $\tau_{G;N}$ on $G/N$.  
We shall see in Section \ref{sec:comparison} that $\tau_{G;N}$ may strictly coarser
than the quotient topology $\tau_{G/N}$ on $G/N$.  To the author's knowledge,
it is an open question as to whether $\tau_{G;N}$ is Hausdorff on $G/N$.
\end{remark}

\subsection{Lattice stucture of E-dL-G ideals}
We first observe the order-theoretic properties of our realisation of E-dL-G ideals.

\begin{proposition}\label{prop:antitone}
The map $\fI:\wbar{\fT}(G)\to\edlg(G)$, i.e.\ from the maximal cococpact weakly almost periodic topologies
to the set of E-dL-G ideals of $\fW(G)$, is antitone:  
\[
\fI(\tau_1)\supseteq\fI(\tau_2)\text{ if }\tau_1\subseteq\tau_2.  
\]
Furthermore, $(\fI,\fI^{-1})$ is an antitone Galois connection
on the pair $(\wbar{\fT}(G),\edlg(G))$.
\end{proposition}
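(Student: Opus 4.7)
The plan is to prove the antitone property directly from the Galois connection of Theorem \ref{theo:galois}, and then to establish that $\fI$ is in fact a bijection of $\wbar{\fT}(G)$ with $\edlg(G)$; once we know this, the antitone Galois connection claim becomes a formal consequence.

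First, to establish antitonicity, suppose $\tau_1 \subseteq \tau_2$ in $\wbar{\fT}(G)$. Theorem \ref{theo:galois} gives $E(\tau_1) \leq E(\tau_2)$ in $\ze(G^\fW)$, i.e.\ $E(\tau_1)E(\tau_2) = E(\tau_1)$. If $u \in \fI(\tau_2)$, then $u \cdot E(\tau_2) = 0$, so (using centrality and associativity in the semigroup action of $G^\fW$ on $\fW(G)$)
\[
u \cdot E(\tau_1) = u \cdot \bigl(E(\tau_2) E(\tau_1)\bigr) = \bigl(u \cdot E(\tau_2)\bigr) \cdot E(\tau_1) = 0,
\]
so $u \in \fI(\tau_1)$. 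Hence $\fI(\tau_2) \subseteq \fI(\tau_1)$, as required.

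Next I would show $\fI:\wbar{\fT}(G) \to \edlg(G)$ is a bijection. Surjectivity is the content of Theorem \ref{theo:main}(ii): every E-dL-G ideal $\fJ$ has the form $\fI(T(e))$ for some $e \in \ze(G^\fW)$, and $T(e) \in \wbar{\fT}(G)$ by definition. For injectivity, recall from the proof of Theorem \ref{theo:main}(i) that $\fI(\tau)$ consists of those elements of $\fW(G) \cong \fC(G^\fW)$ vanishing on $E(\tau)G^\fW$, so the hull of $\fI(\tau)$ equals $E(\tau)G^\fW$. If $\fI(\tau_1) = \fI(\tau_2)$ then $E(\tau_1)G^\fW = E(\tau_2)G^\fW$; writing $E(\tau_1) = E(\tau_2)r$ and using centrality and idempotence gives $E(\tau_2)E(\tau_1) = E(\tau_1)$ and symmetrically $E(\tau_1)E(\tau_2) = E(\tau_2)$, hence $E(\tau_1) = E(\tau_2)$. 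Since $\tau_i \in \wbar{\fT}(G)$ we have $\tau_i = T(E(\tau_i))$, so $\tau_1 = \tau_2$.

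Finally, having established that $\fI$ is an order-reversing bijection, $\fI^{-1}$ is automatically order-reversing as well, and the equivalence
\[
\fJ \subseteq \fI(\tau) \iff \fI^{-1}\bigl(\fI(\tau)\bigr) \subseteq \fI^{-1}(\fJ) \iff \tau \subseteq \fI^{-1}(\fJ)
\]
is immediate by applying $\fI^{-1}$ and using $\fI^{-1} \circ \fI = \id$. This is the defining condition of an antitone Galois connection on the pair $(\wbar{\fT}(G), \edlg(G))$. The step most likely to require care is the injectivity argument, which is why I would spell out the hull computation and use centrality explicitly; everything else reduces to Theorem \ref{theo:galois} and Theorem \ref{theo:main}.
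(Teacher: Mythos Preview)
Your proof is correct and follows essentially the same route as the paper's (very terse) argument: derive antitonicity from $E(\tau_1)\leq E(\tau_2)$, then invoke Theorem~\ref{theo:main} for bijectivity. You supply considerably more detail on injectivity and on the Galois-connection conclusion, which the paper leaves implicit.

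One small imprecision worth flagging: the assertion that $\fI^{-1}$ is ``automatically order-reversing'' because $\fI$ is an antitone bijection is not valid for arbitrary posets. What actually rescues it here is your own hull argument: if $\fI(\tau_1)\supseteq\fI(\tau_2)$ then the hulls satisfy $E(\tau_1)G^\fW\subseteq E(\tau_2)G^\fW$, whence $E(\tau_1)\leq E(\tau_2)$ and so $\tau_1=T(E(\tau_1))\subseteq T(E(\tau_2))=\tau_2$. In other words, the same computation you used for injectivity, run with $\subseteq$ in place of $=$, shows $\fI$ is an order \emph{anti-isomorphism}, not merely an antitone bijection; that is the statement from which the antitone Galois connection is genuinely immediate.
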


\begin{proof}
If $\tau_1\subseteq\tau_2$ in $\wbar{\fT}(G)$, then $E(\tau_1)E(\tau_2)=E(\tau_2)$.  Hence it follows
form definition that $\fI(\tau_1)\supseteq\fI(\tau_2)$.  We observe that $\fI$ is bijective,
thanks to Theorem \ref{theo:main}.
\end{proof}

We gain the following join operations  on E-dL-G ideals.

\begin{proposition}\label{prop:sumideals}
Let $\fS\subset\wbar{\fT}(G)$ be non-empty.  If $\fS$ is finite, then
\[
\fI(\bigcap\fS)=\sum_{\tau\in\fS}\fI(\tau).
\]
In particular the latter set is closed.  For general such $\fS$ we have 
\[
\fI(\bigcap\fS)=\overline{\sum_{\tau\in\fS}\fI(\tau)}.
\]
\end{proposition}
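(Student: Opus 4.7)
The plan is to interpret everything via the Gelfand isomorphism $\fW(G)\cong\fC(G^\fW)$. Writing $e_\tau=E(\tau)$ and $I(F)=\{u\in\fC(G^\fW):u|_F=0\}$ for closed $F\subseteq G^\fW$, the proof of Theorem \ref{theo:main} identifies $\fI(\tau)=I(e_\tau G^\fW)$. A direct check using centrality yields $e_1G^\fW\cap e_2G^\fW=e_1e_2G^\fW$ for any $e_1,e_2\in\ze(G^\fW)$, and each $eG^\fW=\{s\in G^\fW:es=s\}$ is closed as the equalizer of two continuous maps. Combined with Proposition \ref{prop:lattice}, this gives $\fI(\bigcap F)=I(e_FG^\fW)$ for any finite $F\Subset\fS$, where $e_F=\prod_{\tau\in F}e_\tau$.

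First I would reduce the finite case to the following elementary fact in a compact Hausdorff space $X$: for closed $F_1,F_2\subseteq X$ one has $I(F_1)+I(F_2)=I(F_1\cap F_2)$. Given $v\in I(F_1\cap F_2)$, the function on $F_1\cup F_2$ equal to $v$ on $F_1$ and $0$ on $F_2$ is well-defined (as $v|_{F_1\cap F_2}=0$) and continuous by the pasting lemma, hence extends by Tietze to some $w\in C(X)$ with $w|_{F_2}=0$, yielding $v=(v-w)+w\in I(F_1)+I(F_2)$. Iterating $n-1$ times for $F=\{\tau_1,\dots,\tau_n\}$ gives $\sum_{\tau\in F}\fI(\tau)=I(e_FG^\fW)=\fI(\bigcap F)$, which is in particular closed.

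For general $\fS$, antitonicity (Proposition \ref{prop:antitone}) and closedness of $\fI(\bigcap\fS)$ immediately yield $\overline{\sum_{\tau\in\fS}\fI(\tau)}\subseteq\fI(\bigcap\fS)$. For the reverse, separate continuity of multiplication in $G^\fW$ together with the construction (\ref{eq:infzegw}) gives $\bigcap_{F\Subset\fS}e_FG^\fW=eG^\fW$ for $e=E(\bigcap\fS)=\prod E(\fS)$. Given $v\in I(eG^\fW)$ and $\eps>0$, the decreasing net of closed sets $e_FG^\fW\cap\{|v|\geq\eps\}$ has empty intersection, so by compactness of $G^\fW$ there is a finite $F_0$ with $|v|<\eps$ throughout $e_{F_0}G^\fW$. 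Tietze yields $w\in C(G^\fW)$ extending $v|_{e_{F_0}G^\fW}$ with $\|w\|_\infty\leq\eps$; then $v-w\in I(e_{F_0}G^\fW)=\sum_{\tau\in F_0}\fI(\tau)$ by the finite case, while $\|v-(v-w)\|_\infty\leq\eps$, proving $v\in\overline{\sum_{\tau\in\fS}\fI(\tau)}$.

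The two places requiring real care, rather than routine bookkeeping, are the Tietze/pasting splitting underlying the finite case (where the fact that the glued function is defined and continuous on $F_1\cup F_2$ rests precisely on the hypothesis $v|_{F_1\cap F_2}=0$), and the compactness argument needed for the infinite case, which is what forces the closure in the second statement and explains why equality without closure cannot be expected in general.
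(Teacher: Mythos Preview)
Your proof is correct and takes a genuinely different route from the paper's.

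For the finite case, the paper exploits the built-in projection: given $u\in\fI(\tau_1\cap\tau_2)$ it writes $u=(u-u\cdot E(\tau_1))+u\cdot E(\tau_1)$ and checks directly, using $E(\tau_1)E(\tau_2)=E(\tau_1\cap\tau_2)$, that the two summands lie in $\fI(\tau_1)$ and $\fI(\tau_2)$. Your Tietze/pasting argument instead proves the generic $\fC(X)$ fact $I(F_1)+I(F_2)=I(F_1\cap F_2)$ for arbitrary closed sets in a compact Hausdorff space and then specializes. The paper's splitting is more explicit and avoids the extension theorem; yours isolates exactly which abstract property is being used.

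For the general case the contrast is sharper. The paper argues (modulo some evident typos) that for $u\in\fI(\bigcap\fS)$ the net $u-u\cdot e_F$ converges to $u$ pointwise, upgrades this to weak convergence using that pointwise and weak topologies coincide on the relatively weakly compact orbit closure $\wbar{u\cdot G}^w$, and then invokes Mazur's theorem to pass from weak to norm closure of the convex set $\sum_\tau\fI(\tau)$. Your argument stays entirely inside $\fC(G^\fW)$: a finite-intersection-property step locates $F_0$ with $|v|<\eps$ on $e_{F_0}G^\fW$, and a second Tietze extension produces the approximant. This is more self-contained and never touches the weak topology on $\fW(G)$, at the cost of invoking Tietze twice where the paper uses only the intrinsic idempotent structure.
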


\begin{proof}
For the finite case, we will show this only for $\fS=\{\tau_1,\tau_2\}$, since the general finite case follows 
from induction.
We have that $E(\tau_1\cap\tau_2)=E(\tau_1)E(\tau_2)$, by Proposition \ref{prop:lattice}.
If $u_j\in\fI(\tau_j)$ for $j=1,2$ we have 
$(u_1+u_2)\cdot E(\tau_1\cap\tau_2)=(u_1\cdot E(\tau_1))\cdot E(\tau_2)+
(u_2\cdot E(\tau_2))\cdot E(\tau_1)=0$, so $\fI(\tau_1)+\fI(\tau_2)\subseteq\fI(\tau_1\cap\tau_2)$.
Conversely, if $u\in\fI(\tau_1\cap\tau_2)$, write $u=u-u\cdot E(\tau_1)+u\cdot E(\tau_1)$.
Then $u-u\cdot E(\tau_1)\in\fI(\tau_1)$ and $u\cdot E(\tau_1)\in\fI(\tau_2)$, since
$E(\tau_1)E(\tau_2)=E(\tau_1\cap\tau_2)$.

To see the general case we first see that if $u\in\fW^{\bigcap\fS}(G)$
then we may use Proposition \ref{prop:lattice}, (\ref{eq:infzegw}) and 
to see that
\[
u=u\cdot E(\bigcap \fS)=u\cdot\prod E(\fS)
=\lim_{\substack{F\Subset S \\ F\nearrow S}} u\cdot \prod E(\fF)
\]
where each $u\cdot E(\fF)\in\sum_{\tau\in\fF}\fI(\tau)$, and convergence is pointwise
in $\fC(G^\fW)\cong\fW(G)$.  On the relatively weakly compact set $u\cdot G$ the pointwise
and weak topologies coincide (see \cite[A.6]{berglundjm}, for example), and hence by an application
of Hahn-Banach theorem we see that $u\in\overline{\sum_{\tau\in\fS}\fI(\tau)}$.
To obtain the converse inclusion we observe that
\[
\sum_{\tau\in\fS}\fI(\tau)=\bigcup_{\fF\Subset \fS}\sum_{\tau\in\fF}\fI(\tau)
=\bigcup_{\fF\Subset \fS}\fI(\bigcap \fF).  
\]
Each $\fI(\bigcap \fF)\subseteq\fI(\bigcap \fS)$, by Proposition \ref{prop:antitone}. \end{proof}

\begin{remark}\label{rem:EdLGmeet}
We notice that if $\tau_1,\tau_2\in\wbar{\fT}(G)$, then $\fI(\tau_1)\cap\fI(\tau_2)$ may be identified
as the ideal in $\fC(G^\fW)$ of elements vanishing on the ideal $E(\tau_1)G^\fW\cup E(\tau_2)G^\fW$,
which is generally smaller than the ideal $E(\tau_1\vee\tau_2)G^\fW$.  
Hence the infimum $\fI(\tau_1)\wedge\fI(\tau_2)$, qua E-dL-G ideal, does not generally 
coincide with usual meet $\fI(\tau_1)\cap\fI(\tau_2)$.  See Example \ref{ex:eeucmot}, below.
This seems to be in keeping with the fact that we 
do not have an intrinsic description of $E(\tau_1)\vee E(\tau_2)=E(\tau_1\vee\tau_2)$ in terms of
$E(\tau_1)$ and $E(\tau_2)$.  
\end{remark}

With the considerations above, for a non-empty subset $\fS$ of $\wbar{\fT}(G)$  we have lattice
relations
\[
\bigwedge_{\tau\in\fS}\fI(\tau)=\fI(\bigvee\fS)\text{ and }
\bigvee_{\tau\in\fS}\fI(\tau)=\fI(\bigcap\fS)
\]
where the latter may be understood as the usual supremum of ideals, thanks to 
Proposition \ref{prop:sumideals}.

\section{Locally precompact topologies}

\subsection{Definition and semilattice structure}\label{ssec:lc}
Let $(G,\tau_G)$ be a topological group.
We define the set of {\it pre-locally compact topologies} on $G$ by
\[
\fT_{lc}(G)=\left\{\sig(G,\{\eta\}):\begin{matrix} \eta:G\to H\text{ is a continuous homomor-} \\ 
\text{phism into a locally compact group }H\end{matrix}\right\}.
\]
If $\tau=\sig(G,\{\eta\})$, as above, then $G_\tau=\wbar{\eta(G)}\subset H$ is, up to topological isomorphism, 
the unique completion with respect to the two-sided uniformity (or left, or right uniformity)
with respect to $\tau$.  See the discussion in \cite[Sec.\ 1]{ilies}.  Hence if
$\tau\in\fT_{lc}(G)$, we let $\eta_\tau:G\to G_\tau$ denote the map into the completion.

The inclusions $\fC_0(G_\tau)\circ\eta_\tau\subseteq\fW(G_\tau)\circ\eta_\tau=\fW^\tau(G)$
show that $\fT_{lc}(G)\subseteq\fT(G)$.  

The initial topology on $G$ generated by the map
$s\mapsto (\eta_{\tau_1}(s),\eta_{\tau_2}(s))\in G_{\tau_1}\times G_{\tau_2}$ is $\tau_1\vee\tau_2$.
Hence $(\fT_{lc}(G),\vee)$ is a subsemilattice of $\fT(G)$. 

The following is suggested by  \cite[Thm.\ 2.2]{ilies}. 
Regrettably, the proof of the result, as stated there, is wrong, but corrected in
the Corrigendum \cite{ilies1}.  Part (ii), however, answers a question left open.

\begin{proposition}\label{prop:cocomloccom}
{\bf (i)} $\wbar{\fT}_{lc}(G)=T\circ E(\fT_{lc}(G))\subseteq \fT_{lc}(G)$.
Hence $\wbar{\fT}_{lc}(G)=\fT_{lc}(G)\cap\wbar{\fT}(G)$.

{\bf (ii)} If $\tau_1,\tau_2\in\wbar{\fT}_{lc}(G)$, then $\tau_1\vee\tau_2\in\wbar{\fT}_{lc}(G)$.
Thus $(\wbar{\fT}_{lc}(G),\vee)$ is a semilattice.
\end{proposition}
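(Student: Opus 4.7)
The plan is to prove (i) by pulling back local compactness through the co-compact containment $\tau \subseteq_c T(E(\tau))$ furnished by Theorem \ref{theo:galois}, and then to deduce (ii) by combining the closure of $\fT_{lc}(G)$ under pairwise joins (noted in the paragraph preceding the proposition) with Proposition \ref{prop:tarski}, which says $\wbar{\fT}(G)$ is a complete sublattice of $\fT(G)$.

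For (i), I would take $\tau \in \fT_{lc}(G)$, so that $G_\tau$ is locally compact (being a closed subgroup of a locally compact group into which $G$ maps densely). Setting $\tau' = T(E(\tau))$, Theorem \ref{theo:galois} gives $\tau \subseteq_c \tau'$, so $\eta^{\tau'}_\tau : G_{\tau'} \to G_\tau$ is a continuous, open, surjective homomorphism whose kernel $K$ is compact. The classical fact that a topological group with a compact normal subgroup and locally compact quotient is itself locally compact then forces $G_{\tau'}$ to be locally compact. Any $\rho \in \fT(G)$ satisfies $\rho = \sig(G,\{\eta_\rho\})$ -- via $\fW^\rho(G) = \fW(G_\rho)\circ\eta_\rho$ (equation (\ref{eq:WGtau})) together with weak almost periodicity of $G_\rho$ -- so we conclude $\tau' \in \fT_{lc}(G)$. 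This establishes $T\circ E(\fT_{lc}(G)) \subseteq \fT_{lc}(G)$; combined with $T\circ E(\fT_{lc}(G)) \subseteq \wbar{\fT}(G)$ by definition, we get $\wbar{\fT}_{lc}(G) \subseteq \fT_{lc}(G)\cap\wbar{\fT}(G)$. Conversely, any $\tau$ in $\fT_{lc}(G)\cap\wbar{\fT}(G)$ satisfies $\tau = T(E(\tau)) \in T\circ E(\fT_{lc}(G)) = \wbar{\fT}_{lc}(G)$.

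For (ii), given $\tau_1,\tau_2 \in \wbar{\fT}_{lc}(G) = \fT_{lc}(G)\cap\wbar{\fT}(G)$, I would note first that $\tau_1\vee\tau_2 \in \fT_{lc}(G)$ by the paragraph preceding the proposition (the join is the initial topology of the diagonal map into the locally compact group $G_{\tau_1}\times G_{\tau_2}$), and second that $\tau_1\vee\tau_2 \in \wbar{\fT}(G)$ by Proposition \ref{prop:tarski}, whence $\tau_1\vee\tau_2 \in \wbar{\fT}_{lc}(G)$. The only substantive obstacle is the local-compactness step invoked in (i): a topological group $H$ with a compact normal subgroup $N$ and locally compact $H/N$ must itself be locally compact. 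This is standard, following from properness of the quotient map $H \to H/N$ when $N$ is compact, so that the preimage of a compact identity neighborhood downstairs is a compact identity neighborhood upstairs; everything else is bookkeeping based on earlier results.
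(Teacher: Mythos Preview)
Your proof is correct and follows essentially the same route as the paper. The only difference is cosmetic: where you invoke the ``classical fact'' that a group with compact normal subgroup and locally compact quotient is locally compact (sketching it via properness of the quotient map), the paper writes out a short filter argument showing directly that the preimage $W=\eta^{-1}(V)$ of a compact identity neighbourhood $V$ in $G_\tau$ is compact in $G_{T(E(\tau))}$; the structure of the argument and the use of Proposition~\ref{prop:tarski} for part (ii) are otherwise identical.
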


\begin{proof}  
Since $G_\tau\cong G_{T(E(\tau))}/\ker\eta^{T(E(\tau))}_\tau$ is locally compact with
$K=\ker\eta^{T(E(\tau))}_\tau$ compact, $H=G_{T(E(\tau))}$ is locally compact.
Indeed, let $\eta=\eta^{T(E(\tau))}_\tau:H\to G_\tau$, $V$ a compact neighbourhood of the identity in
$G_\tau$ and $W=\eta^{-1}(V)$.  If $\fF$ is a filter on $W$, then $\eta(\fF)$ is a filter on $V$, hence
with cluster point $t$.  It follows that every neighbourhood of the compact set $\eta^{-1}(\{t\})$ meets each
element of $\fF$.  Since $\wbar{F_1\cap F_2}\subseteq\wbar{F}_1\cap\wbar{F}_2$, we see that
$\{\wbar{F}\cap\eta^{-1}(\{t\}):F\in\fF\}$ is a filter base on $\eta^{-1}(\{t\})$, and hence admits a cluster point,
which in turn must be a cluster point for $\fF$.  Thus $W$ is compact, and we have (i).

Part (ii) is immediate from part (i) and Proposition \ref{prop:tarski}.
\end{proof}

We do not generally have that $\wbar{\fT}_{lc}(G)$ is a complete semilattice (see \cite[Sec.\ 6.3]{ilies}), 
and no indication of whether or not it is generally a lattice.

We note the following dichotomy result.

\begin{proposition}\label{prop:dichotomy}
Let $\tau\in\fT(G)$.  Then either

{\bf (i)} $\tau\in\fT_{lc}(G)$, which is exactly the case in which
$G^\fW(E(\tau))$ is open in $E(\tau)G^\fW$; or

{\bf (ii)} $E(\tau)G^\fW\setminus G^\fW(E(\tau))$ is dense in $E(\tau)G^\fW$.
\end{proposition}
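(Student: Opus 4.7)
The plan is to reduce to the case $\tau = T(E(\tau))$, then analyze the intrinsic group $H := G^\fW(E(\tau))$ as a dense subgroup of the compact Hausdorff semi-topological semigroup $S := E(\tau)G^\fW$.

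First I would note that both sides of the dichotomy depend only on $E(\tau)$. Openness of $H$ in $S$ manifestly does. For pre-local-compactness, Proposition \ref{prop:cocomloccom}(i) yields the implication $\tau \in \fT_{lc}(G) \Rightarrow T(E(\tau)) \in \fT_{lc}(G)$, while the converse follows because $\tau \subseteq_c T(E(\tau))$ (Theorem \ref{theo:galois}) presents $G_\tau$ as the quotient of $G_{T(E(\tau))}$ by the compact normal subgroup $\ker \eta^{T(E(\tau))}_\tau$, hence locally compact whenever $G_{T(E(\tau))}$ is. In view of the identification $G_{T(E(\tau))} = H$ from (\ref{eq:GTEisGWE}), $\tau \in \fT_{lc}(G)$ is equivalent to $H$ being locally compact in its relative topology. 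Also, since $E(\tau)$ is a central idempotent, for each $g \in G$ the element $\eps^\fW(g)E(\tau)$ admits $\eps^\fW(g^{-1})E(\tau)$ as a two-sided inverse in $S$, whence $\eps^\fW(G)E(\tau) \subseteq H$; continuity of right multiplication by $E(\tau)$ and density of $\eps^\fW(G)$ in $G^\fW$ then give $\wbar{H} = S$.

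To prove (i), I would invoke the standard fact that a subset $A$ of a Hausdorff space $X$ that is locally compact in its relative topology is open in $\wbar{A}$: a compact relative neighborhood $V = W \cap A$ of $a \in A$ is closed in $X$, and every limit point of $A$ in the open set $W$ lies in $V \subseteq A$, so $W \cap \wbar{A} \subseteq A$ is a neighborhood of $a$ in $\wbar{A}$. Applied with $A = H$ and $X = G^\fW$, local compactness of $H$ forces $H$ to be open in $\wbar{H} = S$. Conversely, open subsets of the compact Hausdorff space $S$ are locally compact. Hence $\tau \in \fT_{lc}(G)$ if and only if $H$ is open in $S$.

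For (ii), assuming $H$ is not open in $S$, I would derive density of $S \setminus H$ by homogeneity. For each $h \in H$, left multiplication by $h$ is a homeomorphism of $S$: it is continuous by semi-topologicality of $G^\fW$, and its inverse is left multiplication by $h^{-1} \in H$, likewise continuous. If some non-empty open $U \subseteq S$ were contained in $H$, then for any $b \in H$ and any $a \in U$ the translate $ba^{-1}U$ would be an open neighborhood of $b$ in $S$ contained in $H$ (since $H$ is a group), making $H$ open in $S$ --- a contradiction. The main obstacle is the clean statement and verification of the ``dense locally compact is open'' lemma in the Hausdorff setting; once that is in place, the homogeneity step is transparent.
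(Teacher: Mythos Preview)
Your proof is correct and follows essentially the same route as the paper: reduce to the maximally cocompact case via $\tau\subseteq_c T(E(\tau))$ and the identification $G_{T(E(\tau))}=G^\fW(E(\tau))$, then use homogeneity of the dense intrinsic group inside $E(\tau)G^\fW$ to get the dichotomy. The only cosmetic difference is that for the forward direction of (i) the paper invokes $\fC_0(G_\tau)\subseteq\fW(G_\tau)$ to see $G_\tau$ is open in $G_\tau^\fW$, whereas you use the general point-set lemma that a locally compact subspace of a Hausdorff space is open in its closure---these are equivalent here and equally valid.
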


\begin{proof}
If $\tau\in\fT_{lc}(G)$, then so too is $T(E(\tau))$, by Proposition \ref{prop:cocomloccom}.
If $T(E(\tau))\in \fT_{lc}(G)$, then soo too is $G_\tau/\ker\eta^{T(E(\tau))}_\tau$.
Hence let us assume that $\tau=T(E(\tau))\in\wbar{\fT}_{lc}(G)$.
In this case $\fC_0(G_\tau)\subseteq \fW(G_\tau)$, from which it follows that
$G_\tau^\fW\setminus \eps^\fW(G_\tau)$ is closed in $G_\tau^\fW$.  Thanks to
Corollary \ref{cor:galois2} and (\ref{eq:GTEisGWE}), this is equivalent to having
that $G^\fW(E(\tau))$ is open in $E(\tau)G^\fW$.  This is case (i).

If $G^\fW(E(\tau))$ in $E(\tau)G^\fW$, contains a neighbourhood of any of its points,
then it must be open.  Hence if $G^\fW(E(\tau))$ is not open, then we get (ii).
\end{proof}

\subsection{The spine algebra}
In this section we give a topological construction of the spine algebra.  A harmonic analysis
based construction is the main topic of \cite{ilies,ilies1}.  For abelian groups, this construction, in a 
dual context, goes back to \cite{varopoulos,taylor,inoue}.

Given $\tau\in\fT_{lc}(G)$, let $\fC_0^\tau(G)=\fC_0(G_\tau)\circ\eta_\tau$.  We define the
{\em spine algebra} by
\[
\fC_0^*(G)=\wbar{\sum_{\tau\in\fT_{lc}(G)}\fC_0^\tau(G)}\subseteq \fC_\beta(G).
\]
Let us see that this is indeed an algebra.  If $\tau_1,\tau_2\in\fT_{lc}(G)$, then
multiplication on $\fC_0^{\tau_1}(G)\otimes \fC_0^{\tau_2}(G)$ factors through
the injective tensor product
\[
\fC_0^{\tau_1}(G)\check{\otimes} \fC_0^{\tau_2}(G)
\cong \fC_0(G_{\tau_1})\check{\otimes} \fC_0(G_{\tau_2})
\cong \fC_0(G_{\tau_1}\times G_{\tau_2})
\]
as the map of restricting to the diagonal, i.e.\ if $u_j\in\fC_0(G_{\tau_j})$, $j=1,2$, then for $s$
in $G$ we have $u_1\circ\eta_{\tau_1}\cdot u_2\circ\eta_{\tau_2}(s)=
u_1\otimes u_2(\eta_{\tau_1}(s),\eta_{\tau_2}(s))$.  This defines an element of
$\fC_0^{\tau_1\vee\tau_2}(G)$.  Comparing with the join on $\fT_{lc}(G)$, we see that
\begin{equation}\label{eq:comult}
\fC_0^{\tau_1}(G)\fC_0^{\tau_2}(G)\subseteq \fC_0^{\tau_1\vee\tau_2}(G).
\end{equation}

In order to learn about the structure of the spine algebra, we shall use the following.

\begin{lemma} \label{lem:laul}
{\rm \cite[Lem.\ 12]{laul}} If $H$ is a locally compact group, then all non-zero translation invariant
C*-subalgebras of $\fC_0(H)$ are of the form $\fC_0(H/K)\circ q_K$, where
$K$ is a compact normal subgroup and $q_K:H\to H/K$ is the quotient map.
\end{lemma}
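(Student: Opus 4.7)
My plan is to extract a compact normal subgroup $K$ of $H$ canonically from $\fA$, and then recover $\fA=\fC_0(H/K)\circ q_K$ via Stone--Weierstrass.  Define
\[
K=\{s\in H: f(s)=f(e_H)\text{ for all }f\in\fA\},
\]
which is closed by continuity.  Using the two-sided translation invariance of $\fA$, I would verify that $K$ is a normal subgroup: if $k_1,k_2\in K$ and $f\in\fA$, then the translate $L_{k_1^{-1}}f\in\fA$ evaluated at $k_2$ gives $f(k_1k_2)=f(k_1)=f(e_H)$, so $k_1k_2\in K$; analogous manipulations using right translations and the conjugation $R_{h^{-1}}L_{h^{-1}}f$ establish closure under inversion and under $k\mapsto hkh^{-1}$.

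The crucial step is compactness of $K$.  Here the non-triviality of $\fA$ combined with translation invariance yields some $f_0\in\fA$ with $f_0(e_H)\neq 0$ (translate any nonzero element appropriately), and then
\[
K\subseteq\{s\in H:|f_0(s)|\geq|f_0(e_H)|\},
\]
which is compact because $f_0\in\fC_0(H)$.  Being closed, $K$ is itself compact.

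Now each $f\in\fA$ is constant on $K$-cosets (evaluating $L_{s^{-1}}f\in\fA$ at $k\in K$ yields $f(sk)=f(s)$), so descends to $\tilde f:H/K\to\Cee$ with $f=\tilde f\circ q_K$; moreover $\tilde f\in\fC_0(H/K)$ since $\{|\tilde f|\geq\eps\}=q_K(\{|f|\geq\eps\})$ is the continuous image of a compact set.  This gives $\fA\subseteq\fC_0(H/K)\circ q_K$.  For the reverse inclusion, the closed $\ast$-subalgebra $\tilde\fA=\{\tilde f:f\in\fA\}$ of $\fC_0(H/K)$ separates the points of $H/K$ (distinguishing $sK\neq tK$ reduces to finding $f\in\fA$ with $f(s^{-1}t)\neq f(e_H)$, whose translate separates $s$ from $t$) and vanishes nowhere (translate $f_0$ to an arbitrary coset).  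The locally compact Stone--Weierstrass theorem then forces $\tilde\fA=\fC_0(H/K)$, completing the proof.

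The main obstacle is the compactness of $K$: this is the only step that genuinely exploits the $\fC_0$-hypothesis, and requires coupling it to the non-triviality of $\fA$ via translation in order to produce a test function bounded away from zero on all of $K$.  Everything else is routine C*-algebraic descent and classical Stone--Weierstrass.
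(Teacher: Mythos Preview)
The paper does not supply its own proof of this lemma: it is quoted verbatim from Lau and Losert \cite[Lem.~12]{laul} and used as a black box in the discussion of the spine algebra. So there is no in-paper argument against which to compare.

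Your proposed argument is correct and is essentially the standard one. The key steps---extracting $K$ as the stabilizer of evaluation at $e_H$, obtaining compactness of $K$ from a single nonvanishing test function in $\fC_0(H)$, descending $\fA$ to $H/K$, and closing with Stone--Weierstrass---are exactly the expected route, and you have identified the one genuinely non-formal point (compactness of $K$). Two minor remarks: your normality argument silently uses \emph{two-sided} translation invariance of $\fA$, which is indeed how ``translation invariant'' is meant throughout the paper, but it is worth making explicit; and your translation conventions drift slightly between steps (e.g.\ $L_{s^{-1}}f$ is used with two different implicit sign conventions), so in a final write-up fix one convention and stick to it. Neither point affects the validity of the argument.
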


An an immediate consequence we see that for $\tau_1,\tau_2$ in $\fT_{lc}(G)$ that
\begin{equation}\label{eq:cocontain}
\fC_0^{\tau_1}(G)\subseteq \fC_0^{\tau_2}(G)\quad\Leftrightarrow\quad
\tau_1\subseteq_c\tau_2.
\end{equation}
Furthermore, we see that 
\begin{align}\label{eq:cointersect}
\fC_0^{\tau_1}(G)&\cap \fC_0^{\tau_2}(G)\not=\{0\} \quad\Leftrightarrow \\
&T\circ E(\tau_1)=T\circ E(\tau_2)=T\circ E(\tau_1\vee\tau_2). \notag
\end{align}
Indeed, if non-zero, the intersection algebra is isomorphic to the algebra
$\fC_0(G_{\tau_1}/K_1)\cong\fC_0(G_{\tau_2}/K_2)$ for compact subgroups $K_j\subseteq G_{\tau_j}$,
$j=1,2$, and hence corresponds to $\tau_3\subseteq_c\tau_j$.  But then by properties of the closure 
operator $T\circ E$ we have $T(E(\tau_3))=T(E(\tau_j))$, for $j=1,2$.  Hence we find, using Proposition
\ref{prop:lattice} that each $T\circ E(\tau_j)=T\circ E(\tau_1)\vee T\circ E(\tau_2)=T\circ E(\tau_1\vee\tau_2)$.
Conversely, the latter condition of (\ref{eq:cointersect}) gives that each $\tau_j\subseteq_c\tau_1\vee\tau_2$,
and the intersection is naturally isomorphic to
$\fC_0(G_{\tau_1\vee\tau_2}/( \ker\eta^{\tau_1\vee\tau_2}_{\tau_1}\cdot\ker\eta^{\tau_1\vee\tau_2}_{\tau_1}))$.

Combining (\ref{eq:cocontain}) and (\ref{eq:cointersect}) we  have
\[
\fC_0^*(G)=\,\,\,u\text{-}\!\!\!\bigoplus_{\tau\in\wbar{\fT}_{lc}(G)}\fC_0^\tau(G)
\]
where the direct sum $u\text{-}\bigoplus$ indicates that the direct sum is completed in the uniform norm.
The multiplication between the factors is indicated by (\ref{eq:comult}).  Hence this is
a commutative C*-algebra, graded over the semilattice $(\wbar{\fT}_{lc}(G),\vee)$.
Similar examples are considered in \cite{mageira}.

The spectrum of $\fC_0^*(G)$ is a certain {\em spine compactification} $G^*$ of $G$.
This object is described in \cite{ilies}, and a description of its topology given there, though with
a correction in \cite{ilies1}.  This is a Clifford semigroup, of which specialized versions of this may be seen in 
\cite{berglund} and \cite{mageira}.  Let us note that
\[
G^*=\bigsqcup_{\tau\in\what{\fT}_{lc}(G)}G_\tau
\]
where 
\begin{equation}\label{eq:hatTlc}
\what{\fT}_{lc}(G)=\{\bigvee\fS:\fS\subseteq \wbar{\fT}_{lc}(G)\} 
\end{equation}
is the lattice completion
of $\wbar{\fT}_{lc}(G)$.  We note that for each $\fS\subseteq \wbar{\fT}_{lc}(G)$, the set
$\fS'=\{\tau\in \wbar{\fT}_{lc}(G):\tau\subseteq\bigvee\fS\}$ is both hereditary and directed, and the group
$G_{\bigvee\fS}\cong G_{\fS'}$, where the latter notation is that used in \cite{ilies}.  
If $\tau_1,\tau_2\in\wbar{\fT}_{lc}$, then $\tau_1\wedge\tau_2=\bigvee\{\tau\in\wbar{\fT}_{lc}(G):
\tau\subseteq\tau_1\cap\tau_2\}$.  Then the multiplication on $G^*$
is given as follows:  if $s\in G_{\tau_1}$ and $t\in G_{\tau_2}$, then
\[
st=\eta^{\tau_1}_{\tau_1\wedge\tau_2}(s)\eta^{\tau_2}_{\tau_1\wedge\tau_2}(t)\in G_{\tau_1\wedge\tau_2}
\]
where $\eta^\tau_\tau=\id_{G_\tau}$.

\subsection{On totally minimal groups}
A locally compact group $(G,\tau_G)$ is called {\it totally minimal} if each quotient
by a closed normal subgroup admits a unique Hausdorff group topology.
Connected totally minimal groups are characterized by Meyer~\cite{mayer1}.

For totally minimal locally compact $G$ we let $\fN(G)$ denote the lattice of closed normal subgroups
and it is evident that we have
\[
\fT(G)=\{\tau_{G:S}:S\in\fN(G)\}=\fT_{lc}(G)
\]
where $\tau_{G:S}$ is defined as in (\ref{eq:quottop}).  We define a {\it topological 
commensurability relation} on closed normal subgroups:  $S\sim S'$ if
both $S/(S\cap S')$ and $S'/(S\cap S')$ are compact.  We note that
\[
\underline{S}=\bigcap\{S':S'\sim S\}
\]
is closed and normal.  The quotient $S/\underline{S}$ is a closed subgroup
of $G/\underline{S}$, which admits the unique topology arising from the ``diagonal" embedding
\[
x\underline{S}\mapsto (x(S\cap S'))_{\substack{S'\in\fN(G) \\ S'\sim S}}:
G/\underline{S}\to \prod_{\substack{S'\in\fN(G) \\ S'\sim S}}G/(S'\cap S)
\] 
and it follows that $S/\underline{S}$ is compact.
Thus each equivalence class admits a minimal element and we let $\underline{\fN}(G)$ denote the set
of these minimal representatives.  We see, then, that
\[
\wbar{\fT}(G)=\{\tau_{G:S}:S\in\underline{\fN}(G)\}=\wbar{\fT}_{lc}(G).
\]
We note that $\tau_G=\tau_{G:\{e\}}$ and $\tau_{ap}=\tau_{G:C}$, where $C$ is the smallest
co-compact normal subgroup in $G$.  

Furthermore, for $S,S'$ in $\fN(G)$ we have
\begin{equation}\label{eq:latTGmin}
\tau_{G:S}\vee\tau_{G:S'}=\tau_{G:S\cap S'}\text{ and }
\tau_{G:S}\cap\tau_{G:S'}=\tau_{G:\wbar{SS'}}.
\end{equation}
The pair of the map $S\mapsto \tau_{G:S}$ and its inverse are an antitone Galois connection for the pair
$(\fN(G),\fT(G))$, and hence for the pair $(\underline{\fN}(G),\wbar{\fT}(G))$, as follows Tarski's
fixed point theorem (as referenced in Proposition \ref{prop:tarski}).

When $G$ is connected, it follows from \cite[Theo.\ 4.5]{mayer1} that
$G$ is totally minimal exactly when
\[
\fW(G)=\,\,u\text{-}\!\!\!\bigoplus_{S\in\underline{\fN}(G)}\fC_0(G/S)\circ q_S
=\,\,u\text{-}\!\!\!\bigoplus_{S\in\underline{\fN}(G)}\fC_0^{\tau_{G:S}}(G)=\fC^*_0(G).
\]
We observe, then, that for $S$ in $\underline{\fN}(G)$ we have
\[
\fW^{\tau_{G:S}}(G)=\,\,u\text{-}\!\!\!\bigoplus_{\substack{S'\in\underline{\fN}(G) \\ S'\supseteq S}}\fC_0^{\tau_{G:S}}(G)
\text{ and }
\fI(\tau_{G:S})=\,\,u\text{-}\!\!\!\bigoplus_{\substack{S'\in\underline{\fN}(G) \\ S'\not\supset S}}\fC_0^{\tau_{G:S}}(G).
\]

\begin{example}\label{ex:eeucmot}
The group $G=\Cee^2\rtimes\Tee$, with multiplication 
\[
(z,w,t)(z',w',t')=(z+tz',w+tw',tt')
\]
is minimal,
as indicated in \cite[Rem.\ 18]{mayer2}.  Let $\fL$ denote the family of one-dimensional $\Cee$-subspace of 
$\Cee^2$. Then
\[
\fN(G)=\underline{\fN}(G)=\{\{e\},L\rtimes\{1\}\,(L\in\fL),\Cee^2\rtimes\{1\}\}.
\]

If $L\not=L'$ in $\fL$ then (\ref{eq:latTGmin}) provides
$\tau_{G:L\rtimes\{1\}}\vee \tau_{G:L'\rtimes\{1\}}=\tau_G$.
Hence we conclude for the meet of E-dL-G ideals that
\[
\fI(\tau_{G:L\rtimes\{1\}})\wedge\fI(\tau_{G:L'\rtimes\{1\}})=\{0\}
\] 
while
\[
\fI(\tau_{G:L\rtimes\{1\}})\cap\fI(\tau_{G:L'\rtimes\{1\}})=
\left(\,\,u\text{-}\!\!\!\!\!\bigoplus_{M\in\fL\setminus\{L,L'\}}\fC_0^{\tau_{G:M\rtimes\{1\}}}(G)\right)
\oplus_u\fC_0(G)
\]
as suggested in Remark \ref{rem:EdLGmeet}.
\end{example}

\section{Unitarizable topologies}

\subsection{Definition and properties}
Given a group $G$ and a group topology $\tau$ on $G$ we let $\pd^\tau(G)$ denote the set
of $\tau$-continuous positive definite functions.  We let the {\it unitarizable topologies}
on a topological group $(G,\tau_G)$ be given by
\[
\wbar{\fT}_u(G)=\{\tau\subseteq\tau_G:\tau\text{ is a group topology, with }\tau=\sig(G,\pd^\tau(G))\}.
\]
Let us justify the term ``unitarizable".  Given $u$ in $\pd^\tau(G)$ we review the Gelfand-Naimark-Segal
construction on $u$.  On $\Cee[G]$ we consider the positive Hermitian form
\[
\left[\sum_{i=1}^n\alp_is_i,\sum_{j=1}^m\beta_jt_j\right]_u=\sum_{i=1}^n\sum_{j=1}^m\alp_i\wbar{\beta_j}u(t_j^{-1}s_i)
\]
and let $\fK_u=\{\xi\in\Cee[G]:[\xi,\xi]_u=0\}$.  We induce a form, hence a Euclidean norm on $\Cee[G]/\fK_u$, and
let $\fH_u$ be the completion of the latter space.  We define $\pi_u:G\to \un(\fH_u)$ by extending
$\pi_u(s)\xi=\sum_{i=1}^n\alp_iss_i+\fK_u$, where $\xi=\sum_{i=1}^n\alp_is_i+\fK_u$.  Then for
$\eta=\sum_{j=1}^m\beta_jt_j+\fK_u$ we have $\langle\pi(s)\xi,\eta\rangle
=\sum_{i=1}^n\sum_{j=1}^m\alp_i\wbar{\beta_j}u(t_j^{-1}ss_i)$.  Hence it is evident that
$\pi_u$ is $\tau$-weak-operator continuous.
Let $\xi_u=e+\fK_u$, so $u=\langle \pi_u(\cdot)\xi_u,\xi_u\rangle$.  We then let
\[
\vpi_\tau=\bigoplus_{u\in\pd^\tau(G)}\pi_u\text{ on }
\fH_\tau=\,\,\ell^2\text{-}\!\!\!\bigoplus_{u\in\pd^\tau(G)}\fH^\pi_\tau
\]
and it is straightforward to check, using the polarization identity, (\ref{eq:polarization}) below, that
\[
\sig(G,\{\vpi_\tau\})=\sig(G,\pd^\tau(G)).
\]
Hence $\vpi_\tau:(G,\tau)\to\un(\fH_\tau)$ is a homeomorphism onto its range.  We use the well-known fact 
that $so=wo$ on $\un(\fH_\tau)$, as is immediate from the identity
\begin{equation}\label{eq:woso}
\|u\xi-v\xi\|^2=2\|\xi\|^2-2\mathrm{Re}\langle u\xi,v\xi\rangle
\end{equation}
for $u,v$ in $\un(\fH_\tau)$, $\xi$ in $\fH_\tau$.

If $\tau_G\in\fT_u(G)$, then we call $(G,\tau_G)$ a {\it unitarizable group}.  For such a group we let
$\vpi=\vpi_{\tau_G}$ on $\fH_G=\fH_{\tau_G}$
be the {\it universal unitary representation}.  We shall assume that $(G,\tau_G)$ is unitarizable
for the rest of this section, and write $\pd(G)=\pd^{\tau_G}(G)$.  
We let $G^\vpi=\wbar{\vpi(G)}^{wo}$ (weak operator closure) which is a compact semi-topological
semigroup.  Notice, then, that $\pd(G)\subset\fC(G^\vpi)\circ\vpi\subseteq\fW(G)$, so
$\fT_u(G)\subseteq\fT(G)$.

We note that $\fT_u(G)$ is a complete sublattice of $\fT(G)$.  Indeed, let
$\fS\subseteq\fT_u(G)$.  Then
\[
\vpi_\fS=\bigoplus_{\tau\in\fS}\vpi_\tau\text{ on }\fH_\fS=\ell^2\text{-}\bigoplus_{\tau\in\fS}\fH_\tau
\]
satisfies $\bigvee\fS=\sig(G,\{\vpi_\fS\})\in\fT_u(G)$.    Just as with weakly almost periodic
topologies we see that
\[
\bigcap\fS=\bigcap_{\tau\in\fS}\sig(G,\pd^\tau(G))=\sig(G,\pd^{\bigcap\fS}(G))
\]
so $\bigcap\fS\in\fT_u(G)$ too.

 We consider central idempotents
\[
\ze(G^\vpi)=\{p\in G^\vpi:p^2=p\text{ and }\vpi(s)p=p\vpi(s)\text{ for each }s\text{ in }G\}.
\]
Since $G^\vpi$ is contained in the weak-operator compact unit ball of bounded operators $\fB(\fH_G)$,
$\ze(G^\vpi)$ consists of projections: $p=p^2=p^*$.

\begin{theorem}\label{theo:galoisU}
There exist two maps
\[
T_u:\ze(G^\vpi)\to\fT_u(G)\text{ and }P:\fT_u(G)\to\ze(G^\vpi)
\]
which satisfy the relations
\begin{align*}
&T_u(p_1)\subseteq T(p_2)\text{ if }p_1\leq p_2\text{ in }\ze(G^\vpi); \\
&P(\tau_1)\leq P(\tau_2)\text{ if }\tau_1\subseteq\tau_2\text{ in }\fT_u(G); \\
&P(\tau_1)=E(\tau_2)\text{ if }\tau_1\subseteq_c\tau_2\text{ in }\fT_u(G)\text{; and} \\
&P\circ T_u=\id_{\ze(G^\vpi)}\text{ while }\tau\subseteq_c T_u\circ P(\tau).
\end{align*}
Hence if we let $\wbar{\fT}_u(G)=T_u(\ze(G^\vpi))$, then $P:\wbar{\fT}_u(G)\to\ze(G^\fW)$ is an order-preserving
bijection with inverse $T$.  

Furthermore, $\wbar{\fT}_u(G)$ is a complete sublattice of $\fT_u(G)$.
\end{theorem}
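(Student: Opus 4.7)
The plan is to mirror the construction of Theorem \ref{theo:galois} throughout, replacing the weakly almost periodic compactification $G^\fW$ by the unitary compactification $G^\vpi$, idempotents in $G^\fW$ by the automatically self-adjoint projections in $G^\vpi$, and weakly almost periodic functions by positive definite functions. A preliminary observation I shall use freely is that $G^\vpi$ is closed under adjoints (because $\vpi(s)^* = \vpi(s^{-1})$ and the adjoint is weak-operator continuous on bounded sets) and that any contractive idempotent in $\fB(\fH_G)$ is automatically self-adjoint; hence $\ze(G^\vpi)$ consists of central orthogonal projections, as already noted in the excerpt.

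To define $T_u$, I take $p \in \ze(G^\vpi)$, set $\eta_p(s) = p\vpi(s)$, and declare $T_u(p) = \sig(G, \{\eta_p\})$. Centrality of the projection $p$ makes $\vpi_p(\cdot) = p\vpi(\cdot)|_{p\fH_G}$ a unitary representation generating the same initial topology, whose diagonal matrix coefficients $\langle \vpi_p(\cdot)\xi, \xi \rangle$ are $T_u(p)$-continuous positive definite functions on $G$ that, by polarization, linearly span the generators of $T_u(p)$. This gives $T_u(p) \in \fT_u(G)$, and the monotonicity $T_u(p_1) \subseteq T_u(p_2)$ for $p_1 \leq p_2$ is immediate from the factorisation $\eta_{p_1} = p_1 \eta_{p_2}$.

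To define $P$, I invoke the universal property of $\vpi$: for $\tau \in \fT_u(G)$ we have $\pd^\tau(G) \subseteq \pd(G)$, so the universal unitary representation $\vpi_\tau$ of $(G,\tau)$ factors through $\vpi$, extending to a continuous surjective semigroup homomorphism $\vpi_\tau^\vpi: G^\vpi \to G_\tau^\vpi$, where $G_\tau^\vpi := \wbar{\vpi_\tau(G)}^{wo}$. Then $S_\tau^\vpi = (\vpi_\tau^\vpi)^{-1}(I)$ is a closed subsemigroup of $G^\vpi$, and Theorem \ref{theo:ruppert1} provides it with a unique minimum idempotent, which I take to be $P(\tau)$. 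The conjugation argument of step (II) in the proof of Theorem \ref{theo:galois} applies verbatim to show $P(\tau)$ is central, hence a central projection.

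The four Galois-type relations then follow line by line from the corresponding verifications in Theorem \ref{theo:galois}, once these identifications are made; in particular, the analogue of Corollary \ref{cor:galois1} yields $K(S_\tau^\vpi) \cong \ker(\vpi_\tau^\vpi|_{P(\tau)G^\vpi})$, and the co-compact containment $\tau \subseteq_c T_u \circ P(\tau)$ drops out of the same Cauchy-filter argument. The completeness of $\wbar{\fT}_u(G)$ as a sublattice of $\fT_u(G)$ is then obtained from Tarski's fixed point theorem exactly as in Proposition \ref{prop:tarski}: $T_u \circ P$ is a monotone closure operator on the complete lattice $\fT_u(G)$ (whose completeness was established just before the theorem statement), whose fixed-point set is precisely $\wbar{\fT}_u(G)$. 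The main point to monitor is that every appeal to the compact-semi-topological-semigroup machinery (Theorems \ref{theo:ruppert} and \ref{theo:ruppert1}) transfers to $G^\vpi$ without modification, and that the adjoint-closedness of $G^\vpi$ is compatible with, rather than disruptive of, the Galois-style analysis.
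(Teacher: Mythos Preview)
Your proposal is correct and follows essentially the same approach as the paper: both argue that the proof of Theorem~\ref{theo:galois} carries over verbatim once $G^\fW$ is replaced by $G^\vpi$, with the key adaptations being that $\vpi_\tau$ extends to $G^\vpi$ (since it is a subrepresentation of $\vpi$), that the minimal-idempotent and centrality arguments transfer to closed subsemigroups of $G^\vpi$, and that the complete-sublattice assertion follows from Tarski's fixed point theorem as in Proposition~\ref{prop:tarski}. If anything, your sketch is slightly more explicit than the paper's own proof, which is content to say ``it is a straightforward exercise to recreate the proof of Theorem~\ref{theo:galois}'' and then remarks only on the extension $\til{\vpi}_\tau$, the completeness of the intrinsic unitary groups, and the averaging projection $\int_K\vpi_{\tau_2}(k)\,dk$ in the co-compact case.
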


\begin{proof}
It is a straightforward exercise to recreate the proof of Theorem \ref{theo:galois} in this context.
We remark on some details.
For the construction of $P$, we need to observe that $\vpi_\tau:G\to G^{\vpi_\tau}=
\wbar{\vpi_\tau(G)}^{wo}$ admits a unique continuous extension $\til{\vpi}_\tau:G^\vpi\to
G^{\vpi_\tau}$.  This is evident as $\vpi_\tau$ is a subrepresentation of $\vpi$.  
The intrinsic groups $G_\tau=G^{\vpi_\tau}(I)$ (where $I=\vpi_\tau(e_G)$)
are groups of unitaries, complete with respect to the two-sided uniformity
thanks to Theorem \ref{theo:ruppert}.  [In fact, a more elementary proof is available.
If $(v_i)$ is a Cauchy net in $G^{\vpi_\tau}(I)$, let $v$ in $G^{\vpi_\tau}$ be a 
cluster point.  Use (\ref{eq:woso}) to see that $v*v=I=vv^*$ and that this cluster point is unique.]
If $\tau_1\subseteq\tau_2$
in $\fT_u(G)$, we let $\eta^{\tau_2}_{\tau_1}:G_{\tau_2}\to G_{\tau_1}$ be the unique
continuous homomorphism extending the induced map from $G/\wbar{\{e_G\}}^{\tau_2}$ to
 $G/\wbar{\{e_G\}}^{\tau_1}$.  If
$\tau_1\subseteq_c\tau_2$ in $\fT_u(G)$, then $K=\ker\eta^{\tau_2}_{\tau_1}$, then
averaging over $K$ in $G^{\vpi_{\tau_2}}$ is achieved by compression by the central
projection $\int_K \vpi_{\tau_2}(k)\,dk$.

That $\wbar{\fT}_u(G)$ is a complete sublattice of $\fT_u(G)$ follows from the fact that 
$(P,T_u)$ is a Galois connection for the pair of partially ordered sets $(\fT_u(G),\ze(G^\vpi))$,
and Proposition \ref{prop:tarski}.
\end{proof}

The following is proved similarly as Corollary \ref{cor:galois2}.

\begin{corollary}\label{cor:galoisU1}
Let for $\tau$ in $\wbar{\fT}_u(G)$, $G^{\vpi_\tau}=\wbar{\vpi_\tau(G)}^{wo}$.
Then we have an isomorphism of semitopological semigroups
\[
G^{\vpi_\tau}=P(\tau)G^\vpi.
\]
\end{corollary}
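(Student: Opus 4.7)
The plan is to mirror the proof of Corollary \ref{cor:galois2}(i), adapted to the unitary setting, by constructing two mutually inverse continuous semigroup homomorphisms between $P(\tau)G^\vpi$ and $G^{\vpi_\tau}$. The role played there by the universal property of $G^\fW$ and by the idempotent $E(\tau)$ is taken here by the central projection $P(\tau)$ and its characterisation from Theorem \ref{theo:galoisU}.

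First, I would check that $P(\tau)G^\vpi$ is a compact semi-topological semigroup: since $P(\tau)$ is a central projection in $G^\vpi$, we have $P(\tau)G^\vpi = P(\tau)G^\vpi P(\tau)$, and the map $s\mapsto P(\tau)s$ is a continuous retraction of $G^\vpi$ onto it. Consequently $\rho:G\to P(\tau)G^\vpi$, $\rho(s)=P(\tau)\vpi(s)$, is a continuous semigroup homomorphism with dense range, and (by centrality) is unitarily equivalent to the compression of $\vpi$ to $P(\tau)\fH_G$.

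Second, since $\vpi_\tau$ is a subrepresentation of $\vpi$ (as used in the construction of $P$ in the proof of Theorem \ref{theo:galoisU}), restriction of matrix coefficients yields a continuous surjective semigroup homomorphism $\til{\vpi}_\tau:G^\vpi\to G^{\vpi_\tau}$. By the defining property of $P(\tau)$ analogous to (\ref{eq:Etau}), we have $\til{\vpi}_\tau(P(\tau))=I$, the identity of $G^{\vpi_\tau}$, and thanks to centrality the map $\til{\vpi}_\tau$ factors through multiplication by $P(\tau)$, giving a continuous surjective semigroup homomorphism $\phi:P(\tau)G^\vpi\to G^{\vpi_\tau}$ with $\phi(P(\tau)\vpi(s))=\vpi_\tau(s)$.

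For the reverse direction, I would identify $P(\tau)\fH_G$ with the carrier space of $\vpi_\tau$: on this subspace the compression $\rho(s)=P(\tau)\vpi(s)P(\tau)$ agrees with $\vpi_\tau(s)$ via the equality of matrix coefficients $\langle\rho(s)\xi,\eta\rangle=\langle\vpi(s)\xi,\eta\rangle$ for $\xi,\eta\in P(\tau)\fH_G$. Under this identification $\wbar{\rho(G)}^{wo}=P(\tau)G^\vpi$ and $\wbar{\vpi_\tau(G)}^{wo}=G^{\vpi_\tau}$ coincide, and the resulting continuous homomorphism $\psi:G^{\vpi_\tau}\to P(\tau)G^\vpi$ with $\psi(\vpi_\tau(s))=P(\tau)\vpi(s)$ is inverse to $\phi$ on the dense set $\vpi_\tau(G)$, hence everywhere by continuity.

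The main obstacle is the identification of the range of $P(\tau)$ with the carrier space $\fH_\tau$ of $\vpi_\tau$. This should follow from the minimality characterisation of $P(\tau)$ as the smallest projection in $\ze(G^\vpi)$ whose compression kills the $\tau$-continuity obstruction in $\vpi$, which is exactly what specifies the $\vpi_\tau$-subspace. Once this identification is in hand, the rest is a routine transfer of the argument of Corollary \ref{cor:galois2}(i).
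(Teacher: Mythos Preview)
Your forward map $\phi:P(\tau)G^\vpi\to G^{\vpi_\tau}$, obtained by factoring $\til{\vpi}_\tau$ through multiplication by $P(\tau)$, is correct and is exactly the analogue of the first half of the proof of Corollary~\ref{cor:galois2}(i).

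The gap is in your reverse direction. The identification of $P(\tau)\fH_G$ with $\fH_\tau$ as subspaces of $\fH_G$ is in general false. By construction $\fH_\tau=\ell^2\text{-}\bigoplus_{u\in\pd^\tau(G)}\fH_u$, whereas $P(\tau)\fH_G$ is cut out by a projection lying in $\wbar{\vpi(G)}^{wo}$. The condition $\til{\vpi}_\tau(P(\tau))=I$ gives only the inclusion $\fH_\tau\subseteq P(\tau)\fH_G$: for $u\in\pd(G)\setminus\pd^\tau(G)$ the cyclic space $\fH_u$ may well contain nonzero $\tau$-continuous vectors, and these lie in $P(\tau)\fH_G$ but not in $\fH_\tau$. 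Thus your assertion that ``$\rho(s)$ agrees with $\vpi_\tau(s)$ on $P(\tau)\fH_G$'' is not well-posed, and the obstacle you flag is genuine, not merely technical.

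The paper's route (by direct analogy with Corollary~\ref{cor:galois2}(i)) avoids this entirely. One first records the unitary analogue of (\ref{eq:GTEisGWE}): for $\tau\in\wbar{\fT}_u(G)$ the completion $G_\tau$ is realised as the intrinsic group $G^\vpi(P(\tau))$. The elements of $G^\vpi(P(\tau))$ act as unitaries on $P(\tau)\fH_G$, so $s\mapsto s|_{P(\tau)\fH_G}$ is a continuous unitary representation of $G_\tau$ whose weak-operator closure is $P(\tau)G^\vpi$. Since $(\vpi_\tau,G^{\vpi_\tau})$ is universal among semitopological compactifications of $G_\tau$ arising from continuous unitary representations, there is a continuous surjective homomorphism $\psi:G^{\vpi_\tau}\to P(\tau)G^\vpi$ intertwining the dense images of $G$. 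Then $\phi\circ\psi$ and $\psi\circ\phi$ are the identity on dense subsets, hence everywhere, and no comparison of Hilbert spaces is required.
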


It can be checked for $\tau$ in $\fT_u(G)$ that $P(\tau)=\vpi^\fW(E(\tau))$.  However, 
the following suggests that Theorem \ref{theo:galoisU} may not be thus witnessed as a straightforward
corollary of Theorem \ref{theo:galois}.

\begin{question}
If $\tau\in\fT_u(G)$ and $\tau'\in\fT(G)$ with $\tau\subseteq_c\tau'$, does it follow that $\tau'\in\fT_u(G)$?
\end{question}

If the answer to the question is `yes', then $\wbar{\fT}_u(G)$ is a sublattice of $\wbar{\fT}(G)$.
We note that this is already a quotient lattice.  Indeed, we observe that
$\ze(G^\vpi)$ is a quotient of $\ze(G^\fW)$.  If $p\in \ze(G^\vpi)$, we let
$S_p=(\vpi^\fW)^{-1}(\{p\})$, which by part (II) of the proof of Theroem \ref{theo:galois}, admits
a central idempotent, $e$, so $\vpi^\fW(e)=p$.  We identify $p=P(\tau)$ for some $\tau\in\wbar{\fT}_u(G)$,
and hence $e=E(\tau)$.

\subsection{Decompositions of Fourier-Stieltjes algebras}
We let the {\it Fourier-Stieltjes algebra} of a unitarizable group $(G,\tau_G)$ be given by
\[
\fsal(G)=\spn\pd(G).
\]
Thanks to polarization identity, for a continuous unitary representation $\pi:G\to\un(\fH)$ and $\xi,\eta$ in $\fH$
we have
\begin{equation}\label{eq:polarization}
4\langle \pi(\cdot)\xi,\eta\rangle=\sum_{k=0}^3 i^k\langle \pi(\cdot)(\xi+i^k\eta),\xi+i^k\eta\rangle
\end{equation}
and we see that $\fsal(G)$ consists of matrix coefficients of continuous unitary representations.
As with the case of a locally compact group, this is a subalgebra:  we use direct sums
and tensor products of continuous unitary representations, as in \cite{eymard}.
This is a Banach algebra, with a certain norm, as noted in \cite{laulu}.
Let us briefly indicate this, but with an alternative approach.  Many of the indicated results
are noted in \cite{laulu1}.

We let $\wstar(G)=\vpi(G)''$ denote the von Neumann algebra generated by
$\vpi(G)$ in $\fB(\fH_G)$, and $\wstar(G)\bar{\otimes}\wstar(G)$ its normal spatial tensor product.

\begin{theorem}\label{theo:fsal}
\begin{itemize}
\item[(i)] Each continuous unitary representation $\pi:G\to\un(\fH)$ admits a unique normal extension
$\pi'':\wstar(G)\to\pi(G)''$.
\item[(ii)] We have duality relationship $\fsal(G)^*=\wstar(G)$ with duality
\[
\langle u, T\rangle = \langle \pi''(T)\xi,\eta\rangle\text{ where }u=\langle\pi(\cdot)\xi,\eta\rangle.
\]
\item[(iii)] The map $\Gamma=(\vpi\otimes\vpi)'':\wstar(G)\to\wstar(G)\bar{\otimes}\wstar(G)$
admits preadjoint $\Gamma_*:\fsal(G)\hat{\otimes}\fsal(G)\to\fsal(G)$, which shows that
$\fsal(G)$, as the predual of $\wstar(G)$, is a Banach algebra of functions on $G$.
\item[(iv)]  Any closed translation-invariant subspace $\fal$ of $\fsal(G)$ is of the form
\[
\fal=Z\cdot \fsal(G)
\]
where $Z\in\zp(\wstar(G))$ is a central projection and 
\[
Z\cdot\langle\pi(\cdot)\xi,\eta\rangle
=\langle\pi(\cdot)Z\xi,\eta\rangle.
\]
\end{itemize}
\end{theorem}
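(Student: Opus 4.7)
The plan is to use the universal property of $\vpi$ as the structural engine, reducing all four parts to standard von Neumann algebra theory: (i) is the universal-property statement; (ii) is its duality reformulation; (iii) is (i) applied to the tensor product representation; and (iv) reduces to the classification of weak-$*$ closed two-sided ideals in a von Neumann algebra.

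For (i), I begin with the observation that every cyclic continuous unitary representation of $G$ is unitarily equivalent to some $\pi_u$ with $u \in \pd(G)$, and each such $\pi_u$ is an orthogonal summand of $\vpi$ by the very construction $\vpi = \bigoplus_{u\in\pd(G)}\pi_u$. Decomposing an arbitrary continuous $\pi\colon G \to \un(\fH)$ into cyclic subrepresentations shows that $\pi$ is unitarily equivalent to a subrepresentation of an amplification $\vpi^{(\kappa)}$; let $V\colon \fH \to \fH_G \otimes \ell^2(\kappa)$ be the intertwining isometry. Then $\pi''(T) := V^*(T \otimes I_\kappa)V$ defines a normal $*$-homomorphism from $\wstar(G)$ into $\fB(\fH)$ sending $\vpi(s)$ to $\pi(s)$; multiplicativity uses that $VV^*$ commutes with $\vpi(G)\otimes I_\kappa$ (since $V\fH$ is $\vpi\otimes I$-invariant) and hence, by double commutant, with $\wstar(G)\otimes I_\kappa$. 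The image lies in $\pi(G)''$ by normality and ultraweak density of $\vpi(G)$ in $\wstar(G)$; uniqueness follows from the same two facts. For (ii), if $u\in\fsal(G)$ has two presentations $u=\langle\pi_1(\cdot)\xi_1,\eta_1\rangle=\langle\pi_2(\cdot)\xi_2,\eta_2\rangle$, then the functionals $T\mapsto\langle\pi_k''(T)\xi_k,\eta_k\rangle$ agree on $\vpi(G)$ and are normal, hence agree on $\wstar(G)$, so the pairing is well-defined and yields a contraction $\fsal(G)\to\wstar(G)_*$. Surjectivity follows from the standard fact that normal functionals on a von Neumann algebra are vector functionals on a normal representation; composition with $\vpi$ produces an element of $\fsal(G)$ representing any given normal functional, and the resulting identification is isometric, making $\fsal(G)$ a Banach space with the dual norm.

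For (iii), the map $s \mapsto \vpi(s)\otimes\vpi(s)$ is a continuous unitary representation of $G$ on $\fH_G \otimes \fH_G$, whose image generates a von Neumann subalgebra of $\wstar(G)\bar{\otimes}\wstar(G)$, so (i) yields the normal $*$-homomorphism $\Gamma=(\vpi\otimes\vpi)''\colon\wstar(G)\to\wstar(G)\bar{\otimes}\wstar(G)$. Normal $*$-homomorphisms between von Neumann algebras are precisely the ultraweakly continuous ones and hence admit preadjoints; using (ii) together with the identification of the predual of a spatial tensor product with the projective tensor product of the preduals, this preadjoint is a contraction $\Gamma_*\colon\fsal(G)\hat{\otimes}\fsal(G)\to\fsal(G)$. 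Evaluating on pure tensors via the pairing of (ii),
\[
\Gamma_*(u\otimes v)(s)=\langle u\otimes v,\vpi(s)\otimes\vpi(s)\rangle=u(s)v(s),
\]
so $\Gamma_*$ implements pointwise multiplication and turns $\fsal(G)$ into a Banach algebra of functions.

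For (iv), dualising through (ii) shows that left (respectively right) translation by $s$ on $\fsal(G)$ corresponds to left multiplication by $\vpi(s^{-1})$ (respectively right multiplication by $\vpi(s)$) on $\wstar(G)$. So for a closed translation-invariant subspace $\fal$, the annihilator $\fal^\perp\subseteq\wstar(G)$ is weak-$*$ closed and invariant under left and right multiplication by every $\vpi(s)$; passing to spans and weak-$*$ closures, $\fal^\perp$ is a weak-$*$ closed two-sided ideal of $\wstar(G)$. Such ideals are in bijection with central projections via $Z\mapsto (I-Z)\wstar(G)$, giving a unique $Z\in\zp(\wstar(G))$ with $\fal=\{u:\langle u,(I-Z)T\rangle=0\text{ for all }T\}=Z\cdot\fsal(G)$. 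The identity $Z\cdot\langle\pi(\cdot)\xi,\eta\rangle=\langle\pi(\cdot)Z\xi,\eta\rangle$ then follows because $\pi''(Z)$ is central in $\pi(G)''$, hence commutes with every $\pi(s)$, and can be absorbed into the left-hand vector. The principal obstacle will be part (i): arranging the decomposition of an arbitrary $\pi$ so that $\pi''$ is demonstrably independent of the cyclic decomposition chosen and a genuine $*$-homomorphism into $\pi(G)''$, rather than merely into the ambient $\fB(\fH)$; once this is settled, (ii)--(iv) are essentially formal consequences.
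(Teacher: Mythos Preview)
Your proposal is correct and follows essentially the same route as the paper: (i) via cyclic decomposition and amplification of $\vpi$, (ii) by identifying $\fsal(G)$ with the normal functionals, (iii) by applying (i) to $\vpi\otimes\vpi$ and taking the preadjoint, and (iv) by passing to the annihilator as a weak-$*$ closed two-sided ideal. The only difference is that for (iv) the paper simply cites Arsac \cite[(3.17)]{arsac}, whereas you spell out the standard argument; your added detail about the intertwiner $V$ in (i) also makes explicit what the paper leaves as ``compressions, then amplifications.''
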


The symbol $\hat{\otimes}$, above, indicates the {\it operator projective tensor product}, of this
predual space, $\fsal(G)=\wstar(G)_*$, with itself.  See \cite{effrosr} for more on this.
Hence we conclude that $\fsal(G)$ is a {\it completely contractive  Banach algebra}, which implies that
it is a Banach algebra.

\begin{proof}
By construction $\vpi$ is the direct sum of all continuous cyclic unitary representations of $G$.
Hence (i) is a simple matter of observing that $\pi$ can be gotten from $\vpi$ by taking
compressions, then amplifications.  The action of $\fsal(G)$ on $\wstar(G)$ as functionals, indicated in (ii),
is one of normal functionals, and must be separating as $\spn\vpi(G)$ is weak operator dense
in $\wstar(G)$, hence weak*-dense.  Each normal functional is a linear combination of $4$
positive normal functionals, each which has the indicated form.

Part (iii) is an immediate application of (i) and (ii) and duality.  

Part (iv) is \cite[(3.17)]{arsac} (in the case that $G$ is locally compact).  The present case
can be achieved with the same proof.
\end{proof}

Now we get to our main application.

\begin{theorem}\label{theo:EdLGFS}
Let $\tau\in\wbar{\fT}_u(G)$, and
\[
\fsal^\tau(G)=P(\tau)\cdot \fsal(G)\text{ and }\ideal(\tau)=(I-P(\tau))\cdot\fsal(G).
\]
Then
\begin{align*}
&\fsal^\tau(G)=\{u\in\fsal(G):u\text{ is }\tau\text{-continuous}\} \\
&\ideal(\tau)=\{\langle\pi(\cdot)\xi,\eta\rangle:\pi\text{ admits no }\tau\text{-continuous subrepresentations}\}.
\end{align*}
Moreover we have a decomposition
\[
\fsal(G)=\fsal^\tau(G)\oplus_{\ell^1}\ideal(\tau)
\]
into a closed translation-invariant subalgebra and a closed translation invariant ideal.
\end{theorem}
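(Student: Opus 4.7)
The plan is to exploit the duality $\fsal(G) = \wstar(G)_*$ from Theorem~\ref{theo:fsal}(ii) together with the \emph{centrality} of $P(\tau)$, which is the crucial input. First I would observe that since $P(\tau)$ is central,
\[
\wstar(G) = P(\tau)\wstar(G) \oplus (I-P(\tau))\wstar(G)
\]
is a $w^*$-direct sum of von Neumann algebras; passing to preduals, the standard $\ell^1$-splitting of the predual under a central projection yields the Banach-space decomposition $\fsal(G) = \fsal^\tau(G) \oplus_{\ell^1} \ideal(\tau)$.

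Next I would identify the two summands. For $\fsal^\tau(G) = P(\tau)\cdot\fsal(G)$, Corollary~\ref{cor:galoisU1} gives the natural isomorphism $P(\tau)\wstar(G) \cong \wstar(G_\tau)$ implemented by the normal extension of $\vpi_\tau$; dualizing identifies $\fsal^\tau(G)$ with $\fsal(G_\tau)\circ\eta_\tau$, which is manifestly the subspace of $\tau$-continuous matrix coefficients in $\fsal(G)$. For $\ideal(\tau)$: a continuous unitary representation $\pi$ admits no $\tau$-continuous subrepresentation if and only if $\pi''(P(\tau)) = 0$, since the range of $\pi''(P(\tau))$ is the largest $\tau$-continuous $\pi$-invariant subspace. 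When this vanishes, every matrix coefficient $u = \langle\pi(\cdot)\xi,\eta\rangle$ satisfies $P(\tau)\cdot u = \langle\pi(\cdot)\pi''(P(\tau))\xi,\eta\rangle = 0$, so $u \in \ideal(\tau)$; conversely, any $u \in \ideal(\tau)$ is realized on the compressed subrepresentation carried by $(I-\pi''(P(\tau)))\fH_\pi$, which has no $\tau$-continuous subrepresentation.

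Translation invariance of both summands is automatic from Theorem~\ref{theo:fsal}(iv), since each has the form $Z\cdot\fsal(G)$ for a central projection $Z$. The subalgebra structure of $\fsal^\tau(G)$ follows immediately from the identification $\fsal^\tau(G) \cong \fsal(G_\tau)\circ\eta_\tau$: pointwise products of $\tau$-continuous functions are $\tau$-continuous, and $\fsal(G_\tau)$ is already a Banach algebra by Theorem~\ref{theo:fsal}(iii).

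The principal technical obstacle is verifying that $\ideal(\tau)$ is a Banach-algebra ideal, i.e.\ $\fsal(G)\cdot\ideal(\tau) \subseteq \ideal(\tau)$. Dualizing via the comultiplication $\Gamma = (\vpi\otimes\vpi)''$ from Theorem~\ref{theo:fsal}(iii), the requirement $P(\tau)\cdot(uv) = 0$ translates to the vanishing of $\langle u\otimes v, \Gamma(T)\Gamma(P(\tau))\rangle$ for every $T \in \wstar(G)$. The cleanest route is to establish the central-projection inequality $\Gamma(P(\tau)) \leq I\otimes P(\tau)$ (equivalently $\leq P(\tau)\otimes I$) in $\wstar(G)\bar\otimes\wstar(G)$; this forces the pairing to factor through $I\otimes P(\tau)$, which annihilates $u\otimes v$ whenever $v \in \ideal(\tau)$. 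Verifying this inequality is the hard step: it encodes the assertion that tensoring any continuous unitary representation with one having no $\tau$-continuous subrepresentation produces no $\tau$-continuous subrepresentation. I would approach it by realizing $\Gamma(P(\tau))$ as $(\vpi\otimes\vpi)''(P(\tau))$ and exploiting the functoriality of the central-idempotent construction of Theorem~\ref{theo:galoisU} through the diagonal embedding $G \to G\times G$, so that the $\tau$-continuous part of $\vpi\otimes\vpi$ is matched against that of $\vpi_\tau\otimes\vpi_\tau$ via the cocompact structure of $\tau \in \wbar{\fT}_u(G)$.
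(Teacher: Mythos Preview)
Your approach via the predual splitting $\wstar(G)_* = P(\tau)\wstar(G)_* \oplus_{\ell^1} (I-P(\tau))\wstar(G)_*$ is correct and gives the $\ell^1$-direct sum at once; your identifications of the two summands are also sound. Where you diverge from the paper is exactly at your ``principal technical obstacle'', showing that $\ideal(\tau)$ is an ideal.

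The paper bypasses this completely by passing to the \emph{Eberlein algebra} $\fE(G)=\wbar{\fsal(G)}^{\|\cdot\|_\infty}$, a commutative C*-algebra with Gelfand spectrum $G^\vpi$. There the analogue of Theorem~\ref{theo:main} (with Corollary~\ref{cor:galoisU1} replacing Corollary~\ref{cor:galois2}) yields an E-dL-G decomposition $\fE(G)=\fE^\tau(G)\oplus\fI_\fE(\tau)$, where $\fI_\fE(\tau)$ is the ideal of functions vanishing on $P(\tau)G^\vpi$. Since multiplication in $\fsal(G)$ is pointwise and $\fsal(G)\subseteq\fE(G)$, the intersection $\ideal(\tau)=\fsal(G)\cap\fI_\fE(\tau)$ is automatically an ideal in $\fsal(G)$: products stay in $\fsal(G)$ because it is an algebra, and anything vanishing on $P(\tau)G^\vpi$ keeps that property under pointwise multiplication. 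The only point to check is that the projection $u\mapsto u\cdot P(\tau)$ onto $\fE^\tau(G)$ preserves $\fsal(G)$; this holds because left translation $s\cdot u$ coincides with the predual action $\vpi(s)\cdot u$, and $P(\tau)$ is a weak-operator limit of such $\vpi(s)$. Thus the ideal property costs nothing.

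Your proposed inequality $\Gamma(P(\tau))\leq I\otimes P(\tau)$ is in fact true and would complete your argument, but the sketch via ``functoriality through the diagonal embedding'' is vague. A direct proof is shorter: writing $P(\tau)=wo\text{-}\lim_i\vpi(s_i)$, one has that $(I\otimes(I-P(\tau)))\Gamma(P(\tau))$ is the weak-operator limit of $\vpi(s_i)\otimes(I-P(\tau))\vpi(s_i)$, whose second factor tends weakly to $(I-P(\tau))P(\tau)=0$ with both factors uniformly bounded, forcing the limit to vanish. So your route is viable, but the paper's C*-algebraic detour is more economical because it simply reuses the E-dL-G machinery of Theorem~\ref{theo:main} rather than re-proving the ideal property from the coproduct.
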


\begin{proof}
Let $\fE(G)=\wbar{\fsal(G)}^{\|\cdot\|_\infty}$, the uniform closure of $\fsal(G)$.
We call this the {\it Eberlein algebra} of $G$.
Then  $G^\vpi$ is the Gelfand spectrum of $\fE(G)$;
see, for example, \cite[Theo.\ 2.4]{spronks} or \cite[Theo.\ 3.12]{megrelishvili3}.  Analogously to Theorem \ref{theo:main},
we get a E-dL-G decomposition
\[
\fE(G)=\fE^\tau(G)\oplus\fI_\fE(\tau).
\]
Using Corollary \ref{cor:galoisU1} in place of Corollary \ref{cor:galois2} we see that
$\fE^\tau(G)=\fE(G)\cdot P(\tau)$ is the algebra of $\tau$-continuous Eberlein functions,
and $\fI_\fE(\tau)$ those Eberlein functions vanishing on $P(\tau)G^\vpi$.

We let $\fsal^\tau(G)=\fsal(G)\cap \fE^\tau(G)$.  We note that for $s$ in $G$, that
left translation and predual action by $\vpi(s)$ on $u$ in $\fsal(G)$ coincide:
$s\cdot u=\varpi(s)\cdot u$.  Hence letting $P(\tau)=wo\text{-}\lim_i\vpi(s_i)$ we see that
$P(\tau)\cdot u=u\cdot P(\tau)$ may be regarded, as an element of $\fE(G)$, in a sense analagous
to the action of $G^\fW$ on $\fW(G)$.  The formulas describing $\fsal^\tau(G)$ and $\ideal(\tau)$
follow immediately.

That the direct sum of the last formula is an $\ell^1$-direct sum follows from \cite[(3.9)]{arsac}.
\end{proof}

\begin{remark} Let $\Delta=\Delta(\fsal(G))\subset \wstar(G)$ denote the Gelfand spectrum of $\fsal(G)$.
We have that $G^\vpi\subseteq\Delta$.  Proper inclusion may hold, as is shown by the Wiener-Pitt phenomenon
on classical abelian groups.  If there were $Z\in\zp(\wstar(G))\cap\Delta\setminus G^\vpi$, then
$u\mapsto Z\cdot u$ is a homomorphism on $\fsal(G)$, with closed range, and hence
it kernel $(I-Z)\cdot\fsal(G)$ is an ideal.  We do not know of an alternative description of such ideals,
or even if such $Z$ exist.
\end{remark}

\section{Comparison with other classes of topologies}\label{sec:comparison}

We point to some theorems and examples in the literature, showing the scope of weakly almost periodic topologies.
In this section $(G,\tau_G)$ will be a Hausdorff topological group.
We note the following inclusion of families topologies which are defined in this paper:
\begin{equation}\label{eq:inclusions}
\fT_{lc}(G)\subseteq\fT_u(G)\subseteq\fT(G).
\end{equation}
To see the first inclusion, we have for $\tau$ in $\fT_{lc}(G)$ that
$\tau=\sig(G,\fC_0^\tau(G))=\sig(G,\{\lambda_\tau\circ\eta_\tau\})$, where
$\lambda_\tau:G_\tau\to\un(\bl^2(G_\tau))$ is the left regular representation of the locally compact
completion.

\subsection{Linear representability}  It is shown in \cite{teleman} that there is a weakly continuous representation
$\pi:G\to\mathrm{Is}(\fX)$  (isometries on $\fX$ with weak operator topology) with $\tau_G=\sig(G,\{\pi\})$.

\subsection{Reflexive representability}  {\it $(G,\tau_G)$ is weakly almost periodic if and only if
there is a reflexive Banach space $\fX$ and a weak operator continuous representation $\pi:G\to\mathrm{Is}(\fX)$ 
for which $\tau_G=\sig(G,\{(\pi,wo)\})$}, i.e.\ the topology on $G$ is recoverable from the weak operator
topology on $\mathrm{Is}(\fX)$.   This important result is in \cite{shtern}, and is improved 
in \cite{megrelishvili}, where it is further shown that $\tau_G=\sig(G,\{(\pi,so)\})$, i.e.\ strong 
and weak operator topologies must coincide on $\pi(G)$.  Compare with \cite[Theo.\ 2.8]{deleeuwg1}, as
mentioned in Remark \ref{rem:dlg}.

\subsection{Some non-weakly-almost-periodic groups}  \label{ssec:notwap}
{\bf (i)} The group $G=H_+[0,1]$ of orientation preserving homeomorphisms,
with compact-open topology,
has that $\fW(G)=\Cee 1$, as shown by \cite{megrelishvili1}.  Hence $\fT(G)=\bigl\{\{\varnothing,G\}\bigr\}$.

{\bf (ii)} The (real) Banach space $c_0$, with its norm topology
has that $\eps^\fW:c_0\to c_0^\fW$ is not a homeomorphism onto its range, as shown by \cite{ferrig}.  
This Banach space and other non-weakly-almost-periodic Banach spaces are indicated in \cite{benyaacovbf}.

\subsection{Some weakly almost periodic but non-unitarizable groups}
{\bf (i)} For $n$ in $\En$, the (real) Banach space $G_{2n}=\bl^{2n}(\mu)$
is shown to be reflexively representable, hence weakly almost periodic, in \cite{megrelishvili2}.
For $n\geq 2$, $G_{2n}$ is not unitarizable, as noted below.

{\bf (ii)} Following work of \cite{banaszczyk} and (i), above, it is shown in
\cite{glasnerw} that there is a discrete subgroup $\Gamma$ of the real Banach space
$\ell^4$, for which $\ell^1/\Gamma$ is weakly almost periodic, non-unitarizable, and monothetic.
Hence $\fT_u(\Zee)\subsetneq\fT(\Zee)$.

\subsection{Some non-locally compact unitarizable groups}\label{ssec:uspenskeij}
{\bf (i)}
Given a measure space $(X,\mu)$ the (real) Banach space $G_p=\bl^p(\mu)$ with its norm topology is unitarizable
exactly when $1\leq p\leq 2$.  This is the work of many authors, and the result put
together in \cite[Theo.\ 2.2]{galindo}.  The proof of sufficiency is curiously simple.  
We see that $\tau_{G_p}=\sig(G_p,\{e^{-{\|\bullet-v\|_p}^2}:v\in G_p\})$, where each
$e^{-{\|\bullet-v\|_p}^2}\in\pd(G_p)$, by an old result of Schoenberg.

{\bf (ii)}
For any completely regular Hausdorff space $X$, the universal (real) locally convex space
$G=L(X)$ generated by $X$ is unitarizable, and hence so too is the universal free abelian group 
$A(X)$ as is shown in \cite{uspenskeij}.
In the course of the proof, he also notes that the Banach space $\ell^1$ is unitarizable.

\subsection{On quotients of weakly almost periodic and unitarizable groups}
Combining resuls \ref{ssec:notwap} (ii) and \ref{ssec:uspenskeij} gives rise to
an interesting conculsion, noted in \cite{ferrig}.
Let $Q:\ell^1\to c_0$ be a linear quotient map, and $K=\ker Q$.  Then the Banach space
$\ell^1$ is unitarizable while $c_0=\ell^1/K$ is not.  In particular, in the notation 
of Remark \ref{rem:laul} (ii),  $\tau_{\ell^1;K}$ is not the quotient topology on $\ell^1/K$.

In particular, {\it the quotient of a unitarizable group need not be weakly almost periodic, in particular
need not be unitarizable.}

The author is extremely grateful to J. Galindo for indicating this example.

\subsection{Proper inclusions of families of topologies}  Let $G=\Zee^{\oplus\En}$, which
is the free abelian group of countably infinite rank.  If $\fX$ is a real Banach space with
dense spanning sequence $\{\xi_1,\xi_2,\dots\}$  (e.g.\ a Schauder basis), we define
$\eta_\fX:G\to\fX$ by
\[
\eta_\fX\left((n_j)_{j\in\En}\right)=\sum_{j=1}^\infty (n_{2j-1}\sqrt{2}+n_{2j})\xi_j
\]
and $\tau^\fX=\sig(G,\{\eta_\fX\})$.  It is a well-known fact that $\Zee\sqrt{2}+\Zee$ is dense
in $\Ree$, from which it follows that $(G,\tau^\fX)$ admits completion $G_{\tau^\fX}=\fX$.
This completion is, in fact, the metrical completion of $\eta_\fX(G)$ in the norm of $\fX$.
The examples $\tau_G$ (discrete topology), $\tau^{\ell^1}$, $\tau^{\bl^4(\mu)}$ ($\mu$ is Lebesgue
measure on $[0,1]$, say) and $\tau^{c_0}$ show that
\[
\fT_{lc}(G)\subsetneq\fT_u(G)\subsetneq\fT(G)\subsetneq\{\text{all group topologies on }G\}.
\]

We let $\what{\fT}_{lc}(G)$ be as given in (\ref{eq:hatTlc}).  Let, as above, $\fX$ be a separable
Banach space with fixed map $\eta_\fX$ and topology $\tau^\fX$ as above, and assume that $\tau^\fX\in\fT_u(G)$.
We shall see that 
\begin{equation}\label{eq:nonincl}
\bar{\tau}^\fX=T_u\circ P(\tau^\fX)\in\wbar{\fT}_u(G)\setminus \what{\fT}_{lc}(G).
\end{equation}
We first note that $\fX$ is an almost periodic group, since $\{e^{-if(\bullet)}:f\in\fX^*\}$ separates points.
Hence, as observed in \cite{ilies1}, $\bar{\tau}^\fX=\tau^\fX\vee\tau_{ap}$.
We let $\fS=\{\tau\in\wbar{\fT}_{lc}(G):\tau\subseteq\bar{\tau}^\fX\}$.  Hence $\bigvee\fS\in\what{\fT}_{lc}(G)$,
and is the maximal such element dominated by $\bar{\tau}^\fX$.  We then
consider the map $\eta^{\bar{\tau}^\fX}_{\bigvee\fS}:G_{\bar{\tau}^\fX}\to G_{\bigvee\fS}$, and 
let $\sig=\sig(G_{\bar{\tau}^\fX},\{\eta^{\bar{\tau}^\fX}_{\bigvee\fS}\})$.  Since $G_{\bar{\tau}^\fX}$ is 
an extension of $\fX$ be a compact group, we see for each $\tau$ in $\fS$ that $G_\tau$,
having dense subgroup $\eta^{\bar{\tau}^\fX}_\tau(G_{\bar{\tau}^\fX})$, is almost connected,
hence compactly generated.  Thus we see that $G_\tau=\Ree^{n_\tau}\times T$ where $n_\tau\in\En$
and $T$ is compact, and we have that $\tau=\tau_{n_\tau}\vee\tau_{ap}$, where $\tau_{n_\tau}$ is 
induced by the map from $G_{\bar{\tau}^\fX}$ into $\Ree^{n_\tau}$.  
For a net $(\xi_i)$  and a point $\xi_0$ in $G_{\bar{\tau}^\fX}$ we have
\begin{align*}
\sig\text{-}\lim_i \xi_i=\xi_0\;\Leftrightarrow\; 
&\lim_i\eta^{\bar{\tau}^\fX}_\tau(\xi_i)=\eta^{\bar{\tau}^\fX}_\tau(\xi_0) \text{ in }G_\tau\text{ for each }
\tau\text{ in }\fS \\
\Leftrightarrow\;&\lim_i\eta^{\bar{\tau}^\fX}_{\tau_{n_\tau}}(\xi_i)=\eta^{\bar{\tau}^\fX}_{\tau_{n_\tau}}(\xi_0)
\text{ in }\Ree^{n_\tau}
\text{ for each }\tau\text{ in }\fS \\
&\text{and }\lim_i\eta^{\bar{\tau}^\fX}_{\tau_{ap}}(\xi_i)=\eta^{\bar{\tau}^\fX}_{\tau_{ap}}(\xi_0)\text{ in }G^{ap}.
\end{align*}
On finite dimensional vector spaces, weak and norm topologies coincide.
Hence we may consider a net in $\fX$ which is weakly converging but with no norm
converging subnets.  For example, let $\fF$ denote the set of all finite subsets of $\fX^*$,
and direct $\fF\times\En$ by $(F,n)\leq(F',n')$ if $F\subseteq F'$ and $n\leq n'$; and
let $\xi_{(F,n)}$ be in $\bigcap_{f\in F}\ker f$ of norm $n$.
If we let $(\xi_i)$ be subnet of a lift of such a net, which converges as above,
then the net $(\eta^{\bar{\tau}^\fX}_{\tau^\fX}(\xi_i))$
cannot converge in the norm topology of $\fX$, hence $(\xi_i)$ cannot converge in $G_{\bar{\tau}^\fX}$.  
If follows that $\bar{\tau}^\fX\subsetneq\bigvee\fS$, and hence
(\ref{eq:nonincl}) holds.

It is now straightforward to conclude that
\[
\wbar{\fT}_{lc}(G)\subsetneq\what{\fT}_{lc}(G)
\subsetneq\wbar{\fT}_u(G)\subsetneq\wbar{\fT}(G).
\]




\end{document}